\definecolor{Red}{cmyk}{0,1,1,0.2}
\newcommand{\Q}{\mathbb Q}
\newcommand{\R}{\mathbb R}
\def\R{\mathbb R}
\def\E{\mathbb E}
\def\P{\mathbb P}
\def\ind{\mathbf{1}_}
\newcommand{\be}{\begin{equation}}
\newcommand{\ee}{\end{equation}}
\def\1{{\bf 1}}
\def\id{{\rm id}}
\def\ds{\displaystyle}
\newcommand{\pare}[1]{\left (#1\right )}
\newcommand{\croc}[1]{\left [#1\right ]}
\newtheorem{Theorem}{Theorem}[section]
\newtheorem{Definition}[Theorem]{Definition}
\newtheorem{Proposition}[Theorem]{Proposition}
\newtheorem{Lemma}[Theorem]{Lemma}
\newtheorem{Corollary}[Theorem]{Corollary}
\newtheorem{Remark}[Theorem]{Remark}
\newtheorem{Sketch of proof}[Theorem]{Sketch of proof}
\begin{document}
\title[Spider diffusion with spinning measure selected from its own local time]{On spider diffusions having a spinning measure selected from their own local time}
\author[Miguel Martinez \& Isaac Ohavi]{Miguel Martinez$^{\dagger}$, Isaac Ohavi$^{\star}$\\
{$^\dagger$ Universit\'e Gustave Eiffel, LAMA UMR 8050, France}\\{$^\star$ Hebrew University of
Jerusalem, Department of Mathematics and Statistics, Israël}}
\email{miguel.martinez@univ-eiffel.fr}
\email{isaac.ohavi@mail.huji.ac.il \& isaac.ohavi@gmail.com}
\thanks{Research partially granted by the Labex Bézout\\Research partially supported by the GIF grant 1489-304.6/2019}
\dedicatory{Version: \today}
\maketitle
\begin{abstract}
The aim of this article is to give several results related to Walsh's spider diffusions living on a star-shaped network that have a spinning measure selected from the own local time of the motion at the vertex (cf.\cite{Martinez-Ohavi Walsh}). We prove the corresponding It\^o's formula and give some global trajectory properties such as $L^1$-approximation of the local time and the Markov property.
Regarding the behavior of the process at the vertex, we show that that the distribution of the process is non atomic at the junction point and we characterize the instantaneous scattering distribution along some ray with the aid of the probability coefficients of diffraction. We obtain also a Feynmann-Kac representation for linear parabolic systems posed on star-shaped networks that where introduced in \cite{Martinez-Ohavi EDP} possessing a so-called {\it local-time Kirchhoff's boundary condition}.
\end{abstract}

\section{Introduction}\label{sec intro compa}

Walsh spider diffusion processes are currently being thoroughly studied and extended to various settings. Let us mention the following recent articles amongst the vast literature on the subject: in \cite{Ichiba} the authors propose the construction of stochastic integral equations related to Walsh semimartingales, in \cite{Ichiba-2} the authors compute the possible stationary distributions, in \cite{Karatzas-Yan} the authors investigate stopping control problems involving Walsh semimartingales, in \cite{Atar} the authors study related queuing networks, whereas \cite{Bayraktar} addresses the problem of finding related stopping distributions. We refer also to the introduction of \cite{Lejay} for a comprehensive survey on Skew Brownian motion, the reader may also find therein many older references on the subject.
\smallskip
Although difficult, several constructions of Walsh's diffusions have been proposed in the literature, see  for e.g. \cite{Barlow-Pitman-Yor} for a construction based on Feller’s semigroup theory, \cite{Salisbury} for a construction using the excursion theory for right processes, and also the very recent preprint \cite{Bayraktar-2} that proposes a new construction of Walsh diffusions using time changes of multi-parameter processes. 

Recall once again that in all these constructions, the spinning measure of the process -- that is strongly related somehow to {\it 'the way of selecting infinitesimally the different branches from the junction vertex'} -- remains constant through time. 

\medskip
\medskip

In our previous contribution \cite{Martinez-Ohavi Walsh} that should be regarded as the companion paper of this article, we have proved existence and uniqueness in the weak sense of a Walsh's spider process whose spinning measure and coefficients are allowed to depend on the local time at the junction vertex. 

Let us first briefly recall the methodology and the main lines that lead to the principal result obtained in \cite{Martinez-Ohavi Walsh} (Theorem 3.1).

Given $I$ a positive integer ($I\geq 2$), we define the star-shaped network $\mathcal{J}$ as:
\begin{align*}
\mathcal{J}~~:=~~\{{\bf 0}\}\cup \pare{(0,\infty)\times [I]}, ~~\text{with}~~[I] := \{1,\dots, I\},
\end{align*} 
where ${\bf 0} = \{(0,j), j\in [I]\}$ is the junction vertex equivalence class. We are given also $I$ pairs $(\sigma_{i},b_{i})_{i\in [I]}$ of mild coefficients from $[0,+\infty)$ to $\R$ satisfying the following condition of ellipticity: $\forall i\in [I],~\sigma_i>0$. Finally let $\big(\alpha_1,\ldots,\alpha_I)$ positive constants satisfying $\displaystyle \sum_{i=1}^I \alpha_i=1$, corresponding to the probability coefficients of diffraction of the spider along some ray. It was proved in \cite{Freidlin-Wentzell-2} that there exists a continuous Feller Markov process $\big(x(\cdot),i(\cdot)\big)$ valued in ${\mathcal{J}}$, such that the process $\big(x(\cdot)\big)$ satisfies the following stochastic differential equality:
\begin{equation*}
dx(t)= b_{i(t)}(x(t))dt + \sigma_{i(t)}(x(t))dW(t)+d\ell(t) \;,~~0\leq t\leq T.
\end{equation*}
In the above equality, the process $\ell(\cdot)$ is the local time of the process $\big(x(\cdot)\big)$ at the vertex $\bf 0$. Finally recall that the following It\^{o}'s formula was also proved in \cite{freidlinS}: 
\begin{eqnarray}\label{Ito Sheu}
\nonumber&\displaystyle df_{i(t)}(x(t))~~=~~ \Big(b_{i(t)}(x(t))\partial_xf_{i(t)}(x(t))+ \frac{1}{2}\sigma_{i(t)}^2(x(t))\partial_{x}^2f_{i(t)}(x(t))\Big)dt+\\
&
\displaystyle \partial_xf_{i(t)}(x(t))\sigma_{i(t)}(x(t))dW(t) 
+ \sum_{i=1}^{I}\alpha_{i}\partial_xf_i(0)d\ell(t),~~\P-\text{a.s,}
\end{eqnarray}
for any sufficiently regular $f$.

In the companion paper \cite{Martinez-Ohavi Walsh} of this contribution, the objective was to extend the above mentioned existence and uniqueness results obtained for spider motions in \cite{Freidlin-Wentzell-2}, by allowing now all coefficients of the process -- including the spinning measure -- to depend both on the own local time of the process spent at the junction together with the current running time. Therein, we took naturally the results stated in \cite{Freidlin-Wentzell-2} as our starting building block and we constructed 'by hand' a solution of a martingale problem that was purposely designed to take the presence of the local time in all the leading coefficients into account. More precisely, in \cite{Martinez-Ohavi Walsh} we built the spider process using a careful adaptation of the seminal construction for solutions of classical martingale problems that have $\R^d$ as the underlying state space, combining concatenation of probability measures and a tension argument (for more details regarding the choice of this methodology, we refer the reader to the Introduction in \cite{Martinez-Ohavi Walsh}). For the uniqueness part, the proof was achieved using a PDE argument that relates to the advances contained in \cite{Martinez-Ohavi EDP}, which deals with the well-posedness of parabolic systems posed on graphs, having a so-called {\it local-time Kirchhoff's boundary condition} designed for our purposes. Up to our knowledge \cite{Martinez-Ohavi Walsh} is the first result for the existence of a Walsh spider process possessing a non-constant spinning measure.

Before detailing the different main lines and results of this contribution, let us briefly explain why we believe that the addition of local time in the diffraction coefficients is both stimulating from a theoretical and practical point of view (see also the Introductions (sub sections: general motivations) in \cite{Martinez-Ohavi Walsh} and \cite{Ohavivisco}).

From a theoretical point of view, it appears first that the dependency of the local time in the coefficients of diffraction will lead to some novelty in the field of stochastic scattering control theory. It is expected that this dependency will allow to better understand how the diffraction of the spider is acting, especially regarding the behavior of the second order terms near $\bf 0$. In a related context of non linear PDE, I.Ohavi managed very recently in \cite{Ohavivisco} to obtain a comparison theorem (thus uniqueness) for continuous viscosity solution to some kind of Walsh’s spider Hamilton-Jacobi-Bellman system that possesses a new type of boundary condition at the vertex $\bf 0$ involving a non linear local time Kirchhoff ’s transmission (see \cite{Ohavivisco}). Let us emphasize that in \cite{Ohavivisco} the introduction of an external deterministic ’local-time’ variable $l$ -- that is the counterpart to the local time $\ell$ -- is one of the crucial ingredients to obtain the comparison principle. Note that even without the presence of the external variable $l$ in the original HJB problem, the 'artificial' introduction of this external variable in the problem allowed to extend the main results contained in \cite{Lions Souganidis 1} and \cite{Lions Souganidis 2} to the fully non linear and non degenerate framework.

From a practical point of view, let us imagine for instance a punctual source of light that crosses a plane at some point $O$. In this case, one can imagine that we constrain a Brownian particle to move along a finite number of rays, with different magnetic and electronic properties, that are joined on a 'spider web' whose central vertex lies at $\bf 0$.  When passing at the vertex junction, the Brownian particle gets directly hit by the punctual source light and this modifies its electronic properties. Thus, the particle is instantaneously attracted in a more privileged manner towards some particular rays of the spider web and these change according to its modified electron affinity received instantly from the punctual light source. Since the particle gets directly affected by the time it has spent under the light {\it i.e.} the 'local time' spent by the Brownian particle under the light source of the vertex, this 'local time' has a direct influence on the privilege instantaneous directions elected by the particle ({\it i.e.} the spinning measure of the motion). In turn, such a device would give information on the light scattering of a punctual light source by a single particle. Note that such a device could not be set up by using a classical planar Brownian motion particle : because the trajectories do not have bounded variations it does not seem feasible to pursue the particle with a point laser and such a motion would never return exactly under a fixed punctual light source point. The reader will surely be interested to find in \cite{Clark} what seems to us to be the origins of the study of the relationships between light scattering and Brownian particle motions.

\medskip
\medskip

Let us recall the statement of the main Theorem in \cite{Martinez-Ohavi Walsh} (Theorem 3.1). 

Given
$$(x_\star,i_\star)\in \mathcal{J},~~T>0,$$
then there exists then a unique probability measure denoted by $\P^{x_\star,i_\star}$, defined on the canonical space of continuous maps living on the star-shaped network - times the set of the non negative and non decreasing function - such that for $f$ any regular enough:
$$\big(\mathcal{S}_{pi}-\mathcal{M}_{ar}\big)~~-~~\text{label for the spider martingale problem}:$$
\begin{align} \label{eq:def-V-intro}
&\nonumber\Bigg(f_{i(s)}(s,x(s))- f_{i_\star}(0,x_\star)-\int_{0}^{s}\Big(\partial_tf_{i(u)}(u,x(u))+\displaystyle\frac{1}{2}\sigma_{i(u)}^2(u,x(u),l(u))\partial_{xx}^2f_{i(u)}(u,x(u))\\
&\nonumber\displaystyle+b_{i(u)}(u,x(u),l(u))\partial_xf_{i(u)}(u,x(u))\Big)du - \int_{0}^{s}\big(\displaystyle\sum_{j=1}^{I}\alpha_j(u,l(u))\partial_{x}f_{j}(u,0)\big)dl(u)~\Bigg)_{0\leq s\leq T},
\end{align}
is a martingale under the probability measure $\P^{x_\star,i_\star}$ for the natural filtration generated by the canonical process $(x(s),i(s),l(s))_{s\in [0,T]}$. Here, even if the construction of the process was performed in a more general probability space, classical arguments taken from the Skohokhod's representation of a reflected diffusion ensure that $(l(s))_{s\in [0,T]}$ would still stand for the local time of the expected spider process at the junction point $\bf 0$.

\medskip
\medskip

As we have described previously, the results of this article mainly concern the trajectory properties of the spider built in \cite{Martinez-Ohavi Walsh}. Each of the results stated and proved in this contribution, can be seen {\it mutatis mutandis} as several independent short problems denoted by \textbf{Problem 1 to 6} in the sequel. We introduce them briefly in the next lines of this Introduction, so that the reader may easily read and find his way through this contribution.
\subsection{Problem 1 - It\^o's formula} Using the test function $f:=(x,i)\mapsto x$, we proved in Proposition 5.1 in \cite{Martinez-Ohavi Walsh}, that there exists a $(\Psi_{s})_{0\leq s\leq T}$ standard one dimensional Brownian motion $W$ (depending on $f$), such that for the unique solution $\P^{x_\star,i_\star}$ of $\big(\mathcal{S}_{pi}-\mathcal{M}_{ar}\big)$, we have almost surely $\forall s\in[0,T]$:  
\begin{eqnarray}\label{eq : diff x}
x(s)=x_\star+\displaystyle\int_{0}^{s}b_{i(u)}(u,x(u),l(u))du+\displaystyle\int_{0}^{s}\sigma_{i(u)}(u,x(u),l(u))dW(u)+l(s).
\end{eqnarray}
In the spirit of the It\^o's formula given in \eqref{Ito Sheu}, where the spinning measure is constant, could we also obtain a general It\^o's formula driven by the same Brownian motion $W$? 
\subsection{Problem 2 - Absolute continuity}
The crucial {\it non-stickiness} property of the spider process $(x,i)$ at $\bf 0$, proved in Proposition 5.2 in \cite{Martinez-Ohavi Walsh}, reads:
\begin{eqnarray*}\forall \varepsilon>0,~~\mathbb{E}^{\P^{x_\star,i_\star}}\croc{\int_{0}^{T}\ind{x(u)\leq \varepsilon}ds}\leq C\varepsilon,
\end{eqnarray*}
where $C>0$ is a constant independent of $\varepsilon$. Consequently we have that:
\begin{eqnarray*}
\P^{x_\star,i_\star}\big(x(s)=0\big)=0,~~ds~~\text{a.e in}~~[0,T].  
\end{eqnarray*}
Can we claim that indeed we have:
\begin{eqnarray*}
\forall s\in [0,T],~~\P^{x_\star,i_\star}\big(x(s)=0\big)=0~~?
\end{eqnarray*}
even if the coefficients of diffusion $(\sigma_i,b_i)$ are discontinuous at $\bf 0$, and their discontinuities are driven almost surely by the process $i$, that clearly has a chaotic trajectory with infinite discontinuities. Since the process $x$ has a classical behavior at the interior of each edges, with a density absolutely continuous w.r.t the Lebesgue measure, does this absolute continuity can be extended on the whole star-shaped network ${\mathcal J}$?
\subsection{Problem 3 - Feynman-Kac representation} We have recently proved in our last PDE article \cite{Martinez-Ohavi EDP}, that the following linear parabolic system posed on a star-shaped network:
\begin{eqnarray}\label{eq : pde with l}
\begin{cases}
\textbf{Linear parabolic equation parameterized}\\
\textbf{by the local-time on each ray:}\\
\partial_tu_i(t,x,l)-\sigma_i(t,x,l)\partial_x^2u_i(t,x,l)
+b_i(t,x,l)\partial_xu_i(t,x,l)\\
+c_i(t,x,l)u_i(t,x,l)=f_i(t,x,l),~~(t,x,l)\in (0,T)\times (0,R)\times(0,K),\\
\textbf{Linear local-time Kirchhoff's boundary condition at }\bf 0:\\
\partial_lu(t,0,l)+\displaystyle \sum_{i=1}^I \alpha_i(t,l)\partial_xu_i(t,0,l)=\phi(t,l),~~(t,l)\in(0,T)\times(0,K),\\
\medskip
\textbf{Dirichlet/Neumann boundary conditions outside }  \bf 0:\\
\partial_xu_i(t,R,l)=0,~~ (t,l)\in (0,T)\times(0,K),\\
\forall i\in[\![1,I]\!],~~ u_i(t,x,K)=\psi_i(t,x),~~ (t,x)\in [0,T]\times[0,R],\\
\textbf{Initial condition:}\\
\forall i\in[\![1,I]\!],~~ u_i(0,x,l)=g_i(x,l),~~(x,l)\in[0,R]\times[0,K],\\
\textbf{Continuity condition at } \bf 0:\\
\forall (i,j)\in[\![1,I]\!]^2,~~u_i(t,0,l)=u_j(t,0,l)=u(t,0,l),~~(t,l)\in|0,T]\times[0,K],
\end{cases}
\end{eqnarray}
is well posed in a certain class of regularity, that not ensures completely the continuity of $l \mapsto \partial_lf(t,x,l)$ in the whole domain, but only at $\bf 0$ (see Definition 2.1 in \cite{Martinez-Ohavi EDP}). Does the solutions of \eqref{eq : pde with l}, in the backward formulation, have a Feynman-Kac representation like in the classical cases?
\subsection{Problem 4 - Approximations of the local time at the junction vertex} A local-time of a one dimensional reflected diffusion on the half line has two classical approximations in literature. The first one is called the {\it the "Downcrossing representation of the local time"} ; the second one is an $L^1$ approximation where both the second order term and the average time spent by the process $x$ near $\bf 0$ appear. Do these two representations retain their validity in the case of our spider process $(x,i)$?
\subsection{Problem 5 - Strong Markov property} It is natural to ask if the process solution of the martingale problem $\big(\mathcal{S}_{pi}-\mathcal{M}_{ar}\big)$ satisfies the strong Markov property. 

\subsection{Problem 6 - Diffusion scattering at the junction vertex}  Recall that when the spinning measure is constant, that is:
$$\forall (t,l)\in[0,T]\times[0+\infty),~~\alpha_i(t,l)=\alpha_i,$$
and the coefficients of diffusion are homogeneous: $b_i(t,x,l)=b_i(x),~\sigma_i(t,x,l)=\sigma_i(x)$, it was proved in the seminal work \cite{freidlinS} (see Corollary 2.4) that for $\delta>0$ small enough, if we introduce the following stopping time:
$$\theta^\delta:=\inf\big\{~s\ge 0,~~x(s)=\delta~\big\},$$
then we have:
\begin{eqnarray}\label{eq distr instant diffra 1}
\forall i\in [I],~~\lim_{\delta \searrow 0} \P_{\bf 0}\big(~i(\theta^\delta)=i~\big) =\alpha_i.  
\end{eqnarray}
The last convergence shows that as soon as the spider process $(x,i)$ reaches the junction point $\bf 0$, the 'instantaneous' probability distribution for $(x,i)$ to be scattered along the ray $\mathcal{R}_i$ is exactly equal to $\alpha_i$. Does a similar result remains true for our martingale problem $\big(\mathcal{S}_{pi}-\mathcal{M}_{ar}\big)$? How to formulate it since now the coefficients of diffraction are random?


\medskip
\medskip

\textbf{Organization of the paper}: We will provide an answer to all the \textbf{Problems 1 to 7} previously described, respectively in the \textbf{Sections 3 to 9}.
\section{Notations and a remainder of the main result of \cite{Martinez-Ohavi Walsh}}
Fix $I\geq 2$ an integer. We denote $[I] = \{1,\dots, I\}$ and consider $\mathcal{J}$ a junction space with $I$ edges defined by
\begin{align*}
\mathcal{J}~~:=~~\{{\bf 0}\}\cup \pare{(0,\infty)\times [I]}.
\end{align*}
All the points of $\mathcal{J}$ are described by couples $(x,i)\in [0,\infty)\times [I]$  with the junction point ${\bf 0}$ identified with the equivalent class $\{(0,i)~:~i\in [I]\}$. With a slight abuse of notation, the common junction point ${\bf 0}$ of the $I$ edges will be often denoted be $0$ and we will also often identify the space $\mathcal{J}$ with a union of $I$ edges
$J_i=[0,+\infty)$ satisfying $J_i\cap J_j=\{0\}$ whenever $(i,j)\in [I]^2$ with $i\neq j$. With these notations $(x,i)\in {\mathcal{J}}$ is equivalent to asserting that $x\in J_i$. We endow naturally ${\mathcal{J}}$ with the distance $d^{\mathcal{J}}$ defined by
\begin{equation*}
\forall \Big((x,i),(y,j)\Big)\in \mathcal{J}^2,~~d^\mathcal{J}\Big((x,i),(y,j) \Big)  := \left\{
\begin{array}{ccc}
 |x-y| & \mbox{if }  & i=j\;,\\ 
x+y & \mbox{if }  & {i\neq j},\;
\end{array}\right.
\end{equation*}
so that $\pare{\mathcal{J}, d^{\mathcal{J}}}$ is a Polish space.

Note that we will often write $(x,i,l)$ instead of $((x,i),l)$ an element of ${\mathcal J}\times \R^+$.
\vspace{0,3 cm}
For $T>0$, we introduce the time-space domain $\mathcal{J}_T$ defined by
\begin{align*}\mathcal{J}_T~~&:=~~[0,T]\times\mathcal{J},
\end{align*}
and consider $\mathcal{C}^{\mathcal{J}}[0,T]$ the Polish space of maps defined from $[0,T]$ onto the junction space $\mathcal{J}$ that are continuous w.r.t. the metric $d^{\mathcal{J}}$. The space $\mathcal{C}^{\mathcal{J}}[0,T]$ is naturally endowed with the uniform metric $d^\mathcal{J}_{[0,T]}$ defined by: 
\begin{equation*}
\forall \Big((x,i),(y,j)\Big)\in \pare{\mathcal{C}^{\mathcal{J}}[0,T]}^2,~~d^\mathcal{J}_{[0,T]}:=\sup_{t \in [0,T]}d^\mathcal{J}\Big((x(t),i(t)),(y(t),j(t)) \Big).
\end{equation*}
Together with $\mathcal{C}^{\mathcal{J}}[0,T]$, we introduce
\begin{eqnarray*}
\mathcal{L}[0,T]~~:=~~\Big\{l:[0,T]\to \R^+,\text{ continuous non decreasing}\Big\}
\end{eqnarray*} 
endowed with the usual uniform distance $|\,.\,|_{(0,T)}$.

The modulus of continuity on $\mathcal{C}^{\mathcal{J}}[0,T]$ and $\mathcal{L}[0,T]$ are naturally defined for any $\theta\in (0, T]$ as
\begin{align*}
&\forall X=\big(x,i\big)\in \mathcal{C}^{\mathcal{J}}[0,T],\\
&\hspace{1,3 cm}\omega\pare{X,\theta}= \sup\Big\{d^{\mathcal{J}}\pare{(x(s),i(s)),(x(u),i(u))} ~\big\vert~(u,s)\in [0,T]^2,~~|u-s|\leq \theta\Big\};\\ 
&\forall f\in {\mathcal{L}}[0,T],\\
&\hspace{1,3 cm}\omega\pare{f,\theta}= \sup \Big\{|f(u)-f(s)|~\big\vert~(u,s)\in [0,T]^2,~~|u-s|\leq \theta\Big\}.
\end{align*}

We then form the product space 
\begin{align*}
\Phi~~=~~\mathcal{C}^{\mathcal{J}}[0,T]\times \mathcal{L}[0,T]
\end{align*}
 considered as a measurable Polish space equipped with its Borel $\sigma$-algebra $\mathbb{B}(\Phi)$ generated by the open sets relative to the metric $d^{\Phi}:=d^{\mathcal{J}}_{[0,T]} + |\,.\,|_{(0,T)}$. 

The canonical process $X$ on $(\Omega, \mathcal{F}) := (\Phi, \mathbb{B}(\Phi))$ is defined as
\[X:\begin{array}{cll}
[0,T]\times \Omega&\to\;\;{\mathcal J}\times \R^+\\
(s,\omega) &\mapsto\;\;\tilde{X}(s,\omega) := \omega(s),
\end{array}
\]
where:
$$\omega=(x(s),i(s),l(s))_{s\in [0,T]},~~\text{and}~~\forall s\in[0,T],~~\omega(s)=(x(s),i(s),l(s)).$$
We denote by $(\Psi_t:=\sigma(X(s), 0\leq s \leq t))_{0\leq t\leq T}$ the canonical filtration on $\pare{\Phi, \mathbb{B}(\Phi)}$.
We will denote by $\mathcal{P}([I])\subset [0,1]^I$,   
$
    \mathcal{P}([I]):=\Big\{(\alpha_{i})\in[0,1]^{I}~\big\vert~ \ds \sum_{i=1}^{I}\alpha_i=1\Big\},
$ the simplex set giving all probability measures on $[I]$.

\vspace{0,3 cm}

We introduce the following data  
\vspace{-0,3cm} 
$$\begin{cases}
(\sigma_i)_{i \in \{1 \ldots I\}} \in \pare{\mathcal{C}_b([0,T]\times[0,+\infty)\times [0,+\infty);\R)}^I\\
(b_i)_{i \in \{1 \ldots I\}} \in \pare{\mathcal{C}_b([0,T]\times[0,+\infty)\times [0,+\infty);\R)}^I\\
\alpha=(\alpha_i)_{i \in \{1 \ldots I\}} \in \mathcal{C}([0,T] \times [0,+\infty);\, \mathcal{P}([I]))
\end{cases}
$$
satisfying the following assumption $(\mathcal{H})$ (where $({\bf A})$ stands for alpha, $({\bf E})$ for ellipticity, and $({\bf R})$ for regularity):
\begin{center}{\textbf{Assumption } $(\mathcal{H})$}
\end{center}
\begin{align*}
&({\bf A})~~\exists\;\ds \underline{a}\in \left (0,{1}/{I}\right ],~~\forall i \in \{1    \ldots    I\}, ~~\forall (t,l)\in [0,T]\times[0,+\infty),~~\alpha_i(t,l)~~\ge~~\underline{a}.\\
&({\bf E})~~\exists\,\,\underline{\sigma} >0,~~\forall i \in \{1    \ldots    I\}, ~~\forall (t,x,l)\in [0,T]\times[0,+\infty)\times [0,+\infty),~~\sigma_i(t,x,l)~~\ge~~\underline{\sigma}.\\
&({\bf R})~~\exists (|b|,|\sigma|,|\overline{a}|)\in (0,+\infty)^3,~~\forall i \in \{1 \ldots I\},\\
&({\bf R} - i)\hspace{0,5 cm}\displaystyle\sup_{t,x,l}|b_i(t,x,l)|\;\;\,+\sup_{t,l}\sup_{(x,y),\;x\neq y} \frac{|b_i(t,x,l)-b_i(t,y,l)|}{|x-y|}\\
&\displaystyle\hspace{3,5cm}+\sup_{t,x}\sup_{(l,l'),\;l\neq l'} \frac{|b_i(t,x,l)-b_i(t,x,l')|}{|l-l'|}\\
&\displaystyle
\hspace{3,5 cm}+\sup_{x,l}\sup_{(t,s),\;t\neq s} \frac{|b_i(t,x,l)-b_i(s,x,l)|}{|t-s|}~~\leq~~|b|,\\
&({\bf R} - ii)\hspace{0,5 cm}\displaystyle\sup_{t,x,l}|\sigma_i(t,x,l)|\;\;\,+\sup_{t,l}\sup_{(x,y),\;x\neq y} \frac{|\sigma_i(t,x,l)-\sigma_i(t,y,l)|}{|x-y|}\\
&\displaystyle\hspace{3,5cm}+\sup_{t,x}\sup_{(l,l'),\;l\neq l'} \frac{|\sigma_i(t,x,l)-\sigma_i(t,x,l')|}{|l-l'|}\\
&\displaystyle
\hspace{3,5 cm}+\sup_{x,l}\sup_{(t,s),\;t\neq s} \frac{|\sigma_i(t,x,l)-\sigma_i(s,x,l)|}{|t-s|}~~\leq ~~|\sigma|,\\
&({\bf R}- iii)\hspace{0,5 cm}\displaystyle \sup_t\sup_{(l,l'),\;l\neq l'} \frac{|\alpha_i(t,l)-\alpha_i(t,l')|}{|l-l'|}+\displaystyle\sup_l\sup_{(t,s),\;t\neq s} \frac{|\alpha_i(t,l)-\alpha_i(s,l)|}{|t-s|}~~\leq ~~|\overline{a}|.
\end{align*}
\vspace{0,3 cm}

Let us introduce $\mathcal{C}^{1,2,1}_b(\mathcal{J}_T\times[0+\infty)])$ the class of continuous function defined on $\mathcal{J}_T\times[0+\infty)$ with regularity $\mathcal{C}^{1,2,1}_b([0,T]\times [0,\infty)^2)$ on each edge, namely
\begin{align*}
&\mathcal{C}^{1,2}_b(\mathcal{J}_T\times[0+\infty)):=\Big\{f:\mathcal{J}_T\to\R,~~(t,(x,i),l)\mapsto f_i(t,x,l)~\Big\vert~~ \forall i\in [I],\\
&f_i:[0,T]\times {J}_i\times[0+\infty)\to\R,\,(t,x,l)\mapsto f_i(t,x,l)\in \mathcal{C}^{1,2,1}_b([0,T]\times J_i\times[0+\infty)),\\
&\hspace{3 cm}~ \forall (t,(i,j),l)\in [0,T]\times [I]^{2}\times[0+\infty), \,f_i(t,0,l)=f_j(t,0,l)\Big\}.
\end{align*}
In the same way, we define $\mathcal{C}^{1,2}_b(\mathcal{J}_T)$, removing the dependence w.r.t the variable $l$.\\
\\
\textbf{Main result obtained in \cite{Martinez-Ohavi Walsh} (see Theorem 3.1)}:\\
\\
We end this Section by recalling the main result of the companion of this paper, related to the existence and uniqueness of weak solutions for a class of spider diffusions with random selections depending on the own local time of the process at the junction point.
\\
Define the following martingale problem of $(\Phi,\mathbb{B}(\Phi))$:
\smallskip
\begin{center}
\emph{$\big(\mathcal{S}_{pi}-\mathcal{M}_{ar}\big)$}
\end{center}
{\it Fix a given terminal condition $T>0$ and $(x_\star,i_\star)\in \mathcal{J}$. Can we ensure existence and uniqueness of a probability $\P^{x_\star,i_\star}$ defined on the measurable space $(\Phi,\mathbb{B}(\Phi))$ such that:\\
-(i) $\big(x(0),i(0),l(0)\big)=\big(x_\star,i_\star,0\big)$, $\P^{x_\star,i_\star}$-a.s.\\ 
-(ii) For each $s\in [0,T]$: 
\begin{eqnarray*}
\displaystyle\displaystyle\int_{0}^{s}\ind{x(u)>0}dl(u)=0,~~\P^{x_\star,i_\star}-\text{ a.s.}
\end{eqnarray*}
and $(l(u))_{u\in [0,T]}$ has increasing paths $\P^{x_\star,i_\star}$-almost surely.\\
-(iii) For any $f\in\mathcal{C}^{1,2}_b(\mathcal{J}_T)$, the following process:
\begin{align}
\label{eq:def-V1}
&\Bigg(~f_{i(s)}(s,x(s))- f_{i}(0,x)\displaystyle\\
&\displaystyle -\int_{0}^{s}\pare{\partial_tf_{i(u)}(u,x(u))+\displaystyle\frac{1}{2}\sigma_{i(u)}^2(u,x(u),l(u))\partial_{xx}^2f_{i(u)}(u,x(u))}du\nonumber\\
&\displaystyle-\int_{0}^{s}\pare{b_{i(u)}(u,x(u),l(u))\partial_xf_{i(u)}(u,x(u))}du-\displaystyle\sum_{j=1}^{I}\int_{0}^{s}\alpha_j(u,l(u))\partial_{x}f_{j}(u,0)dl(u)~\Bigg)_{0\leq s\leq T}\nonumber,
\end{align} 
is a $(\Psi_s)_{0 \leq s \leq T}$ martingale under the measure of probability $\P^{x_\star,i_\star}$.}
\smallskip
Hence, the main result given in \cite{Martinez-Ohavi Walsh}, Theorem 3.1 reads:
\begin{Theorem}\label{th: exis Spider}
Assume assumption $(\mathcal{H})$. Then, the martingale problem $\big(\mathcal{S}_{pi}-\mathcal{M}_{ar}\big)$ is well-posed.
\end{Theorem}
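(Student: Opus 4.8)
The plan is to prove the two halves of well-posedness --- existence and uniqueness --- by the two independent routes: a hand-made construction by concatenation for existence, and a PDE duality argument for uniqueness.

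\textbf{Existence.} First I would recall the base case: when the spinning measure $(\alpha_i)$ is constant and the coefficients $(\sigma_i,b_i)$ depend only on $(t,x)$, the corresponding martingale problem is solvable by \cite{Freidlin-Wentzell-2} and \cite{freidlinS} (the homogeneous case extending to time-dependent coefficients by the usual space-time trick), and the resulting process is strongly Markov with the Skorokhod representation $x(s)=x_\star+\int_0^s b\,du+\int_0^s\sigma\,dW+\ell(s)$. To incorporate the local-time dependence I would freeze $l$ on a mesh: fix a partition $0=l_0<l_1<\dots<l_N=K$ of a local-time window, start the base-case process with all coefficients evaluated at $l=l_0$, run it until its local time at ${\bf 0}$ first reaches $l_1$, then --- using the strong Markov property of the base process and concatenation of regular conditional probabilities --- restart from the current position the base-case process with coefficients frozen at $l=l_1$, and iterate. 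This produces a probability $\P^N$ on $\Phi$ under which the canonical process solves the frozen martingale problem with coefficients $\sigma_{i(u)}(u,x(u),\bar l^N(u))$, etc., where $\bar l^N$ is the piecewise-constant approximation of $l$. Next I would establish bounds uniform in $N$: moment bounds on $\sup_{s\leq T}x(s)$ and on $l(T)$ coming from the Skorokhod representation, Kolmogorov-type modulus-of-continuity bounds for $x(\cdot)$ and $l(\cdot)$, and --- crucially --- the non-stickiness estimate $\E^{\P^N}\croc{\int_0^T\ind{x(u)\leq\varepsilon}du}\leq C\varepsilon$ with $C$ independent of $N$ and $\varepsilon$. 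By Prokhorov and the Arzel\`a--Ascoli compactness criterion on $\mathcal{C}^{\mathcal{J}}[0,T]\times\mathcal{L}[0,T]$ the family $(\P^N)$ is tight; extract a weak limit $\P$ (passing to a Skorokhod representation if convenient). One then checks (i)--(iii): (i) is immediate; the drift and second-order integrals in (iii) pass to the limit by continuity of the coefficients and dominated convergence; the $dl$-term passes to the limit because $\bar l^N\to l$ uniformly, $l$ has bounded variation and $\alpha_j$ is continuous; and (ii) follows by first showing $\int_0^s\ind{x(u)\geq\varepsilon}\,dl(u)=0$ under $\P$ for each $\varepsilon>0$, using the uniform non-stickiness bound to forbid the support of $dl$ from charging $\{x>0\}$, and then letting $\varepsilon\to0$.

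\textbf{Uniqueness.} Here I would argue by duality with the linear parabolic system \eqref{eq : pde with l}. Given a solution $\P$ of $\big(\mathcal{S}_{pi}-\mathcal{M}_{ar}\big)$ and arbitrary smooth data, let $u=(u_i)$ be the solution of the backward version of \eqref{eq : pde with l}, well-posed by \cite{Martinez-Ohavi EDP} in the class for which $l\mapsto\partial_l u(t,0,l)$ is continuous at ${\bf 0}$. First I would extend the martingale identity (iii) to test functions depending also on $l$: since $l$ is continuous and of finite variation, the product rule adds only a term $\int_0^s\partial_l f\,dl(u)$ with no quadratic correction. Applying this extended identity to $u$ and using the interior equation together with the local-time Kirchhoff condition $\partial_l u(u,0,l(u))+\sum_j\alpha_j(u,l(u))\partial_x u_j(u,0,l(u))=\phi(u,l(u))$ --- which is precisely the place where the only regularity invoked is the trace of $\partial_l u$ at ${\bf 0}$ --- all the unwanted terms cancel, so that $\E^{\P}$ of the terminal functional is expressed purely through the initial datum $(x_\star,i_\star)$ and the data of the PDE, independently of which solution $\P$ was chosen. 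Varying the data over a separating family determines the one-dimensional marginals of $\P$, and repeating the argument from later times determines all finite-dimensional marginals, giving uniqueness.

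\textbf{Main obstacle.} The delicate points sit at the junction ${\bf 0}$. For existence the hard part is proving the non-stickiness estimate uniformly in the mesh $N$, because the diffusion coefficients are discontinuous at ${\bf 0}$ with discontinuities driven by the chaotic index process $i(\cdot)$, and simultaneously showing that the approximate local times $l^N$ converge to the genuine local time of the limiting $x$. For uniqueness the main difficulty is that the PDE solution $u$ need not be $\mathcal{C}^{1,2,1}$ in the interior (only $\partial_l u$ is controlled at ${\bf 0}$) and the coefficients jump at ${\bf 0}$, so It\^o's formula cannot be applied verbatim; it must be justified by localizing away from ${\bf 0}$, applying the classical formula there, and controlling the contribution of a shrinking neighbourhood of ${\bf 0}$ via the non-stickiness bound and the occupation-time formula.
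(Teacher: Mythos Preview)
This theorem is not proved in the present paper but recalled from the companion article \cite{Martinez-Ohavi Walsh} (Theorem 3.1 there); the Introduction here summarizes that proof as existence via the Freidlin--Wentzell building block combined with concatenation of probability measures and a tightness argument, and uniqueness via PDE duality with the parabolic system of \cite{Martinez-Ohavi EDP}. Your proposal follows exactly this route and correctly identifies the two main technical points --- the uniform non-stickiness estimate needed for the limit passage in existence, and the extension of the martingale property to the weaker regularity class $\mathfrak{C}^{1,2,0}_{\{0\},b}$ (this is Proposition 6.4 of \cite{Martinez-Ohavi Walsh}, invoked in the proof of Theorem~\ref{th : exis para with l bord infini} here) needed for uniqueness.
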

Finally, we state the following corollary of the last Theorem \ref{th: exis Spider}, that will be useful in this contribution.
\begin{Corollary}\label{cr: exis Spider}
Assume assumption $(\mathcal{H})$. Fix a given terminal condition $T>0$, $t\in[0,T]$ and $(x_\star,i_\star,l_\star)\in \mathcal{J}\times[0,+\infty)$. Then there exists a unique probability $\P_t^{x_\star,i_\star,l_\star}$ defined on the measurable space $(\Phi,\mathbb{B}(\Phi))$ such that:\\
-(i) $\big(x(s),i(s),l(s)\big)=\big(x_\star,i_\star,l_\star\big)$, for all $s\in[0,t]$, $\P_t^{x_\star,i_\star,l_\star}$-a.s.\\ 
-(ii) For each $s\in [t,T]$: 
\begin{eqnarray*}
\displaystyle\displaystyle\int_{t}^{s}\ind{x(u)>0}dl(u)=0,~~\P_t^{x_\star,i_\star,l_\star}-\text{ a.s.}
\end{eqnarray*}
and $(l(u))_{u\in [t,T]}$ has increasing paths $\P_t^{x_\star,i_\star,l_\star}$-almost surely.\\
-(iii) For any $f\in\mathcal{C}^{1,2,1}_b(\mathcal{J}_T\times[0+\infty)])$, the following process:
\begin{align}
\label{eq:def-V}
&\nonumber\Bigg(~f_{i(s)}(s,x(s).l(s))- f_{i_\star}(t,x_\star,l_\star)\\
&\nonumber\displaystyle -\int_{t}^{s}\pare{\partial_tf_{i(u)}(u,x(u),l(u))+\displaystyle\frac{1}{2}\sigma_{i(u)}^2(u,x(u),l(u))\partial_{xx}^2f_{i(u)}(u,x(u),l(u))}du\\
&\nonumber\displaystyle-\int_{t}^{s}\pare{b_{i(u)}(u,x(u),l(u))\partial_xf_{i(u)}(u,x(u),l(u))}du\\
&-\ds\int_{t}^{s}\Big(\partial_{l}f(u,0,l(u))+\displaystyle\sum_{j=1}^{I}\alpha_j(u,l(u))\partial_{x}f_{j}(u,0,l(u))\Big)dl(u)~\Bigg)_{t\leq s\leq T},
\end{align} 
is a $(\Psi_s)_{t \leq s \leq T}$ martingale under the measure of probability $\P_t^{x_\star,i_\star,l_\star}$.   
\end{Corollary}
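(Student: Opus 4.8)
The plan is to deduce Corollary~\ref{cr: exis Spider} from Theorem~\ref{th: exis Spider} in two movements: a shift of the time and local-time variables that reduces the initial data $(t,x_\star,i_\star,l_\star)$ to $(0,x_\star,i_\star,0)$, followed by an enlargement of the admissible class of test functions from $\mathcal{C}^{1,2}_b(\mathcal{J}_T)$ (without $l$-dependence) to $\mathcal{C}^{1,2,1}_b(\mathcal{J}_T\times[0,+\infty))$. For the shift I would set, for $(w,x,l)\in[0,T-t]\times[0,+\infty)^2$, $\tilde\sigma_i(w,x,l):=\sigma_i(t+w,x,l_\star+l)$, and similarly $\tilde b_i$ and $\tilde\alpha_i(w,l):=\alpha_i(t+w,l_\star+l)$; since translations preserve the Lipschitz constants, the lower bounds $({\bf A})$ and $({\bf E})$ and the identity $\sum_i\tilde\alpha_i\equiv 1$, the triple $(\tilde\sigma,\tilde b,\tilde\alpha)$ still satisfies $(\mathcal{H})$ on the horizon $T-t$. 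Applying Theorem~\ref{th: exis Spider} to these coefficients and to the point $(x_\star,i_\star)$ gives a probability $\bar{\P}$ on $\Phi$; I then define $\P_t^{x_\star,i_\star,l_\star}$ as the image of $\bar{\P}$ under the map that freezes a path at $(x_\star,i_\star,l_\star)$ on $[0,t]$ and sends $\bar\omega$ on $[t,T]$ to $s\mapsto(\bar x(s-t),\bar i(s-t),l_\star+\bar l(s-t))$. Items (i) and (ii) then follow immediately from this definition and from the corresponding properties of $\bar{\P}$ (recall $\bar l(0)=0$), and the change of variable $u\mapsto u-t$ inside the martingale characterizing $\bar{\P}$ yields (iii) for every $l$-independent test function $g\in\mathcal{C}^{1,2}_b(\mathcal{J}_T)$, in which the $\partial_l g$-term is absent.

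The heart of the matter is to upgrade (iii) to an arbitrary $f\in\mathcal{C}^{1,2,1}_b(\mathcal{J}_T\times[0,+\infty))$. Fixing such an $f$, times $t\le s'\le s\le T$, a set $A\in\Psi_{s'}$ and a partition $s'=s_0<\dots<s_n=s$, I would telescope $f_{i(s_n)}(s_n,x(s_n),l(s_n))-f_{i(s_0)}(s_0,x(s_0),l(s_0))$ and split the $k$-th increment as
\begin{align*}
&\Big[f_{i(s_{k+1})}(s_{k+1},x(s_{k+1}),l(s_{k+1}))-f_{i(s_{k+1})}(s_{k+1},x(s_{k+1}),l(s_k))\Big]\\
&\qquad+\Big[f_{i(s_{k+1})}(s_{k+1},x(s_{k+1}),l(s_k))-f_{i(s_k)}(s_k,x(s_k),l(s_k))\Big].
\end{align*}
In the second bracket the $l$-argument is frozen: for each fixed $l_0$ the function $f(\cdot,\cdot,\cdot,l_0)$ belongs to $\mathcal{C}^{1,2}_b(\mathcal{J}_T)$, so the reduction above applies, and a standard substitution (``freezing'') argument — legitimate because $l_0\mapsto f(\cdot,\cdot,\cdot,l_0)$ is continuous and all data are bounded — lets me insert $l_0=l(s_k)$ after conditioning on $\Psi_{s_k}$; the conditional expectation of this bracket then equals that of $\int_{s_k}^{s_{k+1}}\big(\partial_tf+\tfrac12\sigma^2\partial_{xx}f+b\,\partial_xf\big)(u,x(u),l(s_k))\,du+\int_{s_k}^{s_{k+1}}\sum_j\alpha_j(u,l(u))\,\partial_xf_j(u,0,l(s_k))\,dl(u)$, the diffusion coefficients being still evaluated at the running value $l(u)$. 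In the first bracket, the fundamental theorem of calculus in $l$ together with the image-measure identity $\int_{l(s_k)}^{l(s_{k+1})}h(r)\,dr=\int_{s_k}^{s_{k+1}}h(l(u))\,dl(u)$ (valid since $l$ is continuous and non-decreasing) turns it into $\int_{s_k}^{s_{k+1}}\partial_lf_{i(s_{k+1})}(s_{k+1},x(s_{k+1}),l(u))\,dl(u)$.

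Summing over $k$ and refining the mesh, the uniform continuity of $f$ and its displayed derivatives on the a.s.\ compact range of $(u,x(u),l(u))$ — all the quantities involved being dominated by $C(1+l(T))$ with $\E^{\P}[l(T)]<\infty$, a fact inherited from Theorem~\ref{th: exis Spider} — lets me replace $l(s_k)$ by $l(u)$ and $s_{k+1}$ by $u$ throughout; moreover, since $dl$ is carried by $\{u:x(u)=0\}$ and $\partial_lf_i(u,0,l)=\partial_lf(u,0,l)$ does not depend on $i$ (differentiate in $l$ the junction compatibility $f_i(t,0,l)=f_j(t,0,l)$), the $\partial_lf$-contribution collapses to $\int_{s'}^{s}\partial_lf(u,0,l(u))\,dl(u)$. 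A dominated-convergence passage in $\E^{\P}[\,\cdot\,\ind{A}]$ then produces exactly the martingale identity required in (iii).

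For uniqueness, any $\P$ satisfying (i)--(iii) satisfies (iii) in particular for $l$-independent test functions, and undoing the shift through $R(\omega)(v):=(x(t+v),i(t+v),l(t+v)-l_\star)$ shows that the law $R_\star\P$ solves, on the horizon $T-t$, the martingale problem $\big(\mathcal{S}_{pi}-\mathcal{M}_{ar}\big)$ associated with $(\tilde\sigma,\tilde b,\tilde\alpha)$ and started from $(x_\star,i_\star)$; by the uniqueness half of Theorem~\ref{th: exis Spider}, $R_\star\P$ is uniquely determined, and since (i) pins $\omega$ down on $[0,t]$, the measure $\P$ is recovered from $R_\star\P$ and from $l_\star$. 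The main obstacle is the second movement above: everything comes down to justifying that the two families of Riemann sums converge to the asserted $du$- and $dl$-integrals, which hinges on the support property of $dl$ — allowing the $\partial_lf$ and $\partial_xf_j(\cdot,0,\cdot)$ terms to be read at the vertex, where the index $i$ is immaterial — and on the uniform continuity and boundedness supplied by membership in $\mathcal{C}^{1,2,1}_b$.
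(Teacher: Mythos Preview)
The paper does not supply a proof of Corollary~\ref{cr: exis Spider}: it is stated immediately after Theorem~\ref{th: exis Spider} as a direct consequence of the latter, and the reader is left to fill in the two evident steps (the time/local-time shift and the enlargement of the test-function class). Your proposal therefore cannot be compared line by line against a written argument; rather, it supplies details that the paper elects to omit.

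Your argument is correct. The shift $(w,x,l)\mapsto(t+w,x,l_\star+l)$ in the coefficients, followed by the push-forward of the law under the corresponding path-transformation, is exactly the right way to manufacture $\P_t^{x_\star,i_\star,l_\star}$ from the law furnished by Theorem~\ref{th: exis Spider}; items (i)--(ii) and the uniqueness clause follow as you indicate. For the extension of (iii) to $l$-dependent test functions, your telescoping-plus-freezing argument is sound: the key points --- that the frozen function $f(\cdot,\cdot,\cdot,l(s_k))$ is a legitimate $\mathcal{C}^{1,2}_b(\mathcal{J}_T)$ test function once $l(s_k)$ is held fixed, that one may substitute the $\Psi_{s_k}$-measurable value $l(s_k)$ after conditioning by a monotone-class/continuity argument, that $dl$ is carried by $\{x=0\}$ so that $\partial_l f_i(u,0,l)=\partial_l f(u,0,l)$ is independent of $i$, and that the Riemann sums converge by uniform continuity on the a.s.\ compact range --- are all identified and used correctly.

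It is worth noting that the paper itself performs a closely related extension later, in Step~4 of the proof of the It\^o formula (Theorem~\ref{th It\^o's formula}), but by a different mechanism: there one takes product test functions $f(t,x,l)=t^k h(x) l^m$, applies the \emph{classical} It\^o formula to the product (the local-time factor being of bounded variation), and then passes to general $f$ by density. That route is slightly more streamlined because it avoids the discrete freezing/telescoping and the accompanying mesh limit; your route, on the other hand, stays entirely at the level of the martingale problem and does not invoke the Brownian representation~\eqref{eq : diff x}, which is arguably more in keeping with the spirit of a corollary of Theorem~\ref{th: exis Spider}.
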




\section{Problem 1 - It\^o's formula}
We answer to the first problem - \textbf{Problem 1} - described in Introduction \ref{sec intro compa}. We obtain then an It\^o's formula, with exactly the same Brownian  motion $W_f(\cdot)=W(\cdot)$, appearing in the stochastic dynamic \eqref{eq : diff x}, satisfied by the process $x(\cdot)$.
\begin{Theorem}\label{th It\^o's formula}
  For any $f\in\mathcal{C}^{1,2,1}_b(\mathcal{J}_T\times[0+\infty)])$,
\begin{align}
\label{eq:Ito-formula 00}
&f_{i(t)}(t,x(t),l(t))- f_{i_\star}(0,x_\star,0)\displaystyle\nonumber\\
&=\displaystyle \int_{0}^{t}\sigma_{i(u)}(u,x(u),l(u))\partial_xf_{i(u)}(u,x(u),l(u))dW(u)\nonumber\\
&\;\;\;\displaystyle + \int_{0}^{t}\partial_tf_{i(u)}(u,x(u),l(u))du+\int_{0}^{t}b_{i(u)}(u,x(u),l(u))\partial_xf_{i(u)}(u,x(u),l(u))du\nonumber\\
&\;\;\;\displaystyle +\frac{1}{2}\int_{0}^{t}\sigma_{i(u)}^2(u,x(u),l(u))\partial_{xx}^2f_{i(u)}(u,x(u),l(u))du\nonumber\\
&\;\;\;\displaystyle + \int_{0}^{t}\partial_{l}f(u,0,l(u))dl(u) + \displaystyle\sum_{j=1}^I\int_{0}^{t}\alpha_j(u,l(u))\partial_{x}f_{j}(u,0,l(u))dl(u)
\end{align} 
holding $\P^{x_\star,i_\star}-\text{ a.s.}$ for any $t\in[0,T]$.
\end{Theorem}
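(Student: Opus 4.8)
The plan is to show that the continuous martingale which the martingale problem attaches to $f$ coincides with the stochastic integral of $\sigma_{i(\cdot)}\partial_xf_{i(\cdot)}$ against the very Brownian motion $W$ appearing in \eqref{eq : diff x}; this amounts to proving $\langle N\rangle\equiv 0$ for the difference $N$ of the two, which in turn requires computing the bracket of that martingale and its covariation with $W$. Throughout I will abbreviate $\mathcal{A}f_i(u):=\partial_tf_i+\tfrac12\sigma_i^2\partial_{xx}^2f_i+b_if_i'$, all evaluated at $(u,x(u),l(u))$ (with $f_i'=\partial_xf_i$), and $Y_u:=f_{i(u)}(u,x(u),l(u))$, and I will freely use that, by \eqref{eq : diff x}, $x(\cdot)$ is a continuous semimartingale on $[0,T]$ whose martingale part is $\int_0^{\cdot}\sigma_{i(u)}(u,x(u),l(u))\,dW(u)$, so that $x$ is $\P^{x_\star,i_\star}$-a.s.\ bounded on $[0,T]$.

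First I would apply Corollary \ref{cr: exis Spider} (with $t=0$, $l_\star=0$, the corresponding probability being $\P^{x_\star,i_\star}$) to $f$ itself, so that the process $M^f_s:=Y_s-f_{i_\star}(0,x_\star,0)-\int_0^s\mathcal{A}f_{i(u)}(u)\,du-\int_0^s\big(\partial_lf(u,0,l(u))+\sum_{j}\alpha_j(u,l(u))\partial_xf_j(u,0,l(u))\big)dl(u)$ is a $(\Psi_s)$-martingale; it is continuous, since the only jumps of $i$ happen where $x=0$ and there the matching condition built into $\mathcal{C}^{1,2,1}_b(\mathcal{J}_T\times[0,+\infty))$ makes $Y$ continuous, while the Lebesgue and Stieltjes integrals are continuous in $s$. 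To get $\langle M^f\rangle$ I would then apply the same corollary to $f^2$, which still lies in $\mathcal{C}^{1,2,1}_b(\mathcal{J}_T\times[0,+\infty))$ because boundedness and the matching condition at $\mathbf 0$ pass to squares, and compare the semimartingale decomposition of $Y_s^2$ it yields with the one furnished by the classical Itô formula applied to the semimartingale $Y=M^f+(\text{finite variation})$. Using $\mathcal{A}(f^2)=2f\mathcal{A}f+\sigma^2(\partial_xf)^2$ and the fact that $dl$ is carried by $\{x(u)=0\}$, where $Y_u=f(u,0,l(u))$, all the $du$- and $dl$-drift terms cancel and one is left with $M^{f^2}_s-2\int_0^sY_u\,dM^f_u=\langle M^f\rangle_s-\int_0^s\sigma_{i(u)}^2(\partial_xf_{i(u)})^2(u,x(u),l(u))\,du$; the left side is a continuous local martingale, the right side a continuous finite-variation process vanishing at $0$, so both vanish and $\langle M^f\rangle_t=\int_0^t\sigma_{i(u)}^2(\partial_xf_{i(u)})^2(u,x(u),l(u))\,du$.

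For the covariation with $W$ I would feed the test function $g_i(t,x,l):=x\,f_i(t,x,l)$ into Corollary \ref{cr: exis Spider} (it satisfies the matching condition since $g_i(t,0,l)=0$; its unboundedness is handled by localizing at $\sigma_R:=\inf\{s:x(s)\ge R\}$, which equals $T$ for $R$ large by the a.s.\ boundedness of $x$, so the conclusion is unaffected). A direct computation gives $\mathcal{A}g_i=x\,\mathcal{A}f_i+\sigma_i^2\partial_xf_i+b_if_i$, while at $x=0$ one has $\partial_lg_j(\cdot,0,\cdot)=0$ and $\sum_j\alpha_j\partial_xg_j(\cdot,0,\cdot)=f(\cdot,0,\cdot)\sum_j\alpha_j=f(\cdot,0,\cdot)$, the last equality using $\sum_j\alpha_j\equiv1$. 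On the other hand the product rule gives $d(x_sY_s)=x_s\,dY_s+Y_s\,dx_s+d\langle x,Y\rangle_s$ with $d\langle x,Y\rangle_s=\sigma_{i(s)}(s,x(s),l(s))\,d\langle W,M^f\rangle_s$. Matching the two (unique) semimartingale decompositions of $x_sY_s$ — the $dl$-terms agree identically, again because $dl$ charges only $\{x=0\}$ — the absolutely continuous drift parts force $\sigma_{i(s)}\,d\langle W,M^f\rangle_s=\sigma_{i(s)}^2\partial_xf_{i(s)}\,ds$, i.e.\ $\langle W,M^f\rangle_t=\int_0^t\sigma_{i(u)}\partial_xf_{i(u)}(u,x(u),l(u))\,du$. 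Finally, with $N_t:=M^f_t-\int_0^t\sigma_{i(u)}\partial_xf_{i(u)}(u,x(u),l(u))\,dW(u)$ one gets from the two bracket identities that $\langle N\rangle_t=\langle M^f\rangle_t-2\int_0^t\sigma_{i(u)}\partial_xf_{i(u)}\,d\langle M^f,W\rangle_u+\int_0^t\sigma_{i(u)}^2(\partial_xf_{i(u)})^2\,du=0$, hence $N\equiv0$, which is exactly \eqref{eq:Ito-formula 00}.

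The hard part will not be any single estimate but the careful bookkeeping of the $dl$-integrals through the two product-rule computations: one repeatedly uses that $dl$ is supported on $\{x=0\}$ (item (ii) of Corollary \ref{cr: exis Spider}), that the matching condition makes $f_i(t,0,l)$ and $\partial_lf_i(t,0,l)$ independent of $i$, and that $\sum_j\alpha_j(t,l)=1$ annihilates the apparent extra boundary term arising for $g=xf$. Together with the (routine but indispensable) localizations that legitimize inserting the unbounded test functions $x$ and $xf_i$ into the martingale problem — which rely on the a.s.\ boundedness of $x$ and finiteness of $l$ on $[0,T]$ coming from \eqref{eq : diff x} — these are the only points demanding genuine care; the rest is the standard "$\langle N\rangle=0\Rightarrow N=0$" argument.
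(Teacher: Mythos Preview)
Your argument is correct, and it takes a genuinely different route from the paper's. The paper proceeds pathwise: it first restricts to $f\in\mathcal{C}^2_b(\mathcal{J})$ with no $(t,l)$-dependence, introduces the excursion stopping times $(\tau_n^\varepsilon,\theta_n^\varepsilon)$, and computes $\langle\mathcal{M}(f)\rangle$ and $\langle m,\mathcal{M}(f)\rangle$ by splitting the trajectory into ``away from $\mathbf 0$'' pieces (where the classical It\^o formula on each ray applies) and ``near $\mathbf 0$'' pieces (whose contribution is killed by the non-stickiness estimate). It then invokes an exponential-martingale argument to force $\int\Theta\,d(m,\mathcal{M}(f))\equiv 0$, and only at the end extends to $(t,l)$-dependent $f$ by a product-form density argument.

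Your approach is the Stroock--Varadhan algebraic one: feed $f$, $f^2$, and $xf$ into the martingale problem (via Corollary~\ref{cr: exis Spider}, which already allows $l$-dependent test functions) and read off $\langle M^f\rangle$ and $\langle W,M^f\rangle$ by matching the finite-variation parts. This bypasses the excursion machinery and the explicit non-stickiness estimate entirely, works directly for $f\in\mathcal{C}^{1,2,1}_b(\mathcal{J}_T\times[0,+\infty))$ without the final extension step, and replaces the exponential-martingale device by the one-line $\langle N\rangle=0$ conclusion. The price is the need to localize for the unbounded test function $xf$, which you handle correctly. The paper's approach, by contrast, gives more explicit control on what happens near the vertex and makes the role of the non-stickiness condition transparent, which is useful elsewhere in the paper (Sections~\ref{sec approx quadra temps loc} and~\ref{sec scatering property}).
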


\begin{proof}

In the following and in order to avoid overloading the notations unnecessarily, all equalities and inequalities have to be understood in the  $\P^{x_\star,i_\star} - \text{ a.s.}$ sense. 
\medskip
\medskip

For any $h\in\mathcal{C}^{2}_b(\mathcal{J})$, $u\in [0,T]$ and $l\in[0,+\infty)$, let us introduce the notation
\begin{align*}
\mathcal{L}^{u,l}_{i}h_{i}(x) = b_{i}(u,x,l)\partial_xh_{i}(x) +\frac{1}{2}\sigma_{i}^2(u,x,l)\partial_{xx}^2h_{i}(x)
\end{align*}
for $i\in [\![1,I]\!]$.

To begin with let us fix $f\in\mathcal{C}^{2}_b(\mathcal{J})$.

{\bf Step 1}

Set 
\begin{align}
{\mathcal M}_t(f)&:=f_{i(t)}(x(t)) - f(x_\star)\nonumber\\
&\;\;\;\;- \int_{0}^{t} \mathcal{L}^{s,l(s)}_{i(s)}f_{i(s)}(x(s))ds -\displaystyle\sum_{j=1}^I\int_{0}^{t}\alpha_j(s,l(s))\partial_{x}f_{j}(0)dl(s).
\end{align}

From the fundamental theorem, we know that $\pare{{\mathcal M}_t(f)}_{t\in [0,T]}$ is an $\pare{{\mathcal F}_t}$-martingale.

Observe that $t\mapsto \ds f(x_\star) + \int_{0}^{t} \mathcal{L}^{s,l(s)}_{i(s)}f_{i(s)}(x(s))ds + \displaystyle\sum_{j=1}^I\int_{0}^{t}\alpha_j(s,l(s))\partial_{x}f_{j}(0)dl(s)$ is of bounded variation. Thus we have
\begin{equation}
\label{eq:quadratic-variation-m2}
\langle {\mathcal M}(f) \rangle_t = \langle f_{i(.)}(x(.))\rangle_t.
\end{equation}

The aim of this first step is to study the quadratic variation of $\pare{{\mathcal M}(f)}_{t\in [0,T]}$. For this purpose we fix some $\varepsilon >0$ and introduce the following sequence of stopping times related to the excursions of the process $x(\cdot)$ around the junction point $\bf 0$:
\begin{eqnarray*}
&\tau_0^\varepsilon=0\\   
&\theta_0^\varepsilon:=\inf\{s\ge 0 :~~x(s)=\varepsilon\}\\
&\tau_1^\varepsilon=\inf\{s\ge \theta_0^\varepsilon :~~x(s)=0\}\\
&..................\\
&\theta_n^\varepsilon:= \inf\{s\ge \tau_{n}^\varepsilon :~~x(s)=\varepsilon\}\\
&\tau_{n+1}^\varepsilon:= \inf\{s\ge \theta_{n}^\varepsilon : ~~x(s)=0\}\\
&..................
\end{eqnarray*}

We start from the decomposition
\begin{align}
f_{i(t)}(x(t)) - f_{i_\star}(x_\star) &= \sum_{n\geq 0} f_{i(t\wedge\theta_n^\varepsilon)}(x(t\wedge\theta_n^\varepsilon)) - f_{i(t\wedge\tau_n^\varepsilon)}(x(t\wedge\tau_n^\varepsilon))\nonumber\\
&\hspace{0,4 cm}+ \sum_{n\geq 0} f_{i(t\wedge\tau_{n+1}^\varepsilon)}(x(t\wedge\tau_{n+1}^\varepsilon)) - f_{i(t\wedge\theta_n^\varepsilon)}(x(t\wedge\theta_n^\varepsilon))\nonumber\\
&:=\sum_{n\geq 0} M_t^{1,n,\varepsilon}(f) + \sum_{n\geq 0} M_t^{2,n,\varepsilon}(f).
\end{align}
\medskip

Observe that $\P^{x_\star,i_\star}$ is concentrated by definition on ${\mathcal C}(\mathcal {J}\times [0,+\infty))$ and the topology given on $\mathcal {J}$ induces that $s\mapsto i(s)$ remains constant on each interval $[\theta_n^\varepsilon, \tau_{n+1}^\varepsilon]$.
In particular we have 
\begin{align}
\label{eq:cases-decomp-ta}
&M_t^{2,n,\varepsilon}(f):= f_{i(t\wedge\tau_{n+1}^\varepsilon)}(x(t\wedge\tau_{n+1}^\varepsilon)) - f_{i(t\wedge\theta_n^\varepsilon)}(x(t\wedge\theta_n^\varepsilon))\nonumber\\
&=\left \{\begin{array}{l}
f_{i(\tau_{n+1}^\varepsilon)}((x(\tau_{n+1}^\varepsilon)) - f_{i(\theta_n^\varepsilon)}(x(\theta_n^\varepsilon)) = f_{i(\theta_n^\varepsilon)}((x(\tau_{n+1}^\varepsilon)) - f_{i(\theta_n^\varepsilon)}(x(\theta_n^\varepsilon))\;\;\;\text{ if }t\geq \tau^\varepsilon_{n+1}\\
f_{i(t)}((x(t)) - f_{i(\theta_n^\varepsilon)}(x(\theta_n^\varepsilon)) = f_{i(\theta_n^\varepsilon)}((x(t)) - f_{i(\theta_n^\varepsilon)}(x(\theta_n^\varepsilon))\;\;\;\text{ if }t\in [\theta_n^\varepsilon, \tau^\varepsilon_{n+1})\\
0\;\;\;\text{ if }t<\theta_n^\varepsilon.
\end{array}
\right .
\end{align}
There is also no increase of the local time on each interval $[\theta_n^\varepsilon, \tau_{n+1}^\varepsilon]$. So, conditionally on $i(\theta_n^\varepsilon)= j$, we may apply the classical It\^o's formula applied to $f_{j}$ and $(x(t))$ (whose differential is given in \eqref{eq : diff x}) and from \eqref{eq:cases-decomp-ta} we prove that 
\begin{align*}
M_t^{2,n,\varepsilon}(f)&= \int_{t\wedge \theta_n^\varepsilon}^{t\wedge\tau_{n+1}^\varepsilon} \mathcal{L}_{i(s)}f_{i(\theta_n^\varepsilon)}(x(s))ds + \int_{t\wedge\theta_n^\varepsilon}^{t\wedge\tau_{n+1}^\varepsilon}\partial_{x}f_{i(\theta_n^\varepsilon)}(x(s))\sigma_{i(s)}(s,x(s),l(s))dW(s)\\
&=\int_{t\wedge \theta_n^\varepsilon}^{t\wedge\tau_{n+1}^\varepsilon} \mathcal{L}_{i(s)}f_{i(s)}(x(s))ds + \int_{t\wedge\theta_n^\varepsilon}^{t\wedge\tau_{n+1}^\varepsilon}\partial_{x}f_{i(s)}(x(s))\sigma_{i(s)}(s,x(s),l(s))dW(s).
\end{align*}
In particular
\begin{align}
\label{eq:vq-facile0}
\langle M^{2,n,\varepsilon}(f)\rangle_t
&=\int_{t\wedge\theta_n^\varepsilon}^{t\wedge\tau_{n+1}^\varepsilon}\pare{\sigma_{i(s)}(s,x(s),l(s))\partial_{x}f_{i(s)}(x(s))}^2 ds.
\end{align}
Due to the independence of the increments of the Brownian motion on the intervals $[\theta_n^\varepsilon,\tau_{n+1}^\varepsilon)$ and $ [\theta_{n'}^\varepsilon,\tau_{n'+1}^\varepsilon)$, we have using Kronoecker's symbol
\begin{align*}
&\langle \sum_{n\geq 0}M^{2,n,\varepsilon}(f)\rangle_t = \langle \sum_{n\geq 0}M^{2,n,\varepsilon}(f), \sum_{n'\geq 0}M^{2,n',\varepsilon}(f)\rangle_t\\
&=\sum_{n\geq 0}\sum_{n'\geq 0} \langle \int_{.\wedge\theta_n^\varepsilon}^{.\wedge\tau_{n+1}^\varepsilon}\partial_{x}f_{i(s)}(x(s))\sigma_{i(s)}(s,x(s),l(s))dW(s),\\
&\ds  \int_{.\wedge\theta_{n'}^\varepsilon}^{.\wedge\tau_{n'+1}^\varepsilon}\partial_{x}f_{i(s)}(x(s))\sigma_{i(s)}(s,x(s),l(s))dW(s)\rangle_t\\
&=\sum_{n\geq 0}\sum_{n'\geq 0}\langle M^{2,n,\varepsilon}(f), M^{2,n',\varepsilon}(f)\rangle_t \delta_{n\,n'}=\sum_{n\geq 0}\langle M^{2,n,\varepsilon}(f)\rangle_t,
\end{align*}
from which we deduce that
\begin{align}
\label{eq:vq-facile}
&\nonumber \langle \sum_{n\geq 0}M^{2,n,\varepsilon}(f)\rangle_t = \sum_{n\geq 0}\langle M^{2,n,\varepsilon}(f)\rangle_t =\\
&\ds\sum_{n\geq 0}\int_{t\wedge\theta_n^\varepsilon}^{t\wedge\tau_{n+1}^\varepsilon}\pare{\sigma_{i(s)}(s,x(s),l(s))\partial_{x}f_{i(s)}(x(s))}^2 ds.
\end{align}
Let us now turn to the study of the quadratic variation of the sum $\ds \pare{\sum_n M_t^{1,n,\varepsilon}(f)}_{t\in [0,T]}$ {\it i.e.} the quadratic variation of the sum $$\pare{\sum_n f_{i(s\wedge \theta_n^\varepsilon)}(x(s\wedge \theta_n^\varepsilon)) - f_{i(s\wedge\tau_n^\varepsilon)}(x(s\wedge\tau_n^\varepsilon))}_{s\in [0,T]}$$
that we decide to write more informally
$$\pare{\sum_n [\Delta_n(f)]_s}_{s\in [0,T]}\text{ to simplify the notations.}$$ 

Introduce an arbitrary discretization $(s_{i})_{i\in T}$ of $[0,t]$. We have
\begin{align*}
{J}\pare{{\mathcal T}}
&:=\sum_{i\in T}\pare{\sum_{n}([\Delta_{n}(f)]_{s_{i+1}} -  [\Delta_{n}(f)]_{s_{i}})}^2\\
&= \sum_{i\in T}\sum_n\pare{[\Delta_n(f)]_{s_{i+1}} - [\Delta_n(f)]_{s_{i}}}^2\\
&\;\;\;\;\;- \sum_{i\in T}\sum_{n,n',n\neq n'}\pare{[\Delta_n(f)]_{s_{i+1}} - [\Delta_n(f)]_{s_{i}}}\pare{[\Delta_{n'}(f)]_{s_{i+1}} - [\Delta_{n'}(f)]_{s_{i}}}.
\end{align*}
Observe that due to $[\tau_n, \theta_n]\cap[\tau_{n'}, \theta_{n'}]=\emptyset$ when $n\neq n'$, we have
$$\sum_{i\in T}\sum_{n,n',n\neq n'}\pare{[\Delta_n(f)]_{s_{i+1}} - [\Delta_n(f)]_{s_{i}}}\pare{[\Delta_{n'}(f)]_{s_{i+1}} - [\Delta_{n'}(f)]_{s_{i}}}=0.$$
Hence,
\begin{align*}
{J}\pare{{\mathcal T}}&= \sum_{i\in T}\pare{\sum_n [\Delta_n(f)]_{s_{i+1}} - [\Delta_n(f)]_{s_{i}}}^2\\
&= \sum_{i\in T}\sum_n\pare{[\Delta_n(f)]_{s_{i+1}} - [\Delta_n(f)]_{s_{i}}}^2\\
&=\sum_n \sum_{i\in T}\pare{f_{i(s_{i+1}\wedge \theta^\varepsilon_n)}(x(s_{i+1}\wedge \theta^\varepsilon_n)) - f_{i(s_{i}\wedge \theta^\varepsilon_n)}(x(s_{i}\wedge \theta^\varepsilon_n)}^2\ind{s_{i+1}\in [\tau^\varepsilon_n,\theta^\varepsilon_n]},
\end{align*}
so that
\begin{align*}
&J\pare{{\mathcal T}}\leq 2\sum_n \sum_{i\in T}\pare{f_{i(s_{i+1}\wedge \theta_n)}(x(s_{i+1}\wedge \theta^\varepsilon_n)) - f(0)}^2\ind{i(s_{i+1}\wedge \theta^\varepsilon_n)\neq i(s_{i}\wedge \theta^\varepsilon_n)}\ind{s_{i+1}\in [\tau^\varepsilon_n,\theta^\varepsilon_n]}\\
&+ 2\sum_n \sum_{i\in T}\pare{f_{i(s_{i}\wedge \theta^\varepsilon_n)}(x(s_{i}\wedge \theta^\varepsilon_n)- f(0)}^2\ind{i(s_{i+1}\wedge \theta^\varepsilon_n)\neq i(s_{i}\wedge \theta^\varepsilon_n)}\ind{s_{i+1}\in [\tau^\varepsilon_n,\theta^\varepsilon_n]}\\
&+ \sum_n \sum_{i\in T}\pare{f_{i(s_{i+1}\wedge \theta^\varepsilon_n)}(x(s_{i+1}\wedge \theta^\varepsilon_n)) - f_{i(s_{i}\wedge \theta^\varepsilon_n)}(x(s_{i}\wedge \theta^\varepsilon_n)}^2\ind{i(s_{i+1}\wedge \theta^\varepsilon_n)= i(s_{i}\wedge \theta^\varepsilon_n)}\ind{s_{i+1}\in [\tau^\varepsilon_n,\theta^\varepsilon_n]}\\
&\leq C(f)^2\sum_n \sum_{i\in T}\pare{x(s_{i+1}\wedge \theta^\varepsilon_n) - x(s_{i}\wedge \theta^\varepsilon_n)}^2\ind{s_{i+1}\in [\tau^\varepsilon_n,\theta^\varepsilon_n]}.
\end{align*}
But 
\begin{align*}
\lim_{|T|\rightarrow 0}\sum_{i\in T}\pare{x(s_{i+1}\wedge \theta^\varepsilon_n)- x(s_{i}\wedge \theta^\varepsilon_n)}^2\ind{s_{i+1}\in [\tau^\varepsilon_n,\theta^\varepsilon_n]} = \int_{\tau^\varepsilon_n}^{\theta^\varepsilon_n}\sigma_{i(s)}^2(u,x(u),l(u))du
\end{align*} and
\begin{align*}
&\P^{x_\star,i_\star} -\lim_{|T|\rightarrow 0}\croc{\sum_n\sum_{i\in T}\pare{x(s_{i+1}\wedge \theta^\varepsilon_n) - x(s_{i}\wedge \theta^\varepsilon_n)}^2\ind{s_{i+1}\in [\tau^\varepsilon_n,\theta^\varepsilon_n]}}\\
&\hspace{1,5 cm}= \int_{0}^{t}\sigma_{i(s)}^2(u,x(u),l(u))\ind{x(u)\leq \varepsilon}du.
\end{align*}
So that 
\begin{align}
\label{eq:vq-difficile}
\langle \sum_n M^{1,n,\varepsilon}(f)\rangle_t &= \P^{x_\star,i_\star}-\limsup_{|T|\rightarrow 0}\croc{\sum_{i\in T}\pare{\sum_n [\Delta_n(f)]_{s_{i+1}} - [\Delta_n(f)]_{s_{i}}}^2}\nonumber\\
&\leq  C(f)^2\int_{0}^{t}\ind{x(u)\leq \varepsilon}du.
\end{align}
Thus, from the fact $[\tau_n, \theta_n)\cap[\theta_{n}, \tau_{n+1}]=\emptyset$, we have
\begin{align*}
&\langle {\mathcal M}(f)\rangle_t - \int_0^t \pare{\sigma_{i(s)}(s,x(s),l(s))\partial_{x}f_{i(s)}(x(s))}^2 ds\\
&\hspace{1,5 cm}= \langle\sum_{n\geq 0} M^{1,n,\varepsilon}(f) + \sum_{n\geq 0} M^{2,n,\varepsilon}(f)\rangle_t - \int_0^t \pare{\sigma_{i(s)}(s,x(s),l(s))\partial_{x}f_{i(s)}(x(s))}^2 ds\\
&\hspace{1,5 cm}= \langle\sum_{n\geq 0} M^{1,n,\varepsilon}(f)\rangle_t + \langle \sum_{n\geq 0} M^{2,n,\varepsilon}(f)\rangle_t - \int_0^t \pare{\sigma_{i(s)}(s,x(s),l(s))\partial_{x}f_{i(s)}(x(s))}^2 ds
\end{align*}
and from \eqref{eq:vq-facile} and \eqref{eq:vq-difficile} we deduce
\begin{align*}
&\left |\langle {\mathcal M}(f)\rangle_t - \int_0^t \pare{\sigma_{i(s)}(s,x(s),l(s))\partial_{x}f_{i(s)}(x(s))}^2 ds\right |\\
&\hspace{1,5 cm}\leq C(f)^2\int_{0}^{t}\ind{x(u)\leq \varepsilon}du+ \left |\sum_{n\geq 0}\int_{t\wedge\theta_n^\varepsilon}^{t\wedge\tau_{n+1}^\varepsilon}\pare{\sigma_{i(s)}(s,x(s),l(s))\partial_{x}f_{i(s)}(x(s))}^2 ds\right .\\
&\hspace{2.0 cm}\left .- \int_0^t \pare{\sigma_{i(s)}(s,x(s),l(s))\partial_{x}f_{i(s)}(x(s))}^2 ds\right |\\
&\hspace{2.0 cm}= C(f)^2\int_{0}^{t}\ind{x(u)\leq \varepsilon}du + \left | \sum_{n\geq 0}\int_{t\wedge\tau_n^\varepsilon}^{t\wedge\theta_{n}^\varepsilon}\pare{\sigma_{i(s)}(s,x(s),l(s))\partial_{x}f_{i(s)}(x(s))}^2 ds\right |\\
&\hspace{1,5 cm}\leq 2C(f)^2\int_{0}^{t}\ind{x(u)\leq \varepsilon}du.
\end{align*}
Sending $\varepsilon$ to zero and using the non-stickiness condition, we prove that the quadratic variation of $\pare{{\mathcal M}(f)}_{t\in [0,T]}$ is given for all $t\in [0,T]$ by
\begin{equation}
\label{eq:quadratic-variation-m}
\langle {\mathcal M}(f)\rangle_t = \ds \int_0^t \pare{\sigma_{i(s)}(s,x(s),l(s))\partial_{x}f_{i(s)}(x(s))}^2 ds,
\end{equation}
which concludes our first step.

\medskip

{\bf Step 2}

Set $t\mapsto m_t:=\ds x(t) - x_\star - \int_0^t b_{i(u)}(u,x(u),l(u))du - l(t) = \ds\int_{0}^{t}\sigma_{i(s)}(s,x(s),l(s))dW(s)$.

Observe that $\{m_t, t\in [0,T]\}$ is an $({\mathcal F}_t)$-martingale.
\[
\langle m\rangle_t = \int_0^t \sigma^2_{i(s)}(s,x(s),l(s))dW(s) = \langle x\rangle_t.
\]
Again with similar computations as in {\bf Step 1} and using the non-stickiness condition, we prove the following equality
\begin{equation}
\label{eq:quadratic-variation-m0}
\langle m, {\mathcal M}(f)\rangle_t = \langle x, {\mathcal M}(f)\rangle_t = \int_0^t \sigma_{i(s)}(s,x(s),l(s))\partial_{x}f_{i(s)}(x(s)) ds
\end{equation}
holding for any $t\in [0,T]$ and which gives the mutual-quadratic variation between $(m_t)_{t\in [0,T]}$ and $\pare{{\mathcal M}_t}_{t\in [0,T]}$.

Since the computations are very similar to {\bf Step 1} we only sketch the argument without getting into details.

Using the same notations as {\bf Step 1}, we have 
\begin{align*}
\langle m, {\mathcal M}(f)\rangle_t &= \langle \sum_{n\geq 0} M^{1,n,\varepsilon}(\id) + \sum_{n\geq 0} M^{2,n,\varepsilon}(\id),\sum_{n\geq 0} M^{1,n,\varepsilon}(f) + \sum_{n\geq 0} M^{2,n,\varepsilon}(f)\rangle_t\\
&= \langle \sum_{n\geq 0} M^{1,n,\varepsilon}(\id), \sum_{n\geq 0} M^{1,n,\varepsilon}(f)\rangle_t + \langle\sum_{n\geq 0} M^{2,n,\varepsilon}(\id), \sum_{n\geq 0} M^{2,n,\varepsilon}(f)\rangle_t\\
&=\sum_{n\geq 0} \langle M^{1,n,\varepsilon}(\id), M^{1,n,\varepsilon}(f)\rangle_t + \sum_{n\geq 0}\langle M^{2,n,\varepsilon}(\id), M^{2,n,\varepsilon}(f)\rangle_t.
\end{align*}

From the classical It\^o' formula applied to $f_j$ and $(x(t))$ we find
\begin{equation}
\langle M^{2,n,\varepsilon}(\id), M^{2,n,\varepsilon}(f)\rangle_t = \int_{t\wedge\theta_n^\varepsilon}^{t\wedge\tau_{n+1}^\varepsilon}\sigma^2_{i(s)}(s,x(s),l(s))\partial_{x}f_{i(s)}(x(s))ds,
\end{equation}
whereas we have the estimate
\begin{align}
\label{eq:vq-difficile}
 &\sum_n \langle M^{1,n,\varepsilon}(\id), M^{1,n,\varepsilon}(f) \rangle_t\nonumber\\
 &= \P^{x_\star,i_\star}-\limsup_{|T|\rightarrow 0}\croc{\sum_{i\in T}\pare{\sum_n \pare{[\Delta_n(f)]_{s_{i+1}} - [\Delta_n(f)]_{s_{i}}}}\pare{\pare{[\Delta_n(\id)]_{s_{i+1}} - [\Delta_n(\id)]_{s_{i}}}}}\nonumber\\
&\leq  C(f)\int_{0}^{t}\ind{x(u)\leq \varepsilon}du.\nonumber
\end{align}
Hence, we deduce
\begin{align*}
&\left |\langle m, {\mathcal M}(f)\rangle_t - \int_0^t \sigma^2_{i(s)}(s,x(s),l(s))\partial_{x}f_{i(s)}(x(s)) ds\right |\leq 2C(f)\int_{0}^{t}\ind{x(u)\leq \varepsilon}du.
\end{align*}
Sending $\varepsilon$ to zero and using the non-stickiness condition, we deduce \eqref{eq:quadratic-variation-m0}.
\medskip

{\bf Step 3}

Set 
\[[Q_t(f)(m, {\mathcal M})]:=
\pare{\begin{array}{cc}
\langle m\rangle_t&\langle m, {\mathcal M}(f)\rangle_t\\
\langle m, {\mathcal M}(f)\rangle_t &\langle{\mathcal M}(f)\rangle_t
\end{array}}
\]
together with $\Theta_t(f):=\pare{
\partial_x f(x(t)), -1}$.

Since both $(m_t)_{t\in [0,T]}$ and $\pare{{\mathcal M}_t(f)}_{t\in [0,T]}$ are $\pare{{\mathcal F}_t}$-martingales with bounded quadratic variation, we have that for any $\lambda\in \R$
\begin{eqnarray*}
&\ds t\mapsto {\mathcal E}_t(f)\!\!:=\exp\Big(\lambda \int_0^t <\Theta_s(f), d(m_s, {\mathcal M}_s(f))> -\\
& \ds  \frac{\lambda^2}{2}\int_0^t <\Theta_s(f),[dQ_s(f)(m, {\mathcal M})]\Theta_s(f)>\Big)
\end{eqnarray*}
defines an exponential $\pare{{\mathcal F}_t}$-martingale.

Now observe from the definition and from the results of Steps 1 and 2 that \[<\Theta(f),\;[Q(f)(m, {\mathcal M})]\Theta(f)>\equiv 0.\] From the martingale property written for $\pare{{\mathcal E}_t(f)}_{t\in [0,T]}$, this in turn implies that 
\[\int_0^.<\Theta_s(f), d(m_s, {\mathcal M}_s(f))>\equiv 0.\]
Remembering the definition of $(m_t)_{t\in [0,T]}$ and making use of \eqref{eq : diff x}, this last equality is nothing but a rewriting of It\^o's formula for $f\in\mathcal{C}^{2}_b(\mathcal{J})$.
\medskip

{\bf Step 4} Extension to $f\in \mathcal{C}^{1,2,1}_b(\mathcal{J}_T\times[0+\infty)])$.

Let $f\in\mathcal{C}^{1,2,1}_b([0,T]\times \mathcal{J}\times \R^+;\R)$ with product form $f(t,x,l) = t^k\,h(x)\,l^m$. Applying the classical It\^o formula to $f$, since the local time only increases at times $s$ where $x(s)=0$ we have
\begin{align}
\label{eq:Ito-formula}
&f_{i(t)}(t,x(t),l(t))- f_{i_\star}(0,x_\star,0)\displaystyle\nonumber\\
&=\int_0^t ks^{k-1}h(x(s))l(s)^m ds + \int_0^t s^{k}l(s)^m\,d[h(x(s))] +\int_0^t m s^{k}h(x(0))l^{m-1}(s)dl(s) 
\end{align} 
and using the differential of $h(x(s))$ deduced from {\bf Step 3}, we get \eqref{eq:Ito-formula 00} for such $f$.

From this ascertainment, the remaining arguments are routine. We refer for e.g. to \cite{Revuz-Yor} Chapter IV Proof of Theorem (3.3) for an exposition of these.
\end{proof}

\section{Problem 2 - Absolute continuity}
In this section we apply the ideas of \cite{Fournier-Printems} and prove the absolute continuity of the law of $x(t)$ w.r.t. Lebesgue's measure. 

\subsection{Preliminaries}
Let us apply the previous It\^o's formula to the following multifunction
\[
(f_1, \dots, f_I)~:~(t,x,l)\mapsto \pare{-\int_0^x\frac{dy}{\sigma_1(t,y,l)},\,\dots,\,-\int_0^x\frac{dy}{\sigma_I(t,y,l)}}.
\]
We obtain the differentiation
\begin{align}
\label{eq:Ito-formula1}
&f_{i(t)}(t,x(t),l(t))\displaystyle\nonumber\\
&=\displaystyle f_{i_\star}(0,x_\star,0)\,-\,W(t)\nonumber\\
&\;\;\;\displaystyle + \int_{0}^{t}\int_0^{x(u)}\frac{\partial_u{\sigma_{i(u)}}(u,y,l(u))}{\sigma^2_{i(u)}(u,y,l(u))}dy\,du - \int_{0}^{t}\frac{b_{i(u)}(u,x(u),l(u))}{\sigma_{i(u)}(u,x(u),l(u))}du\nonumber\\
&\;\;\;\displaystyle +\frac{1}{2}\int_{0}^{t}\partial_{x}\sigma_{i(u)}(u,x(u),l(u))du\nonumber\;+\;\displaystyle\sum_{j=1}^I\int_{0}^{t}\frac{\alpha_j(u,l(u))}{\sigma_{j}(u,0+,l(u))}dl(u)\\
&:=y_0 - W(t) + \int_0^t h_{i(u)}(u,x(u),l(u))du + \ell(t)
\end{align} 
holding $\P^{x_\star,i_\star}-\text{ a.s.}$ for any $t\in[0,T]$. Above we introduced the notations
\begin{align*}
h_{i}(t,x,l) &:= \int_0^{x}\frac{\partial_t{\sigma_{i}}(t,y,l)}{\sigma^2_{i}(t,y,l)}dy -\frac{b_{i}(t,x,l)}{\sigma_{i}(t,x,l)}\displaystyle +\frac{1}{2}\partial_{x}\sigma_{i}(t,x,l)\,\,\,(i\in \{1,\dots, I\})\\
&\text{and}\\
\ell(t)&:=\sum_{j=1}^I\int_{0}^{t}\frac{\alpha_j(u,l(u))}{\sigma_{j}(u,0+,l(u))}dl(u),\;\;\;\;y_0:=f_{i_\star}(0,x_\star,0).
\end{align*}

Observe that under our assumptions there exists $C>0$ s.t.
\begin{align*}
|h^2_{i}(t,x,l)| \leq C(1+x^2).
\end{align*}

\subsection{No atom at zero}
Let us consider the random variable 
\[Y(t):=\displaystyle y_0 - W(t) + \int_0^t h_{i(u)}(u,x(u),l(u))du.\]

We would like to the results of \cite{Fournier-Printems} on the regularity of the law of $Y(t)$ w.r.t. Lebesgue's measure. Note that we are not exactly in the case of application of Theorem 3.1 of Fournier-Printems \cite{Fournier-Printems}: indeed -- due to the presence of the chaotic $u\mapsto i(u)$ in the drift coefficient $u\mapsto h_{i(u)}(u,x(u),l(u))$ -- the random variable $Y(t)$ does not quite meet the conditions of the theorem. However, $u\mapsto h_{i(u)}(u,x(u),l(u))$ has all the good integrability properties needed and we prove that, as long as $t>0$, the law of the random variable $Y(t)$ possesses a density. 

Let us be a bit more specific, by a scaling argument, we can reduce to the case $t=1$. 

Let $Z_\varepsilon := y_0 - W(1-\varepsilon)$.
\begin{align*}
\displaystyle\E\pare{(Y(1) - Z_\varepsilon)^2}&\leq 2\varepsilon^2 + \displaystyle2\E\pare{\int_{1-\varepsilon}^1 h(i(u),u,x(u),l(u))du}^2\\
&\leq 2\varepsilon^2 + \displaystyle2\varepsilon\E\pare{\int_{1-\varepsilon}^1 h^2(i(u),u,x(u),l(u))du}\\
&\leq 2\varepsilon^2 + \displaystyle C\varepsilon\int_{1-\varepsilon}^1 \E(1 + \sup_{u\in [0,1]}|x(u)|^2)du\\
&\leq C\varepsilon^2.
\end{align*}

We can then proceed as in Fournier-Pintems \cite{Fournier-Printems} to prove that ${\mathcal L}\pare{Y(1)}$ possesses a density w.r.t Lebesgue's measure, in particular it does not have an atom at zero.
Since $\ell(1)\geq 0$, $\P^{x_\star,i_\star}-\text{ a.s.}$, this property extends to the law of $Y(1) + \ell(1) = f_{i(1)}(t,x(1),l(1))$ which also does not have an atom at zero. But taking a closer look at the definition of $(f_i)_{i\in \{1,\dots, I\}}$ this means that $\mathcal{L}(x(1))$ itself cannot have an atom at zero. This conclusion generalizes to $\mathcal{L}(x(t))$ for any $t>0$.
\subsection{Existence of a density}
We observe that $s\mapsto \ell(s)$ defined above increases only on the zero set $\{s\in [0,T]:x(s)=0\}$ of $(x(s))_{s\in [0,T]}$, which is also the zero set of $(f_{i(s)}(t,x(s),l(s)) = Y(s) + \ell(s))_{s\in [0,T]}$: in particular 
$$
\int_0^t (Y(s) + \ell(s))d\ell(s) = 0,\;\;\;\;\P^{y_0}-a.s.
$$
Hence, we have
\begin{align*}
f^{2}_{i(t)}(t,x(t),l(t))&=(Y(t) + \ell(t))^2\\
&=f^{2}_{i(0)}(t,x(0),l(0)) + 2\int_0^t (Y(s) + \ell(s))dY(s) \\
&+ 2\int_0^t (Y(s) + \ell(s))d\ell(s) + t\\
&=f^{2}_{i(0)}(t,x(0),l(0)) + 2\int_0^t (Y(s) + \ell(s))dW(s)\\
&+ 2\int_0^t (Y(s) + \ell(s))h_{i(s)}(s,x(s),l(s))ds + t.
\end{align*}
Consequently, we see that $\{V(s):=f^{2}_{i(s)}(s,x(s),l(s)) : s\in [0,T]\}$ satisfies
an SDE with random coefficients that writes
\begin{align*}
V(t) = V(0) + \int_0^t 2\sqrt{V(s)}dW(s) + \int_0^t 2\sqrt{V(s)}h_{i(s)}(s,x(s),l(s))ds + t.
\end{align*}

By using the method of Fournier-Printems \cite{Fournier-Printems} we then prove that $V(t)$ possesses a density on $\R^+\setminus \{0\}$ and with a possible atom at $\{0\}$. But this ladder case is excluded because of the discussion of the previous paragraph.

\section{Problem 3 - Feynman-Kac representation}
The main purpose of this section, is to give a probabilistic representation (Feynman-Kac's formula) of some solutions for backward parabolic system posed on star-shaped networks - having the local time Kirchhoff's boundary transmission - introduced in our PDE contribution \cite{Martinez-Ohavi EDP}.

In the whole section, \textbf{we work under assumption ($\mathcal{H}$)}. We fix a terminal condition $T>0$, and:
$$(t,x,i,l)\in[0,T)\times \mathcal{J}\times[0+\infty).$$
We introduce the unique probability measure $\P_t^{x,i,l}$ defined on the measurable canonical space $(\Phi,\mathbb{B}(\Phi))$, solution of the spider martingale problem $\big(\mathcal{S}_{pi}-\mathcal{M}_{ar}\big)$, stated in Corollary \ref{cr: exis Spider}.
Furthermore, we introduce the following data:
$$\begin{cases}
\Big(h_i \in \mathcal{C}\big([0,T]\times [0,+\infty)^2,\R\big)\Big)_{i\in[\![1,I]\!]}\\
h_0 \in \mathcal{C}\big([0,T]\times [0,+\infty),\R\big)\\
g\in \mathcal{C}\big( \mathcal{J} \times [0,+\infty),\R\big)
\end{cases},
$$
satisfying the following assumption:
\begin{align*}
&\exists |h|\in (0,\infty),~~\forall i\in[\![1,I]\!]:~~\sup \Big\{|h_i(t,x,l)|+\frac{|h_i(t,x,l)-h_i(t,x,l)|}{|t-s|+|x-y|+|l-q|},\\
&~~(t,s,x,y,l,q)\in [0,T]^2\times [0,+\infty)^4,~~t\neq s,x\neq y,l\neq q\Big\}\leq |h|,\\
&\sup \Big\{|h_0(t,l)|+\frac{|h_0(t,l)-h_0(t,l)|}{|t-s|+|l-q|},~~(t,s,l,q)\in [0,T]^2\times [0,+\infty)^2,~~t\neq s,l\neq q\Big\}\leq |h|.
\end{align*}
For all $i\in[\![1,I]\!]$, the terminal condition $(x,l)\mapsto g_i(x,l)$ belongs to $ \mathcal{C}^{2,0}_b\big((0,+\infty)^2,\R\big)$, whereas the map $l\mapsto g(0,l)\in\mathcal{C}^1_b\big([0,+\infty),\R\big)$, and the following compatibility condition holds true:
\begin{align*}
&\partial_lg(0,l)+\sum_{i=1}^I\alpha_i(T,l)\partial_xg_i(0,l)+h_0(T,l)=0,~~l\in[0,+\infty).
\end{align*}
Let us recall the class of regularity required for the solutions of the PDE systems having {\it local time Kirchhoff's boundary condition at the vertex}, introduced in \cite{Martinez-Ohavi EDP} for the first time (see Definition 2.1) for a bounded star-shaped network, and extended to unbounded domain in \cite{Martinez-Ohavi Walsh} (see Definition 6.1). 
\begin{Definition} (of the class $\mathfrak{C}^{1,2,0}_{\{0\},b} \big(\mathcal{J}_T\times[0,+\infty)\big)$ \label{def: class regula EDP}.) We say that
$$f:=\begin{cases} \mathcal{J}_T\times[0,+\infty)\to \R,\\
\big(t,(x,i),l\big)\mapsto f_i(t,x,l)
\end{cases}.$$
is in the class $f\in \mathfrak{C}^{1,2,0}_{\{0\},b} \big(\mathcal{J}_T\times[0,+\infty)\big)$ if\\
(i) the following continuity condition holds at the junction point $\bf 0$: \\for all $(t,l)\in[0,T]\times[0,+\infty)$, for all $(i,j)\in [I]^2$, $f_i(t,0,l)=f_j(t,0,l)=f(t,0,l)$;\\
(ii) for all $i\in [I]$, the map $(t,x,l)\mapsto f_i(t,x,l)$ has a regularity in the class\\ $\mathcal{C}^{0,1,0}_b\big([0,T]\times [0,+\infty)^2,\R \big)$;\\
(iii) for all $i\in [I]$, the map $(t,x,l)\mapsto f_i(t,x,l)$ has regularity in the interior of each ray $\mathcal{R}_i$ in the class $\mathcal{C}^{1,2,0}_b\big((0,T)\times (0,+\infty)^2,\R \big)$;\\
(iv) at the junction point $\bf 0$, the map $(t,l)\mapsto f(t,0,l)$ has a regularity in the class\\ $\mathcal{C}^{0,1}_b\big([0,T]\times [0,+\infty),\R \big)$;\\
(v) for all $i\in [I]$, on each ray $\mathcal{R}_i$, $f$ admits a generalized locally integrable derivative with respect to the variable $l$ in $\displaystyle \bigcap\limits_{q\in (1,+\infty)}\!L^q_{loc}\big((0,T)\times (0,+\infty)^2\big)$.
\end{Definition}
Now we state the main result of this section, that reads:
\begin{Theorem}\label{th : exis para with l bord infini}
The unique solution of the following backward linear parabolic system involving a {\it local time Kirchhoff's transmission condition} posed on the domain $\mathcal{J}_T\times[0,+\infty)$:
\begin{eqnarray}\label{eq : EDP Feynmann Kac}
\begin{cases}&\partial_tu_i(t,x,l)+\ds \frac{1}{2}\sigma^2_i(t,x,l)\partial_x^2u_i(t,x,l)\\
&\hspace{2,0 cm}+b_i(t,x,l)\partial_xu_i(t,x,l)+h_i(t,x,l)=0,\,(t,x,l)\in (0,T)\times (0,+\infty)^2,\\
&\partial_lu(t,0,l)+\displaystyle \sum_{i=1}^I \alpha_i(t,l)\partial_xu_i(t,0,l)+h_0(t,l)=0,~~(t,l)\in(0,T)\times(0,+\infty),\\
&\forall (i,j)\in[I]^2,~~u_i(t,0,l)=u_j(t,0,l)=u(t,0,l),~~(t,l)\in[0,T]\times[0,+\infty)^2,\\
&\forall i\in[I],~~ u_i(T,x,l)=g_i(x,l),~~(x,l)\in[0,+\infty)^2,
\end{cases}
\end{eqnarray}
is given for all $ (t,x,l)\in[0,T]\times[0,+\infty)^2,~\forall  i\in[I]$, by:
\begin{align*}
u_i(t,x,l)=\mathbb{E}^{\P_t^{x,i,l}}\Big[\int_t^Th_{i(s)}\big(s,x(s),l(s)\big)ds+\int_t^Th_{0}\big(s,l(s)\big)dl(s)+g_{i(T)}\big(x(T),l(T)\big)\Big].  \end{align*}
\end{Theorem}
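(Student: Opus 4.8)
The plan is to verify that the candidate function $v_i(t,x,l) := \mathbb{E}^{\P_t^{x,i,l}}\big[\int_t^T h_{i(s)}(s,x(s),l(s))\,ds + \int_t^T h_0(s,l(s))\,dl(s) + g_{i(T)}(x(T),l(T))\big]$ coincides with the unique solution $u$ of the parabolic system \eqref{eq : EDP Feynmann Kac}, rather than trying to prove directly that $v$ solves the PDE. Since \cite{Martinez-Ohavi EDP} (together with its extension to unbounded domains in \cite{Martinez-Ohavi Walsh}) guarantees the \emph{existence and uniqueness} of a solution $u \in \mathfrak{C}^{1,2,0}_{\{0\},b}\big(\mathcal{J}_T\times[0,+\infty)\big)$, it suffices to show that $u(t,x,i,l) = v(t,x,i,l)$; then $v$ inherits all the regularity and is \emph{a fortiori} a representation of the solution.

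First I would take the known solution $u$ of \eqref{eq : EDP Feynmann Kac} and apply the It\^o's formula of Theorem \ref{th It\^o's formula} (in its time-shifted form under $\P_t^{x,i,l}$, i.e.\ the version underlying Corollary \ref{cr: exis Spider}) to the process $s \mapsto u_{i(s)}(s,x(s),l(s))$ on $[t,T]$. Strictly speaking, Theorem \ref{th It\^o's formula} is stated for $f\in\mathcal{C}^{1,2,1}_b(\mathcal{J}_T\times[0+\infty))$, while $u$ only lies in the weaker class $\mathfrak{C}^{1,2,0}_{\{0\},b}$ (no continuity of $\partial_l f$ away from $\bf 0$, only $L^q_{loc}$ generalized $l$-derivative); so the first genuine task is a mollification/approximation argument — approximate $u$ by smooth functions $u^{(n)}$, apply It\^o to each $u^{(n)}$, and pass to the limit. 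Here the absolute-continuity results of Section 5 (the law of $x(s)$, hence of $(x(s),l(s))$, has no atoms and — on each edge — a Lebesgue density) are exactly what is needed to control the $L^q_{loc}$ terms: the set where the generalized $l$-derivative is only defined a.e.\ is not charged in a way that matters, because $dl$ is carried by $\{x=0\}$ where the class $\mathfrak{C}^{1,2,0}_{\{0\},b}$ \emph{does} require genuine $\mathcal{C}^{0,1}$ regularity of $(t,l)\mapsto u(t,0,l)$, and the $ds$-integral of $\partial_{xx}u$ only sees the interior of the edges where the density is absolutely continuous.

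Once It\^o is applied, the PDE structure makes the finite-variation part collapse: the interior equation $\partial_t u_i + \tfrac12\sigma_i^2\partial_{xx}^2 u_i + b_i\partial_x u_i + h_i = 0$ kills the $ds$-integrand up to the term $-\int_t^T h_{i(s)}(s,x(s),l(s))\,ds$, while the local-time Kirchhoff condition $\partial_l u(s,0,l(s)) + \sum_j \alpha_j(s,l(s))\partial_x u_j(s,0,l(s)) + h_0(s,l(s)) = 0$ — valid $dl(s)$-a.e.\ since $l$ increases only on $\{x=0\}$, where the continuity condition (i) lets us write $u$ for all $u_j$ — kills the $dl$-integrand up to $-\int_t^T h_0(s,l(s))\,dl(s)$. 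What remains is $u_{i(T)}(T,x(T),l(T)) - u_i(t,x,l) = \text{(martingale part)} + \text{(those two leftover integrals with a minus sign)}$. Substituting the terminal condition $u_i(T,\cdot,\cdot) = g_i$, rearranging, and taking $\mathbb{E}^{\P_t^{x,i,l}}$ (the stochastic integral $\int \sigma\,\partial_x u\, dW$ is a true martingale with zero mean by boundedness of $\sigma$ and of $\partial_x u$ on the support of the occupation measure, again using Section 5) yields precisely $u_i(t,x,l) = v_i(t,x,l)$.

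The main obstacle I anticipate is the approximation step justifying the use of It\^o's formula for functions only in $\mathfrak{C}^{1,2,0}_{\{0\},b}$: one must mollify $u$ in a way compatible with the junction continuity condition and with the one-sided nature of the $l$-variable, check that $\partial_x u^{(n)}_i(s,0,l) \to \partial_x u_i(s,0,l)$ and $\partial_l u^{(n)}(s,0,l) \to \partial_l u(s,0,l)$ well enough to pass to the limit in the $dl$-integral (which requires uniform control near $x=0$, the delicate region), and handle the $L^q_{loc}$-only $\partial_l u$ inside the edges via the no-atom/density estimates. A secondary, more routine point is verifying integrability of the candidate $v$ and the martingale (not just local martingale) property of the stochastic integral, both of which follow from the boundedness assumptions in $(\mathcal{H})$ on $\sigma_i, b_i$, the Lipschitz/bounded assumptions on $h_i, h_0, g_i$, and standard moment bounds on $\sup_{s\le T}|x(s)|$ and on $l(T)$ coming from \eqref{eq : diff x}.
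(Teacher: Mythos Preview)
Your overall strategy is correct and coincides with the paper's: take the unique solution $u$ of \eqref{eq : EDP Feynmann Kac}, apply the dynamics to $s\mapsto u_{i(s)}(s,x(s),l(s))$ on $[t,T]$, use the PDE and the Kirchhoff condition to cancel the drift and $dl$-integrands, impose the terminal condition, and take expectations.

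However, the ``main obstacle'' you isolate --- extending It\^o's formula from $\mathcal{C}^{1,2,1}_b$ to the weaker class $\mathfrak{C}^{1,2,0}_{\{0\},b}$ via mollification --- is not something you need to carry out here. The paper simply cites Proposition~6.4 of the companion paper \cite{Martinez-Ohavi Walsh}, which already establishes that the martingale property in Corollary~\ref{cr: exis Spider}~(iii) extends to test functions in $\mathfrak{C}^{1,2,0}_{\{0\},b}\big(\mathcal{J}_T\times[0,+\infty)\big)$. Once that is invoked, the proof is a two-line computation: write the expectation of the martingale increment between $t$ and $T$, substitute the PDE and boundary condition, and read off the representation. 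In particular, your detour through the absolute-continuity results of Section~4 (Problem~2) is unnecessary --- neither the approximation step nor the true-martingale property require them, since the process in \eqref{eq:def-V} is a genuine $(\Psi_s)$-martingale by the very definition of $\P_t^{x,i,l}$, not merely a local one.
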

\begin{proof}
Recall that the class of test function $\mathcal{C}_b^{1,2,1}(\mathcal{J}_T\times[0+\infty))$ used for the statement of our martingale problem $\big(\mathcal{S}_{pi}-\mathcal{M}_{ar}\big)$, is stronger than the class $\mathfrak{C}^{1,2,0}_{\{0\},b} \big(\mathcal{J}_T\times[0,+\infty)\big)$. However, we have managed to prove that the martingale property given in Theorem \ref{th: exis Spider} and therefore also in Corollary \ref{cr: exis Spider}, extends further to any test function in the class $\mathfrak{C}^{1,2,0}_{\{0\},b} \big(\mathcal{J}_T\times[0,+\infty)\big)$ (see Proposition 6.4 in \cite{Martinez-Ohavi Walsh}.) Let then: $$\Biggl(\Phi,\mathbb{B}(\Phi),(\Psi_s)_{0 \leq s \leq T}, \Big(x(\cdot),i(\cdot),l(\cdot)\Big),\P_t^{x,i,l}\Biggl)$$
be the weak solution solution of the spider martingale problem $\big(\mathcal{S}_{pi}-\mathcal{M}_{ar}\big)$, given in Theorem \ref{th: exis Spider}, starting at time $t$ at the point $(x,i,l)$. Applying the martingale property to the unique solution $u$ of system \eqref{eq : EDP Feynmann Kac}, between time $t$ and $T$, we obtain:
\begin{align*}
&\mathbb{E}^{\P_t^{x,i,l}}\Big[u_{i(T)}\big(T,x(T),l(T)\big)\Big]=u_i(t,x,l)\\
&+\mathbb{E}^{\P_t^{x,i,l}}\Big[~~\displaystyle\int_{t}^{T}\Big(\partial_tu_{i(s)}(s,x(s),l(s))+\displaystyle\frac{1}{2}\sigma_{i(u)}^2(s,x(s),l(s))\partial_{x}^2u_{i(s)}(s,x(s),l(s))\\
&+b_{i(s)}(s,x(s),l(s))\partial_xu_{i(s)}(s,x(s),l(s))\Big)ds\\
&+\ds\int_{t}^{T}\Big(\partial_lu(s,0,l(s))+\displaystyle\sum_{j=0}^{I}\alpha_j(s,l(s))\partial_{x}u_{i}(s,0,l(s))\Big)dl(s)~~\Big]\\
&\ds =u_i(t,x,l)-\mathbb{E}^{\P_t^{x,i,l}}\Big[\int_t^Th_{i(s)}\big(s,x(s),l(s)\big)ds+\int_t^Th_{0}\big(s,l(s)\big)dl(s)\Big]\\
&+\mathbb{E}^{\P_t^{x,i,l}}\Big[~~\displaystyle\int_{t}^{T}\Big(\partial_tu_{i(s)}(s,x(s),l(s))+\displaystyle\frac{1}{2}\sigma_{i(u)}^2(s,x(s),l(s))\partial_{x}^2u_{i(s)}(s,x(s),l(s))\\
&+b_{i(s)}(s,x(s),l(s))\partial_xu_{i(s)}(s,x(s),l(s))+h_{i(s)}\big(s,x(s),l(s)\big)\Big)ds\\
&+\ds\int_{t}^{T}\Big(\partial_lu(s,0,l(s))+\displaystyle\sum_{j=0}^{I}\alpha_j(s,l(s))\partial_{x}u_{i}(s,0,l(s))+h_{0}\big(s,l(s)\big)\Big)dl(s)~~\Big].
\end{align*} 
Using now the PDE system \eqref{eq : EDP Feynmann Kac} satisfied by the unique solution $u$; together with the terminal condition, it follows that for all $ (t,x,l)\in[0,T]\times[0,+\infty)^2,~\forall  i\in[I]$:
 \begin{align*}
u_i(t,x,l)=\mathbb{E}^{\P_t^{x,i,l}}\Big[\int_t^Th_{i(s)}\big(s,x(s),l(s)\big)ds+\int_t^Th_{0}\big(s,l(s)\big)dl(s)+g_{i(T)}\big(x(T),l(T)\big)\Big].  \end{align*} 
\end{proof}
\begin{Remark}\label{rm Feynman Kac elliptique}
We could in an analogous way formulate corresponding Feynman-Kac's type representations in the elliptical framework. 
\end{Remark}

\section{Problem 4 - Quadratic approximations of the local time}\label{sec approx quadra temps loc}

In this section we obtain two types of approximation for the local time process $l(\cdot)$. The main key will be once again the use of the {\it non stickiness} condition that reads for the unique solution $\P^{x_\star,i_\star}$ of our spider martingale problem $\big(\mathcal{S}_{pi}-\mathcal{M}_{ar}\big)$ (given in Theorem \ref{th: exis Spider})~:
\begin{equation}\label{eq : maj zero x}
\forall \varepsilon>0,~~\mathbb{E}^{\P^{x_\star,i_\star}}\Big[~~\int_{0}^{T}\ind{x(u)\leq \varepsilon}ds~~\Big]~~\leq~~C\varepsilon,
\end{equation}
where $C>0$ is an uniform constant, depending on the data of the system introduced in assumption $(\mathcal{H})$, (see Proposition 5.2 in \cite{Martinez-Ohavi Walsh}). 

The next proposition extends - one of - the results obtained originally by Paul Levy for a reflected simple one dimensional Brownian motion, to our Walsh's spider process constructed in \cite{Martinez-Ohavi Walsh}. This approximation is generally called in literature : {\it the "Downcrossing representation of the local time"}, see Theorem 2.23, Chapter VI in \cite{Karatzas Book} (P. Levy's theory of the Brownian local time), for a reference and on this subject.
\begin{Proposition}\label{pr : approx int temps local}
Assume $x(0)=x_\star>0$, and fix $\varepsilon>0$. Define the following sequence of stopping times, that characterize the excursions of the process $x(\cdot)$ around the junction point $\bf 0$, having a (small) length $\varepsilon$:
\begin{eqnarray*}
&\tau_0^\varepsilon=0;\\
&\text{ and recursively for } n\ge 1 ;\\
&\theta_1^\varepsilon:=\inf\{t>\tau_0^\varepsilon,~~x(s)=\varepsilon\};\\
&\tau_1^\varepsilon=\inf\{ t> \theta_1^\varepsilon,~~x(s)=0\};\\
&..................\\
&\theta_n^\varepsilon:= \inf\{t> \tau_{n-1}^\varepsilon,~~x(s)=\varepsilon\};\\
&\tau_{n}^\varepsilon:= \inf\{t>\theta_{n}^\varepsilon,~~x(s)=0\};\\
&..................
\end{eqnarray*}
If we define  $\forall \varepsilon > 0,~\forall t\in [0,T]$, the following (right continuous with left limit) process $\mathcal{N}^\varepsilon(\cdot)$ by: 
\begin{align*}
\mathcal{N}^\varepsilon(t)&:=\text{all the number of times that } x(s): s < t \text{ crosses down from } \varepsilon \text{ to } 0,\\ 
&:=\sup\Big\{~~n\ge 1,~~\text{such that:}~~[\theta_n^\varepsilon, \tau_{n}^\varepsilon]\subset [0,t]~~\Big\},~~ \P^{x_\star,i_\star}~~\text{a.s},
\end{align*}
we will get: 
\begin{align}\label{eq appro quadra 1}
\forall t\in [0,T],~~\lim_{\varepsilon \searrow 0} \E^{\P^{x_\star,i_\star}}\Big[~~\Big|~\varepsilon \mathcal{N}^\varepsilon(t)-l(t)~\Big|~~\Big]=0.
\end{align}
Moreover, for any map $f$ belonging to the class $\mathfrak{C}^{1,2,0}_{\{0\},b} \big(\mathcal{J}_T\times[0,+\infty)\big)$ (defined in Definition \ref{def: class regula EDP}), we have $\forall t\in [0,T]$:
\begin{align}\label{eq appro quadra 2}
&\nonumber \lim_{\varepsilon \searrow 0} \E^{\P^{x_\star,i_\star}}\Big[~~\Big|~~\ds \sum_{n=1}^{\mathcal{N}^\varepsilon(t)} \Big(f_{i(\theta_{n+1}^\varepsilon )}(\theta_{n+1}^\varepsilon,x(\theta_{n+1}^\varepsilon),\ell(\theta_{n+1}^\varepsilon))-f_{i(\tau_{n}^\varepsilon)}(\tau_{n}^\varepsilon,x(\tau_{n}^\varepsilon),\ell(\tau_{n}^\varepsilon))\Big)-\\&\ds\int_0^{t}\Big(\partial_lf(s,0,l(s))+\ds \sum_{i=1}^I\alpha_i(s,l(s))\partial_xf_i(s,0,l(s))\Big)dl(s)~~\Big|\mathbf{1}_{\{\mathcal{N}^\varepsilon(t)\ge 1\}}~~\Big]=0.  
\end{align}
\end{Proposition}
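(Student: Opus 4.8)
The plan is to split the path of $x(\cdot)$ into its excursions around the vertex, exactly as in the proof of Theorem~\ref{th It\^o's formula}, and to feed the stochastic equation \eqref{eq : diff x} (for \eqref{eq appro quadra 1}) and It\^o's formula of Theorem~\ref{th It\^o's formula} (for \eqref{eq appro quadra 2}) into that decomposition. Fix $\varepsilon<x_\star$ (harmless since $\varepsilon\searrow0$). Then $\tau_1^\varepsilon$ is the first visit of $x(\cdot)$ to $0$; the local time $l(\cdot)$ is constant on $[0,\theta_1^\varepsilon]$ and on each ``downward'' interval $[\theta_n^\varepsilon,\tau_n^\varepsilon]$ (since $x(\cdot)>0$ on the left-closed part of such intervals), hence $l(\cdot)$ increases only on the ``upward'' intervals $[\tau_n^\varepsilon,\theta_{n+1}^\varepsilon]$, $n\ge1$, on which $0\le x(\cdot)<\varepsilon$. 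Writing $A^\varepsilon_t:=\bigcup_{n\ge1}[\tau_n^\varepsilon\wedge t,\,\theta_{n+1}^\varepsilon\wedge t)$, the indicator $\mathbf{1}_{A^\varepsilon_t}$ is predictable, $A^\varepsilon_t\subseteq\{u\le t:\,x(u)\le\varepsilon\}$, and $\mathcal{N}^\varepsilon(t)<\infty$ a.s.\ because each upward interval has length bounded below by a positive quantity controlled by the modulus of continuity of the (continuous) path.

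For \eqref{eq appro quadra 1} I would write $l(t)=\sum_{n\ge1}\big(l(\theta_{n+1}^\varepsilon\wedge t)-l(\tau_n^\varepsilon\wedge t)\big)$, a finite sum, and apply \eqref{eq : diff x} between the stopping times $\tau_n^\varepsilon\wedge t$ and $\theta_{n+1}^\varepsilon\wedge t$. For the contributing indices one has $x(\tau_n^\varepsilon\wedge t)=0$ and $x(\theta_{n+1}^\varepsilon\wedge t)=\varepsilon$, except for the last index $n=\mathcal{N}^\varepsilon(t)$ where $x(\theta_{n+1}^\varepsilon\wedge t)\in[0,\varepsilon)$; summing therefore gives
\[
l(t)=\varepsilon\,\mathcal{N}^\varepsilon(t)+\rho_\varepsilon-\int_{A^\varepsilon_t}b_{i(u)}(u,x(u),l(u))\,du-\int_{A^\varepsilon_t}\sigma_{i(u)}(u,x(u),l(u))\,dW(u),\qquad|\rho_\varepsilon|\le\varepsilon.
\]
By the It\^o isometry, the boundedness of $b$ and $\sigma$ under $(\mathcal{H})$, and the non-stickiness estimate \eqref{eq : maj zero x} applied on $[0,T]$, the two integrals over $A^\varepsilon_t$ have $L^1(\P^{x_\star,i_\star})$-norm at most $C\varepsilon$ and $(C\varepsilon)^{1/2}$ respectively; letting $\varepsilon\searrow0$ yields \eqref{eq appro quadra 1}.

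For \eqref{eq appro quadra 2} I would run the same decomposition but apply It\^o's formula of Theorem~\ref{th It\^o's formula} — which, as established in \cite{Martinez-Ohavi Walsh} (see Proposition~6.4), extends to test functions of the class $\mathfrak{C}^{1,2,0}_{\{0\},b}\big(\mathcal{J}_T\times[0,+\infty)\big)$, the second-order term being read on $\{x>0\}$ — to $f$ between $\tau_n^\varepsilon\wedge t$ and $\theta_{n+1}^\varepsilon\wedge t$. Summing over $n\le\mathcal{N}^\varepsilon(t)$ and using that $dl$ charges only $\{x=0\}\subseteq A^\varepsilon_t$, so that $\int_{A^\varepsilon_t}\big(\partial_l f+\sum_{i=1}^I\alpha_i\partial_x f_i\big)\,dl=\int_0^t\big(\partial_l f(u,0,l(u))+\sum_{i=1}^I\alpha_i(u,l(u))\partial_x f_i(u,0,l(u))\big)\,dl(u)$, the quantity inside $|\cdot|$ in \eqref{eq appro quadra 2} — with $\theta_{n+1}^\varepsilon$ temporarily read as $\theta_{n+1}^\varepsilon\wedge t$ — collapses to $\int_{A^\varepsilon_t}\sigma_{i(u)}\partial_x f_{i(u)}\,dW(u)+\int_{A^\varepsilon_t}\big(\partial_t f_{i(u)}+b_{i(u)}\partial_x f_{i(u)}+\frac12\sigma_{i(u)}^2\partial_{xx}f_{i(u)}\big)(u,x(u),l(u))\,du$. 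Both terms are supported on $\{x\le\varepsilon\}$, where the integrands are bounded ($f\in\mathcal{C}^{1,2,0}_b$ in the interior of each ray, $\partial_x f$ bounded up to the vertex, $b,\sigma$ bounded, and $\int_0^T\mathbf{1}_{\{x(u)=0\}}\,du=0$ by \eqref{eq : maj zero x}), so again their $L^1$-norms are $O(\sqrt\varepsilon)$; the discrepancy between $\theta_{n+1}^\varepsilon$ and $\theta_{n+1}^\varepsilon\wedge t$ occurs only in the incomplete final excursion and is $O(\sqrt\varepsilon)$ in $L^1$ by applying It\^o once more on the interval between $t$ and $\theta_{\mathcal{N}^\varepsilon(t)+1}^\varepsilon\wedge T$ (where $x\le\varepsilon$ and the local-time increment is $O(\sqrt\varepsilon)$ in $L^1$ by the previous paragraph), while the factor $\mathbf{1}_{\{\mathcal{N}^\varepsilon(t)\ge1\}}$ only removes the trivial case $l(t)=0$.

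The main obstacle is organisational rather than conceptual: one must keep careful track of which excursion intervals carry local time, of the predictability of $\mathbf{1}_{A^\varepsilon_t}$ (so that the summed stochastic contributions assemble into a single It\^o integral, to which the isometry applies directly), and of the incomplete last excursion; and one must be sure that It\^o's formula is legitimately available for the weaker regularity class $\mathfrak{C}^{1,2,0}_{\{0\},b}$, which is where the extension of the martingale property of \cite{Martinez-Ohavi Walsh} enters. Granted these, both limits are driven entirely by the single occupation-time input \eqref{eq : maj zero x}.
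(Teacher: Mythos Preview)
Your proposal is correct and follows essentially the same route as the paper: decompose along the excursion intervals $[\tau_n^\varepsilon,\theta_{n+1}^\varepsilon]$, apply It\^o's formula (resp.\ the SDE \eqref{eq : diff x}) there, use that $dl$ is supported on these intervals while the remaining drift and martingale pieces live on $\{x\le\varepsilon\}$, and conclude via the non-stickiness estimate \eqref{eq : maj zero x}. The only cosmetic difference is in the treatment of the incomplete last excursion: the paper controls it via continuity of paths and an ellipticity bound on $\E[\theta_{p+1}^\varepsilon-\tau_p^\varepsilon]$, whereas you re-apply It\^o on that last interval and bound the local-time increment there by the same $O(\sqrt\varepsilon)$ argument; both work.
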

\begin{proof}
We focus in proving only \eqref{eq appro quadra 2}, since it appears clear that \eqref{eq appro quadra 1} can be obtained with the same arguments, with the aid of the idendity map and $f=Id$, after an argument of localization.

\textbf{Recall that $x(0)=x_\star>0$, and we can assume without lose of generality that $\varepsilon<<x_\star$.} 
For simplicity, we denote in the rest of the proof:
$$F(\cdot)=f_{i(\cdot)}(\cdot,x(\cdot),l(\cdot)),~~\P^{x_\star,i_\star}~~\text{a.s,}$$
whereas on each edge, $\mathcal{L}[f]$ is given by:
$$\forall (s,x,i,l);~~\mathcal{L}[f](s,x,i,l)=\partial_tf_i(s,x,l)+\frac{1}{2}\sigma_i^2(s,x,l)\partial^2_xf_i(s,x,l)+b_i(s,x,l)\partial_xf_i(s,x,l),$$
Let $t\in [0,T]$. Using the It\^o's formula given in Theorem \ref{th It\^o's formula} in the present contribution and that the paths of the local time process $l(\cdot)$ are flat on $\displaystyle \bigcup_{p\ge 1}[\theta_{p}^\varepsilon,\tau_{p}^\varepsilon)$
we have:
\begin{align*}
&F(t)-F(0)=\int_{0}^{ t}\partial_xf_{i(s)}(s,x(s),l(s))\sigma_{i(s)}(s,x(s),l(s))dW(s)\\
&\ds\;\;\;\;\;\;+\int_0^{t}\mathcal{L}[f](s,x(s),i(s),l(s))ds+
 \int_0^{t}\Big(\partial_lf(s,0,l(s))+\ds\sum_{i=1}^I\alpha_i(s,l(s))\partial_xf_i(s,0,l(s))\Big)dl(s)\\
&=\sum_{n\ge 1}F(\tau_{n}^\varepsilon \wedge t)-F(\theta_n^\varepsilon \wedge t)+\sum_{n\ge 1}F(\theta_{n}^\varepsilon \wedge t)-F(\tau_{n-1}^\varepsilon \wedge t)\\
&=\sum_{n\ge 1}\int_{\theta_n^\varepsilon \wedge t}^{\tau_{n}^\varepsilon \wedge t}\partial_xf_{i(s)}(s,x(s),l(s))\sigma_{i(s)}(s,x(s),l(s))dW(s)\\
&\ds+ \sum_{n\ge 1}\int_{\theta_n^\varepsilon \wedge t}^{\tau_{n}^\varepsilon \wedge t}\mathcal{L}[f](s,x(s),i(s),l(s))ds +\sum_{n\ge 1}F(\theta_{n}^\varepsilon \wedge t)-F(\tau_{n-1}^\varepsilon \wedge t),~~\P^{x_\star,i_\star}~~\text{a.s.}
\end{align*}
We obtain therefore:
\begin{align}\label{eq : dec 1 appro quadr}
\nonumber &\sum_{n\ge 1}F(\theta_{n+1}^\varepsilon \wedge t)-F(\tau_{n}^\varepsilon \wedge t)\\
&\nonumber \ds - \Big(\ds \int_0^{t}\big(\partial_lf(s,0,l(s))+\ds\sum_{i=1}^I\alpha_i(s,l(s))\partial_xf_i(s,0,l(s))\big)dl(s)\Big)\\
&\nonumber =\sum_{n\ge 1}\int_{\tau_n^\varepsilon \wedge t}^{\theta_{n+1}^\varepsilon \wedge t}\partial_xf_{i(s)}(s,x(s),l(s))\sigma_{i(s)}(s,x(s),l(s))\ind{x(u)\leq \varepsilon}dW(s)\\
&+\sum_{n\ge 1}\int_{\tau_n^\varepsilon \wedge t}^{\theta_{n+1}^\varepsilon \wedge t}\mathcal{L}[f](s,x(s),i(s),l(s))\ind{x(u)\leq \varepsilon}ds,~~\P^{x_\star,i_\star}~~\text{a.s.}
\end{align}
In the last equation \eqref{eq : dec 1 appro quadr}, with the aid of the assumptions on the coefficients $(\mathcal{H})$ and the test function $f$, using that the intervals $[\tau_n^\varepsilon \wedge t,\theta_{n+1}^\varepsilon \wedge t]$ are distinct, we obtain that there exists a constant $C>0$ independent of $\varepsilon$, such that:
\begin{align*}
&\E^{\P^{x_\star,i_\star}}\Big[~~\Big(~~\sum_{n\ge 1}\int_{\tau_n^\varepsilon \wedge t}^{\theta_{n+1}^\varepsilon \wedge t } \partial_xf_{i(s)}(s,x(s),l(s))\sigma_{i(s)}(s,x(s),l(s))\ind{x(u)\leq \varepsilon}dW(s)~~\Big)^2~~\Big]=\\
&\E^{\P^{x_\star,i_\star}}\Big[~~\int_0^t\sum_{n\ge 1}\mathbf{1}_{[\tau_n^\varepsilon \wedge t,\theta_{n+1}^\varepsilon \wedge t]}(s)\big(\partial_xf_{i(s)}(s,x(s),l(s))\sigma_{i(s)}(s,x(s),l(s))\big)^2ds~~\Big]\leq\\
& C\E^{\P^{x_\star,i_\star}}\Big[~~\int_0^T\ind{0\leq x(u) \leq \varepsilon}du~~\Big].   \end{align*}
Therefore, from the  {\it non-stickiness} property \eqref{eq : maj zero x}:
\begin{align*}
&\lim_{\varepsilon \searrow 0}\E^{\P^{x_\star,i_\star}}\Big[~~\Big(~~\sum_{n\ge 1}\int_{\tau_n^\varepsilon \wedge t}^{\theta_{n+1}^\varepsilon \wedge t } \partial_xf_{i(s)}(s,x(s),l(s))\sigma_{i(s)}(s,x(s),l(s))\ind{x(u)\leq \varepsilon}dW(s)~~\Big)^2~~\Big]=0,   \end{align*}
and from Cauchy-Schwarz inequality:
\begin{align*}
&\lim_{\varepsilon \searrow 0}\E^{\P^{x_\star,i_\star}}\Big[~~\Big|~~\sum_{n\ge 1}\int_{\tau_n^\varepsilon \wedge t}^{\theta_{n+1}^\varepsilon \wedge t } \partial_xf_{i(s)}(s,x(s),l(s))\sigma_{i(s)}(s,x(s),l(s))\ind{x(u)\leq \varepsilon}dW(s)~~\Big|~~\Big]=0.   \end{align*}
With the same arguments, we have:
\begin{align*}
&\lim_{\varepsilon \searrow 0}\E^{\P^{x_\star,i_\star}}\Big[~~\Big|~~\sum_{n\ge 1}\int_{\tau_n^\varepsilon \wedge t}^{\theta_{n+1}^\varepsilon \wedge t}\mathcal{L}[f](s,x(s),i(s),l(s))\ind{x(u)\leq \varepsilon}ds~~\Big|~~\Big]=0.   \end{align*}
We obtain then:
\begin{align}\label{eq cv 1 tezmps local}
\nonumber&\lim_{\varepsilon \searrow 0}\E^{\P^{x_\star,i_\star}}\Big[~~\Big|~~\sum_{n\ge 1}F(\theta_{n+1}^\varepsilon \wedge t)-F(\tau_{n}^\varepsilon \wedge t)-\\
&\Big(\ds \int_0^{t}\big(\partial_lf(s,0,l(s))+\ds\sum_{i=1}^I\alpha_i(s,l(s))\partial_xf_i(s,0,l(s))\big)dl(s)\Big)~~\Big|~~\Big]=0.   
\end{align}
Assume now first that there exists $p\in \mathbb{N}$ such that $t\in [\theta_{p+1}^\varepsilon,\tau_{p+1}^\varepsilon)$. In this case, we have $\mathcal{N}^\varepsilon(t)=p$ and:
\begin{align*}
&\sum_{n\ge 1}F(\theta_{n+1}^\varepsilon \wedge t)-F(\tau_{n}^\varepsilon \wedge t)=\sum_{n= 1}^{\mathcal{N}^\varepsilon(t)}F(\theta_{n+1})-F(\tau_{n}^\varepsilon).
\end{align*} 
On the other hand, if there exists $p\in \mathbb{N}$ such that $t \in [\tau_{p}^\varepsilon,\theta_{p+1}^\varepsilon)$, we have:
\begin{align*}
\sum_{n\ge 1}F(\theta_{n+1}^\varepsilon \wedge t)-F(\tau_{n}^\varepsilon \wedge t)=F(t)-F(\tau_{p}^\varepsilon)+\Big[\sum_{n=1}^{\mathcal{N}^\varepsilon(t)}F(\theta_{n+1}^\varepsilon)-F(\tau_{n}^\varepsilon)\Big]\ind{\mathcal{N}^\varepsilon(t)\ge 1}.
\end{align*}
We deduce therefore that to obtain:
\begin{align*}
\nonumber&\lim_{\varepsilon \searrow 0}\E^{\P^{x,i}}\Big[~~\Big|~~\sum_{n=1}^{\mathcal{N}^\varepsilon(t)}F(\theta_{n+1}^\varepsilon)-F(\tau_{n}^\varepsilon)-\\
&\Big(\ds \int_0^{t}\big(\partial_lf(s,0,l(s))+\ds\sum_{i=1}^I\alpha_i(s,l(s))\partial_xf_i(s,0,l(s))\big)dl(s)\Big)~~\Big|\ind{\mathcal{N}^\varepsilon(t)\ge 1}~~\Big]=0,  
\end{align*}
it is enough to prove:
\begin{align}\label{eq cv erreur excursion}
\lim_{\varepsilon \searrow 0}\E^{\P^{x,i}}\Big[~~\Big|~~\sum_{p\ge 1}\big(F(t)-F(\tau_{p}^\varepsilon)\big)\mathbf{1}_{\{t\in [\tau_{p}^\varepsilon,\theta_{p+1}^\varepsilon)\}}~~\Big|~~\Big]=0. 
\end{align}
Remark that from the ellipticity assumption we have $\forall p\ge 1$:
\begin{align*}
&\varepsilon^2=\E^{\P_{t}^{x,i,l}}\big[\big(x(\theta_{p+1}^\varepsilon)-x(\tau_{p}^\varepsilon)\big)^2\big]=\E^{\P_{t}^{x,i,l}}\big[\int_{\tau_{p}^\varepsilon}^{\theta_{p+1}^\varepsilon}\sigma_{i(s)}(s,x(s),l(s))^2ds\big]\\
&\ge c~\E^{\P_{t}^{x,i,l}}\big[\theta_{p+1}^\varepsilon-\tau_{p}^\varepsilon \big].
\end{align*}
Hence $\forall p\ge 1$:
\begin{align*}
    \lim_{\varepsilon\searrow 0}\E^{\P_{t}^{x,i,l}}\big[\theta_{p+1}^\varepsilon-\tau_{p}^\varepsilon \big]=0.
\end{align*}
We can then conclude that \eqref{eq cv erreur excursion} holds true using the regularity of $f$, the continuity of the paths of the canonical process, and Lebesgue's theorem.
\end{proof}
\begin{Corollary}
Assume that $x(0)=x_\star$. Then \eqref{eq appro quadra 1}  and \eqref{eq appro quadra 2} hold true.
\end{Corollary}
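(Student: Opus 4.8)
The case $x_\star>0$ is exactly Proposition \ref{pr : approx int temps local}, so the only new content is the case $x_\star=0$, i.e. when the spider starts from the junction vertex $\bf 0$. The plan is to rerun the proof of Proposition \ref{pr : approx int temps local} and to isolate the single place where the hypothesis $x_\star>0$ (used there through ``$\varepsilon\ll x_\star$'') actually intervened: when $x_\star=0$, the initial interval $[\tau_0^\varepsilon,\theta_1^\varepsilon)=[0,\theta_1^\varepsilon)$ is now an excursion of $x(\cdot)$ \emph{below} the level $\varepsilon$ along which the local time $l(\cdot)$ can already increase, whereas for $x_\star\gg\varepsilon$ it was an excursion \emph{above} $\varepsilon$ carrying no local time.

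First I would establish the elementary fact that $\theta_1^\varepsilon\searrow 0$, $\P^{x_\star,i_\star}$-a.s., as $\varepsilon\searrow 0$: by continuity of $x(\cdot)$ and $x(0)=0$ the family $(\theta_1^\varepsilon)_{\varepsilon>0}$ is non-increasing, hence admits an a.s. limit $\theta^\ast\ge0$; every $s<\theta^\ast$ satisfies $s<\theta_1^\varepsilon$ and thus $x(s)<\varepsilon$ for all $\varepsilon>0$, so $x\equiv0$ on $[0,\theta^\ast)$; consequently $\int_0^T\ind{x(u)\le\varepsilon}du\ge\theta^\ast$ for every $\varepsilon>0$, and taking expectations and invoking the non-stickiness bound \eqref{eq : maj zero x} gives $\E^{\P^{x_\star,i_\star}}[\theta^\ast]\le C\varepsilon$ for all $\varepsilon>0$, whence $\theta^\ast=0$.

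Then I would copy the proof of Proposition \ref{pr : approx int temps local} word for word, the sums $\sum_{n\ge1}$ over the ``low'' intervals $[\tau_n^\varepsilon\wedge t,\theta_{n+1}^\varepsilon\wedge t]$ being replaced by sums $\sum_{n\ge0}$; concretely this adds to the right-hand side of \eqref{eq : dec 1 appro quadr} the $n=0$ term
\[
\int_0^{\theta_1^\varepsilon\wedge t}\partial_xf_{i(s)}(s,x(s),l(s))\sigma_{i(s)}(s,x(s),l(s))\ind{x(u)\le\varepsilon}dW(s)+\int_0^{\theta_1^\varepsilon\wedge t}\mathcal{L}[f](s,x(s),i(s),l(s))\ind{x(u)\le\varepsilon}ds,
\]
and to the left-hand side the boundary increment $F(\theta_1^\varepsilon\wedge t)-F(0)$. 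As for the other low-interval terms in the proof of Proposition \ref{pr : approx int temps local}, assumption $(\mathcal H)$, the regularity of $f$ and the It\^o isometry bound the two integrals above in $L^1(\P^{x_\star,i_\star})$ by $C\,\E^{\P^{x_\star,i_\star}}\big[\int_0^T\ind{x(u)\le\varepsilon}du\big]$, which tends to $0$ by \eqref{eq : maj zero x}; and $F(\theta_1^\varepsilon\wedge t)-F(0)\to0$ a.s. because $\theta_1^\varepsilon\searrow0$ and $F(\cdot)=f_{i(\cdot)}(\cdot,x(\cdot),l(\cdot))$ is right-continuous at $0$ (using the continuity of each $f_i$ and the matching condition $f_i(t,0,l)=f(t,0,l)$), while $|F(\theta_1^\varepsilon\wedge t)-F(0)|\le 2\sup|f|$, so it too tends to $0$ in $L^1(\P^{x_\star,i_\star})$ by dominated convergence. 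No other estimate is affected, so \eqref{eq appro quadra 2} follows; and \eqref{eq appro quadra 1} is then deduced from it, after the localization argument already used in the Proposition, by taking $f=\id$ (so $\partial_lf\equiv0$, $\partial_xf_i\equiv1$, $\sum_i\alpha_i\equiv1$, the sum becomes $\varepsilon\,\mathcal{N}^\varepsilon(t)$ and the integral becomes $l(t)$), the only additional point being that on $\{\mathcal{N}^\varepsilon(t)=0\}=\{\tau_1^\varepsilon>t\}$ the local time grew only on $[0,\theta_1^\varepsilon]$, so $0\le l(t)\le l(\theta_1^\varepsilon)\le l(T)$ and $l(\theta_1^\varepsilon)\to0$ a.s., hence $\E^{\P^{x_\star,i_\star}}\!\big[|\varepsilon\mathcal{N}^\varepsilon(t)-l(t)|\,\ind{\mathcal{N}^\varepsilon(t)=0}\big]\to0$.

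The genuinely new ingredient, and the only place requiring care, is the treatment of that first excursion below $\varepsilon$: one must check at once that its martingale and drift contributions are absorbed by the non-stickiness estimate and that the boundary increment $F(\theta_1^\varepsilon)-F(0)$ — ``missing'' because the sum in \eqref{eq appro quadra 2} starts at $n=1$ rather than $n=0$ — is asymptotically negligible, and it is precisely here that the a.s. convergence $\theta_1^\varepsilon\searrow0$ established above enters. Beyond this bookkeeping I do not foresee any serious obstacle.
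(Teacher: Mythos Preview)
Your argument is correct and follows essentially the same line as the paper's own proof: both recognise that the only obstacle when $x_\star=0$ is the initial excursion $[0,\theta_1^\varepsilon]$ below level $\varepsilon$, and both dispose of it via the almost-sure convergence $\theta_1^\varepsilon\searrow 0$. The paper's version is a two-line sketch---it simply asserts that the arguments of the Proposition run verbatim on $[\theta_1^\varepsilon,T]$ and then appeals to $\theta_1^\varepsilon\to 0$ together with the right-continuity of $t\mapsto\mathcal{N}^\varepsilon(t)$---whereas you flesh this out explicitly by inserting the $n=0$ interval into the decomposition \eqref{eq : dec 1 appro quadr}, bounding its martingale and drift pieces by the non-stickiness estimate \eqref{eq : maj zero x}, and controlling the residual boundary increment $F(\theta_1^\varepsilon\wedge t)-F(0)$ by dominated convergence; you also treat the set $\{\mathcal{N}^\varepsilon(t)=0\}$ separately when passing from \eqref{eq appro quadra 2} to \eqref{eq appro quadra 1}, which the paper leaves implicit. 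In substance the two proofs coincide, with yours supplying the details the paper omits.
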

\begin{proof}
We can use the same arguments of the last Proposition for any $t\in [\theta_1^\varepsilon,T]$. Because:
$$\lim_{\varepsilon \searrow 0}\theta_1^\varepsilon=0,~~\theta_1^\varepsilon \ge0,~~\P^{x,i}~~\text{a.s},$$
we can conclude using that the process $\mathcal{N}^\varepsilon(\cdot)$ is right continuous.
\end{proof}
\begin{Remark}\label{rm Freidlin loi grand nombre}
The  result obtained in the last proposition, more precisely the approximation \eqref{eq appro quadra 2}, generalizes one of the key point used in \cite{freidlinS}. Indeed, the construction of the spider with constant spinning measure introduced in \cite{Freidlin-Wentzell-2}, with the aid of semi-group theory, implies the strong markovian property. Therefore, it follows that the that sequence of random variables $\big(i(\theta_n^\varepsilon)_{n\ge 0}\big)$ are i.i.d. This last property was the key to obtain the It\^o's formula in Lemma 2.3 of \cite{freidlinS}. More precisely, with the aid of the law of large number, one can show (for $f$ regular enough) that:
$$\lim_{\varepsilon \searrow 0}  \sum_{n}\Big(f_{i(\theta_{n+1}^\varepsilon )}(x(\theta_{n+1}^\varepsilon))-f_{i(\tau_{n}^\varepsilon)}(x(\tau_{n}^\varepsilon))\Big)=\Big(\sum_{i=1}^I\alpha_i\partial_xf_i(0)\Big)l(t).$$
Hence, the approximation \eqref{eq appro quadra 2} extends this convergence in our case where the random variables $\big(i(\theta_n^\varepsilon)_{n\ge 0}\big)$ are not i.i.d.
\end{Remark}
The second result of this Section is the following mean-value approximation for the local time $l(\cdot)$ at the junction point $\bf 0$.
\begin{Proposition}
For any nonempty subset $\mathcal{K}\subset [I]$, we have: 
\begin{eqnarray}\label{eq : appro quadra sous ens}
&\nonumber\ds \lim_{\varepsilon \searrow 0} \E^{\P^{x_\star,i_\star}}\Big[~\Big|\sum_{j\in \mathcal{K}}\int_0^{\cdot}\alpha_j(s,l(s))dl(s)-\\
&\ds \sum_{j\in \mathcal{K}}\frac{1}{2\varepsilon}\int_0^{\cdot}\sigma_j^2(s,0,l(s))\ind{0\leq x(s) \leq \varepsilon, i(s)=j}ds\Big|_{(0,T)}~\Big]=0. 
\end{eqnarray}
In particular,
\begin{eqnarray}\label{eq : appro quadra sous}
\lim_{\varepsilon \searrow 0} \E^{\P^{x_\star,i_\star}}\Big[~\Big|l(\cdot)-\sum_{j\in [I]}\frac{1}{2\varepsilon}\int_0^{\cdot}\sigma_j^2(s,0,l(s))\ind{0\leq x(s) \leq \varepsilon, i(s)=j}ds\Big|_{(0,T)}~\Big]=0.\end{eqnarray}
\end{Proposition}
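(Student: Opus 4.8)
The plan is to obtain both identities from one application of the Itô formula of Theorem~\ref{th It\^o's formula} to an $\varepsilon$-dependent, piecewise--quadratic test function on $\mathcal{J}$, and then to bound every remainder term by the occupation time $\int_0^{\cdot}\ind{0\le x(u)\le\varepsilon}\,du$, whose expectation is $O(\varepsilon)$ by the non-stickiness estimate \eqref{eq : maj zero x}. Fix a nonempty $\mathcal{K}\subset[I]$ and $\varepsilon>0$; for $j\in\mathcal{K}$ set $f^\varepsilon_j(x):=x-\frac{x^2}{2\varepsilon}$ on $[0,\varepsilon]$ and $f^\varepsilon_j(x):=\varepsilon/2$ on $[\varepsilon,+\infty)$, while $f^\varepsilon_j\equiv 0$ for $j\notin\mathcal{K}$. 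Then $f^\varepsilon$ is continuous at $\mathbf{0}$ (all $f^\varepsilon_j(0)=0$), is $\mathcal{C}^1$ on $\mathcal{J}$ and $\mathcal{C}^2$ on each ray away from the single corner $x=\varepsilon$, and obeys $0\le f^\varepsilon_j\le\varepsilon/2$, $|\partial_x f^\varepsilon_j|\le\ind{0\le x\le\varepsilon}$, $\partial_x f^\varepsilon_j(0)=\ind{\{j\in\mathcal{K}\}}$, and $\partial_{xx}^2 f^\varepsilon_j=-\frac1\varepsilon\ind{0\le x\le\varepsilon}\ind{\{j\in\mathcal{K}\}}$. Since Theorem~\ref{th It\^o's formula} is stated for test functions in $\mathcal{C}^{1,2,1}_b(\mathcal{J}_T\times[0,+\infty))$, I would first mollify $f^\varepsilon$ near $x=\varepsilon$ into functions $f^{\varepsilon,\eta}$ of that class, coinciding with $f^\varepsilon$ off $(\varepsilon-\eta,\varepsilon+\eta)$ and with $|\partial_{xx}^2 f^{\varepsilon,\eta}_j|\le 1/\varepsilon$, apply the formula to $f^{\varepsilon,\eta}$, and send $\eta\to0$; here the absolute continuity of $\mathcal{L}(x(u))$ for $u>0$ established in the treatment of Problem~2 guarantees $\int_0^{\cdot}\ind{x(u)=\varepsilon}\,du=0$ a.s., so every term converges by dominated convergence with respect to the occupation measure.

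Because $f^\varepsilon$ depends on neither $t$ nor $l$ (so the $\partial_l f$--part of the Kirchhoff term drops out), the Itô formula becomes, $\P^{x_\star,i_\star}$-a.s.\ and for every $t\in[0,T]$,
\begin{align*}
&\sum_{j\in\mathcal{K}}\int_0^t\alpha_j(u,l(u))\,dl(u)-\sum_{j\in\mathcal{K}}\frac{1}{2\varepsilon}\int_0^t\sigma_j^2(u,0,l(u))\,\ind{0\le x(u)\le\varepsilon,\,i(u)=j}\,du\\
&\quad=f^\varepsilon_{i(t)}(x(t))-f^\varepsilon_{i_\star}(x_\star)-\int_0^t\sigma_{i(u)}(u,x(u),l(u))\,\partial_x f^\varepsilon_{i(u)}(x(u))\,dW(u)\\
&\qquad-\int_0^t b_{i(u)}(u,x(u),l(u))\,\partial_x f^\varepsilon_{i(u)}(x(u))\,du\\
&\qquad+\sum_{j\in\mathcal{K}}\frac{1}{2\varepsilon}\int_0^t\big(\sigma_j^2(u,x(u),l(u))-\sigma_j^2(u,0,l(u))\big)\,\ind{0\le x(u)\le\varepsilon,\,i(u)=j}\,du,
\end{align*}
since $\partial_x f^\varepsilon_j(0)=\ind{\{j\in\mathcal{K}\}}$ turns the Kirchhoff term into $\sum_{j\in\mathcal{K}}\int_0^t\alpha_j(u,l(u))\,dl(u)$ and $\frac12\sigma^2_{i(u)}\partial_{xx}^2 f^\varepsilon_{i(u)}$ produces the second sum on the left after adding and subtracting $\sigma_j^2(u,0,l(u))$. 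It then remains to estimate the four terms on the right in $\E^{\P^{x_\star,i_\star}}\!\big[\,|\,\cdot\,|_{(0,T)}\,\big]$: the first is $\le\varepsilon$ pathwise; the stochastic integral, by Burkholder--Davis--Gundy together with $|\partial_x f^\varepsilon_{i(u)}|\le\ind{0\le x(u)\le\varepsilon}$ and the boundedness of $\sigma$, has sup-norm $L^1$-bound $\le C\,\E^{\P^{x_\star,i_\star}}\big[\int_0^T\ind{0\le x(u)\le\varepsilon}\,du\big]^{1/2}$; the drift term is $\le|b|\int_0^T\ind{0\le x(u)\le\varepsilon}\,du$; and, using the Lipschitz bound on $\sigma_j$ from $(\mathcal{H})$, $|\sigma_j^2(u,x,l)-\sigma_j^2(u,0,l)|\le C\varepsilon$ on $\{0\le x\le\varepsilon\}$, so the last term is $\le C\int_0^T\ind{0\le x(u)\le\varepsilon}\,du$. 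By \eqref{eq : maj zero x} each bound has expectation $O(\sqrt\varepsilon)$, which proves \eqref{eq : appro quadra sous ens}; and \eqref{eq : appro quadra sous} is the special case $\mathcal{K}=[I]$, because $\alpha(u,l(u))\in\mathcal{P}([I])$ gives $\sum_{j=1}^I\alpha_j(u,l(u))\equiv 1$, hence $\sum_{j\in[I]}\int_0^{\cdot}\alpha_j(u,l(u))\,dl(u)=l(\cdot)-l(0)=l(\cdot)$.

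The one genuinely delicate point is the regularity step: $f^\varepsilon$ has a second derivative that jumps at $x=\varepsilon$, so one must justify the passage $\eta\to0$ for the second-order term --- which is exactly where the absolute continuity of the time-marginals of $x(\cdot)$ from Problem~2 is used --- or, equivalently, first upgrade Theorem~\ref{th It\^o's formula} into an Itô--Tanaka formula valid for maps that are $\mathcal{C}^1$ on $\mathcal{J}$ with absolutely continuous derivative on each ray (as alluded to at the end of the proof of Theorem~\ref{th It\^o's formula}). Once that is in place, everything else is the routine chain of estimates above, all driven by \eqref{eq : maj zero x}.
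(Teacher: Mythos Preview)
Your proof is correct and follows the same strategy as the paper: apply the It\^o formula to a piecewise-quadratic test function with a corner at $x=\varepsilon$, justify this by mollification near the corner, and then bound every remainder by the occupation-time estimate \eqref{eq : maj zero x}. The paper makes two cosmetically different choices. First, it uses the unbounded function $\phi^\varepsilon(x)=\tfrac{x^2}{2\varepsilon}\ind{x\le\varepsilon}+(x-\tfrac{\varepsilon}{2})\ind{x>\varepsilon}$ (so that $f^\varepsilon=x-\phi^\varepsilon$ on the rays in $\mathcal{K}$), which forces an additional localization step via a stopping time $\tau^a$ and a subsequent comparison of $\phi^\varepsilon(x(\cdot))$ with the SDE for $x(\cdot)$ itself; your bounded choice $f^\varepsilon$ avoids both. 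Second, to dispose of the set $\{x(u)=\varepsilon\}$ in the mollification limit, the paper rederives $\E\big[\int_0^T\ind{x(s)=\varepsilon}ds\big]=0$ by rerunning the argument of Proposition~5.2 in \cite{Martinez-Ohavi Walsh} at level $\varepsilon$, whereas you invoke the absolute continuity of the time-marginals established in Problem~2. Both shortcuts are legitimate and make your version slightly more economical, but the underlying mechanism is identical.
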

\begin{proof}
The proof will be achieved in two steps. Given $\varepsilon>0$, we introduce first the following function:
\begin{eqnarray}
    \phi^\varepsilon:=\begin{cases}\mathcal{J}\to \R,\\
    (x,i)\mapsto \begin{cases}\frac{x^2}{2\varepsilon},~~\text{if}~~x\leq \varepsilon,\\
    x-\frac{\varepsilon}{2},~~\text{if}~~x\ge \varepsilon.\end{cases}\end{cases}
\end{eqnarray}
We will focus in getting \eqref{eq : appro quadra sous}, since \eqref{eq : appro quadra sous ens} can be obtained with the same arguments considering the same map $\phi^\varepsilon$, but vanishing
on each edge whose indexes belong to $[I]\setminus \mathcal{K}$.

\textbf{Step 1}: We claim first that $\P^{x_\star,i_\star}~~\text{a.s}$:
\begin{equation*}
\begin{split}
&\Big (\phi^\varepsilon(x(s))- \phi^\varepsilon(x_\star)=\\
&\ds
\displaystyle\int_{0}^{s}\displaystyle\frac{1}{2}\sigma_{i(u)}^2(u,x(u),l(u))\partial_x^2\phi^\varepsilon(x(u))
+b_{i(u)}(u,x(u),l(u))\partial_x\phi^\varepsilon(x(u))du+\\
&\int_0^s\sigma_{i(u)}(u,x(u),l(u))\partial_x\phi^\varepsilon(x(u))dW(u)+\displaystyle\sum_{i=1}^{I}\int_{0}^{s}\alpha_i(u,l(u))\partial_x\phi^\varepsilon(0)dl(u)\Big)_{0\leq s \leq T}.
\end{split}
\end{equation*}
The main points to obtain the last formula is to treat the discontinuity of the second derivative of $\phi^\varepsilon$ at $x=\varepsilon$ and to argue by localization. For this purpose, remark first that we can show using the same ideas of Proposition 5.2 in \cite{Martinez-Ohavi Walsh} (modifying in the proof the indicator function appearing (15) for the EDO system (16), by an indicator function of a small ball around the neighborhood of $x=\varepsilon$), that:
\begin{eqnarray}\label{eq : non stick a epislon}
\E^{\P^{x_\star,i_\star}}\Big[~~\int_0^{T}\mathbf{1}_{\{x(s)=\varepsilon\}}ds~~\Big]=0.  
\end{eqnarray}
In the sequel, we are going to regularize $\phi^\varepsilon$ by convolution. Let $\theta >0$. We introduce $\rho_\theta$ an infinite differentiable kernel (with $\ds \int_{\R}\rho_\theta =1$, and compact support $[-\theta,\theta]$), converging weakly to the dirac mass at $0$ in $\R$, in the sense of the distributions when $\theta \searrow 0$. We define for all $x\in  [0,+\infty)$:
\begin{eqnarray*}
\nonumber &\phi^{\varepsilon,\theta}(x)=\ds \int_{\R}\phi^{\varepsilon}(|z|)\rho_\varepsilon(x-z)dz.
\end{eqnarray*}
Since $\phi^{\varepsilon,\theta}$ does not depend on $i\in [I]$, the associated function defined on the junction $\mathcal{J}$ is then in the class $\mathcal{C}^2\big(\mathcal{J}\big)$. Fix now $\delta>0$ and $a>0$ two other parameters, where $a$ is large enough from $\varepsilon$, whereas $[\varepsilon-\delta,\varepsilon+\delta]\subset [0,a]$. To the parameter $a>0$, we associate the following stopping time:
$$\tau^a:=\inf\{~s\ge 0,~~x(s)\ge a~\},~~\P^{x_\star,i_\star}~~\text{a.s}.$$
Remark now that the regularity of $\phi^\varepsilon$ with the same arguments of proof used in Proposition 6.3 (Step 1) in \cite{Martinez-Ohavi Walsh}, lead to:
\begin{align*}
\lim_{\theta \searrow 0} \|\phi^{\varepsilon,\theta}(\cdot)-\phi^{\varepsilon}(\cdot)\|_{C^{1}([0,a])}=0,\\
\lim_{\theta \searrow 0} \|\phi^{\varepsilon,\theta}(\cdot)-\phi^{\varepsilon}(\cdot)\|_{C^{2}([0,\varepsilon-\delta]\cup[\varepsilon+\delta,a])}=0.
\end{align*}
On the other hand remark that:
\begin{align*}
&\limsup_{\delta \searrow 0}\E^{\P^{x_\star,i_\star}}\Big[~~\big|~~\ds \int_0^{\cdot}\frac{1}{2}\sigma_{i(s)}(s,x(s),l(s)\big(\phi^{\varepsilon,\theta}(x(s))-\phi^{\varepsilon}(x(s))\big)\ind{x(s)\in [\varepsilon-\delta,\varepsilon+\delta]}ds~~\big|_{(0,T)}\Big]\leq \\
&C\limsup_{\delta \searrow 0}\E^{\P^{x_\star,i_\star}}\Big[~~\ds \int_0^{T}\ind{x(s)\in [\varepsilon-\delta,\varepsilon+\delta]}ds\Big]=0, 
\end{align*}
where in the last equation $C>0$ is a standard constant, independent of $\delta$.
From It\^o's formula (established in Theorem \ref{th It\^o's formula} of this contribution), we have $\P^{x_\star,i_\star}$-a.s:
\begin{align*}
\begin{split}
&\Big (\phi^{\varepsilon,\theta}(x(s\wedge \tau^a))- \phi^{\varepsilon,\theta}(x_\star)=
\displaystyle\int_{0}^{s\wedge \tau^a}\Big(\displaystyle\frac{1}{2}\sigma_{i(u)}^2(u,x(u),l(u))\partial_x^2\phi^{\varepsilon,\theta}(x(u))\Big(\ind{x(s)\in [\varepsilon-\delta,\varepsilon+\delta]}+\\
&\mathbf{1}_{\{x(s)\in [0,\varepsilon-\delta]\cup[\varepsilon+\delta,a]\}}\Big)
+b_{i(u)}(u,x(u),l(u))\partial_x\phi^{\varepsilon,\theta}(x(u))\Big)du+\\
&\int_0^{s\wedge \tau^a}\sigma_{i(u)}(u,x(u),l(u))\partial_x\phi^{\varepsilon,\theta}(x(u))dW(u)+\displaystyle\sum_{i=1}^{I}\int_{0}^{^{s\wedge \tau^a}}\alpha_i(u,l(u))\partial_x\phi^{\varepsilon,\theta}(0)dl(u)\Big)_{0\leq s \leq T}.
\end{split}
\end{align*}
Hence the last arguments, will lead therefore if we send first $\delta \searrow 0$ (up to a sub sequence), and then after $\theta \searrow 0$, to obtain:
\begin{equation*}
\begin{split}
&\Big (\phi^{\varepsilon}(x(s\wedge \tau^a))-\phi^{\varepsilon}(x_\star)=\\
&\displaystyle\int_{0}^{s\wedge \tau^a}\displaystyle\frac{1}{2}\sigma_{i(u)}^2(u,x(u),l(u))\partial_x^2\phi^{\varepsilon}(x(u))
+b_{i(u)}(u,x(u),l(u))\partial_x\phi^{\varepsilon}(x(u))du\\
&\int_0^{s\wedge \tau^a}\sigma_{i(u)}(u,x(u),l(u))\partial_x\phi^{\varepsilon}(x(u))dW(u)+\displaystyle\sum_{i=1}^{I}\int_{0}^{^{s\wedge \tau^a}}\alpha_i(u,l(u))\partial_x\phi^{\varepsilon}(0)dl(u)\Big)_{0\leq s \leq T},\\
&\P^{x_\star,i_\star}~~\text{a.s}.
\end{split}
\end{equation*}
We conclude that the result stated at the beginning of this first \textbf{Step 1} holds true, using the monotone convergence, as soon as $a \nearrow +\infty$.

\textbf{Step 2:} Now we prove \eqref{eq : appro quadra sous}. Using the \textbf{Step 1}, the stochastic differential equation satisfied by the process $x(\cdot)$, and the expressions of the derivatives of $\phi^\varepsilon$ we obtain that:
\begin{eqnarray}\label{eq : decompo appro quad}
\nonumber \E^{\P^{x_\star,i_\star}}\Big[~~\Big|~~l(\cdot)-\sum_{j\in [I]}\frac{1}{2\varepsilon}\int_t^{\cdot}\sigma_j^2(s,x(u),l(s))\ind{0\leq x(s) \leq \varepsilon, i(s)=j}ds~~\Big|_{(0,T)}~~\Big] \\ \label{decomp 1}
\leq~~ C~~\Big(~~\E^{\P^{x_\star,i_\star}}\Big[~~\Big|\phi^\varepsilon(x(\cdot))- \phi^\varepsilon(x_\star)-\big(x(\cdot)-x_\star\big)~~\Big|_{(0,T)}~~\Big]\\\label{decomp 2}
+~~\E^{\P^{x_\star,i_\star}}\Big[~~\Big|~~\int_t^\cdot \sigma_{i(u)}(u,x(u),l(u))\ind{0\leq x(u) \leq \varepsilon}dW(u)~~\Big|_{(0,T)}~~\Big]\\ \label{decomp 3}
+~~\E^{\P^{x_\star,i_\star}}\Big[~~\Big|~~\int_t^\cdot b_{i(u)}(u,x(u),l(u))\ind{0\leq x(u) \leq \varepsilon}du~~\Big|_{(0,T)}~~\Big]~~
\Big),
\end{eqnarray}
for a strictly positive constant $C>0$ independent of $\varepsilon$.
We are going to prove that: \eqref{decomp 1}, \eqref{decomp 2}, \eqref{decomp 3} tend to $0$ as soon as $\varepsilon$ is sent to $\searrow 0$. For this purpose remark first that:
$$\forall x\ge 0,~~|\phi^\varepsilon(x)-x|\leq 2\varepsilon,$$
and then we obtain the required convergence for \eqref{decomp 1}. On the other hand, using assumption $(\mathcal{H})$ and the Burkholder-Davis-Gundy inequality, we get that there exists a constant $K>0$ independent of $\varepsilon$, such that:
$$\E^{\P^{x_\star,i_\star}}\Big[~~\Big|~~\int_0^\cdot \sigma_{i(u)}(u,x(u),l(u))\ind{\{0\leq x(s) \leq \varepsilon\}}dW(u)~~\Big|^2_{(0,T)}~~\Big]\leq K\E^{\P^{x_\star,i_\star}}\Big[~~\int_0^T\ind{0\leq x(u) \leq \varepsilon}du~~\Big].$$
Therefore, combining the {\it non-stickiness} property \eqref{eq : maj zero x} with the Cauchy-Schwarz inequality, \eqref{decomp 2} tends to $0$ as soon as $\varepsilon  \searrow 0$. Similarly we obtain that \eqref{decomp 3} tends to $0$ as soon as $\varepsilon  \searrow 0$. Finally, the uniform Lipschitz regularity of the coefficients $(\sigma_i)_{i\in [I]}$ with respect to their second variable: Assumption $(\mathcal{H})-({\bf R})~~(ii)$, implies:
$$\big|\sigma_{i(u)}(u,x(u),l(u))-\sigma_{i(u)}(u,0,l(u))\big|\ind{\{0\leq x(s) \leq \varepsilon\}}\leq \varepsilon|\sigma|\ind{\{0\leq x(s) \leq \varepsilon\}},~~\P_{t}^{x,i,l}~~\text{a.s.}$$
We obtain therefore using once again the {\it non-stickiness} condition \eqref{eq : maj zero x}
$$\lim_{\varepsilon \searrow 0} \E^{\P^{x_\star,i_\star}}\Big[~~\Big|~~\sum_{j\in [I]}\frac{1}{2\varepsilon}\int_0^{\cdot}\big(\sigma_{j}(u,x(u),l(u))-\sigma_{j}(u,0,l(u))\big)\ind{0\leq x(s) \leq \varepsilon, i(s)=j}ds~~\Big|_{(0,T)}~~\Big]=0,$$
and that completes the proof.
\end{proof}


\section{Problem 5 - Strong Markov property}
We discuss in this section on the strong Markov property related to the process involved in Theorem \ref{th: exis Spider}, and Corollary \ref{cr: exis Spider}.
\begin{Lemma}
\label{lem:null-set}
Let $\tau$ a $(\Psi_u)$-stopping time. Let $({\Q}^{Y,\tau}_{t})_{Y\in \Phi}$ a regular conditional distribution (r.c.p.d) of $\P_{t}^{x,i,l}(.|\Psi_\tau)$. For each $Y\in \Phi$ s.t. $\{\tau(Y) < \infty\}$, define $\hat{\Q}^{Y,\tau}_{t}$ on $(\Phi, \Psi)$ by 
\[
\hat{\Q}^{Y,\tau}_{t}(\omega \in A) = {\Q}_{t}^{Y,\tau}\pare{\omega(.+\tau)\in A},\;\;\;\;\forall A\in \Psi_{T}.
\]
Then there exists a $\P_{t}^{x,i,l} $ null set ${\mathcal N}\in \Psi_\tau$ such that for any $Y \notin {\mathcal N}\cup \{\tau = \infty\}$, the probability $\hat{\Q}^{Y,\tau}_{t}$ 
is a solution of $\big(\mathcal{S}_{pi}-\mathcal{M}_{ar}\big)$ starting from $Y(\tau)$ at time $t$.
\end{Lemma}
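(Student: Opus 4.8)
The plan is to run the classical Stroock--Varadhan transfer argument for martingale problems, adapted to the local-time terms and the junction-continuity constraint of $\big(\mathcal{S}_{pi}-\mathcal{M}_{ar}\big)$. Recall from Corollary \ref{cr: exis Spider} that a probability $\mu$ on $(\Phi,\mathbb{B}(\Phi))$ solves the spider martingale problem started from a point $\xi=(x_0,i_0,l_0)$ precisely when: (i) the canonical process is frozen at $\xi$ up to its initial time; (ii) $l(\cdot)$ is nondecreasing and $\int\ind{x(u)>0}dl(u)=0$, $\mu$-a.s.; (iii) for every $f\in\mathcal{C}^{1,2,1}_b(\mathcal{J}_T\times[0,+\infty))$ the functional $\mathcal{M}^f$ of \eqref{eq:def-V} is a $\mu$-martingale. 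The first step is to reduce (iii) to a countable list of scalar identities. The map $f\mapsto\mathcal{M}^f$ is continuous for uniform convergence of $f$ together with its $\partial_t,\partial_x,\partial_{xx},\partial_l$ derivatives on compact subsets of $\mathcal{J}_T\times[0,+\infty)$, and the admissible class is separable for this topology; moreover one may generate a dense subfamily using product-form test functions $t^k h(x) l^m$ with $h\in\mathcal{C}^2(\mathcal{J})$, exactly as in the proof of the It\^o formula established earlier in this contribution, and such functions automatically obey the junction-continuity constraint. So there is a countable family $(f_k)_{k\ge 1}$ of admissible test functions for which the martingale property of the $\mathcal{M}^{f_k}$ forces that of $\mathcal{M}^f$ for all admissible $f$. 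Fix also, for each pair of rationals $0\le q_1<q_2\le T$, a countable family $(G_m)_{m\ge 1}$ of bounded functions generating $\Psi_{q_1}$ (for instance finite products $\prod_j\psi_j(X(r_j))$ with $r_j$ rational in $[0,q_1]$ and $\psi_j$ in a countable dense subset of $C_b(\mathcal{J}\times\R^+)$).

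The heart of the argument is a conditioning identity. Write $\theta_\tau$ for the time-shift $\omega\mapsto\omega(\cdot+\tau)$. Since each $\mathcal{M}^{f_k}$ is a $\P_t^{x,i,l}$-martingale for $(\Psi_s)$ and $\tau$ is a $(\Psi_u)$-stopping time, optional sampling gives, for all rationals $q_1<q_2$ (with times understood capped at $T$), all $B\in\Psi_\tau$ and all $G_m$ as above,
\begin{equation*}
\E^{\P_t^{x,i,l}}\!\left[\ind{\tau<\infty}\,\ind{B}\,(G_m\circ\theta_\tau)\,\big(\mathcal{M}^{f_k}_{\tau+q_2}-\mathcal{M}^{f_k}_{\tau+q_1}\big)\right]=0 ,
\end{equation*}
since $\ind{B}(G_m\circ\theta_\tau)$ is $\Psi_{\tau+q_1}$-measurable and $\mathcal{M}^{f_k}$ has zero conditional increment across $[\tau+q_1,\tau+q_2]$. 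By the defining property of the r.c.p.d. $(\Q^{Y,\tau}_t)_Y$ of $\P_t^{x,i,l}(\cdot\mid\Psi_\tau)$, the left-hand side equals $\E^{\P_t^{x,i,l}}\big[\ind{\tau<\infty}\ind{B}\,\Lambda_{k,q_1,q_2,m}(Y)\big]$, where
$$\Lambda_{k,q_1,q_2,m}(Y):=\E^{\Q^{Y,\tau}_t}\!\big[(G_m\circ\theta_\tau)\big(\mathcal{M}^{f_k}_{\tau+q_2}-\mathcal{M}^{f_k}_{\tau+q_1}\big)\big]$$
is $\Psi_\tau$-measurable; as $B\in\Psi_\tau$ is arbitrary, $\Lambda_{k,q_1,q_2,m}=0$ off a $\P_t^{x,i,l}$-null set of $\Psi_\tau$. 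The key observation is that, re-read along the shifted path, the increment $\mathcal{M}^{f_k}_{\tau+q}-\mathcal{M}^{f_k}_\tau$ is exactly the martingale functional \eqref{eq:def-V} for $\hat\Q^{Y,\tau}_t$ started from the datum $Y(\tau)=(x(\tau),i(\tau),l(\tau))$ --- this is just additivity of the Lebesgue and $dl$-integrals under $\theta_\tau$, together with $l(\tau+\cdot)-l(\tau)$ being the shifted local time and the coefficients $\sigma_i,b_i,\alpha_j$ being read pointwise along $(x,l)$. Hence $\Lambda_{k,q_1,q_2,m}(Y)=\E^{\hat\Q^{Y,\tau}_t}\big[\widetilde G_m\,\big(\widetilde{\mathcal{M}}^{f_k}_{q_2}-\widetilde{\mathcal{M}}^{f_k}_{q_1}\big)\big]$, with $\widetilde{\mathcal{M}}^{f_k}$ the corresponding functional under $\hat\Q^{Y,\tau}_t$ and $\widetilde G_m$ its $\Psi_{q_1}$-measurable counterpart.

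Now I collect null sets. Let $\mathcal{N}_1\in\Psi_\tau$ be the countable union over all $(k,q_1,q_2,m)$ of the null sets on which $\Lambda_{k,q_1,q_2,m}\ne 0$; let $\mathcal{N}_2\in\Psi_\tau$ be the null set off which $\hat\Q^{Y,\tau}_t$ gives full mass to the ($\Psi$-measurable, $\P_t^{x,i,l}$-a.s.) event that $l(\cdot)$ is nondecreasing and $\int\ind{x(u)>0}dl(u)=0$; and let $\mathcal{N}_3\in\Psi_\tau$ be the null set off which $\hat\Q^{Y,\tau}_t(\omega(0)=Y(\tau))=1$, which holds because the r.c.p.d. charges only paths agreeing with $Y$ up to time $\tau$. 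Set $\mathcal{N}:=\mathcal{N}_1\cup\mathcal{N}_2\cup\mathcal{N}_3\in\Psi_\tau$; it is $\P_t^{x,i,l}$-null. For any $Y\notin\mathcal{N}\cup\{\tau=\infty\}$, conditions (i) and (ii) hold for $\hat\Q^{Y,\tau}_t$, and for all $k,m$ and all rational $q_1<q_2$ the functional $\widetilde{\mathcal{M}}^{f_k}$ has vanishing conditional increment against $\widetilde G_m$; since $(G_m)_m$ generates $\Psi_{q_1}$, a functional monotone-class argument makes $\widetilde{\mathcal{M}}^{f_k}$ a $\hat\Q^{Y,\tau}_t$-martingale along rational times, hence along all times by path-continuity, and finally by density of $(f_k)$ in the admissible class $\widetilde{\mathcal{M}}^f$ is a $\hat\Q^{Y,\tau}_t$-martingale for every $f\in\mathcal{C}^{1,2,1}_b(\mathcal{J}_T\times[0,+\infty))$. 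Therefore $\hat\Q^{Y,\tau}_t$ solves $\big(\mathcal{S}_{pi}-\mathcal{M}_{ar}\big)$ started from $Y(\tau)$, which is the assertion.

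The step I expect to be the real obstacle is the first one: producing a countable family $(f_k)$ that is genuinely \emph{determining} for the martingale property over the whole class $\mathcal{C}^{1,2,1}_b(\mathcal{J}_T\times[0,+\infty))$, while respecting the junction-continuity constraint $f_i(\cdot,0,\cdot)=f_j(\cdot,0,\cdot)$ and keeping control of the boundary contribution $\partial_lf(\cdot,0,\cdot)+\sum_j\alpha_j(\cdot,\cdot)\partial_xf_j(\cdot,0,\cdot)$ tested against the a priori irregular clock $l(\cdot)$. Once this separability is in hand, and once the elementary semiflow identity for $\mathcal{M}^f$ under $\theta_\tau$ has been checked (the transfer of the non-stickiness condition being immediate, as it is a path constraint of full measure), the remainder is the routine r.c.p.d. bookkeeping, organised exactly as in the standard treatment of martingale problems (see also \cite{Revuz-Yor}).
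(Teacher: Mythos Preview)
Your proposal is correct and follows essentially the same Stroock--Varadhan transfer argument as the paper: reduce to a countable determining family of test functions, apply the r.c.p.d.\ machinery to get a null set per test function, and take the countable union. The paper compresses your explicit optional-sampling/disintegration computation into a direct citation of Theorem~1.2.10 in \cite{Stroock}, and works with the slightly larger class $\mathfrak{C}^{1,2,0}_{\{0\},b}$ (whose separability is established in \cite{Martinez-Ohavi Walsh}), but the skeleton is identical; if anything you are more careful in separately handling the initial condition and the non-stickiness constraint (your $\mathcal{N}_2,\mathcal{N}_3$), which the paper leaves implicit.
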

\begin{proof}

Denote $\Gamma :=\mathfrak{C}^{1,2,0}_{\{0\},b} \big(\mathcal{J}_T\times[0,+\infty)\big)$. There is a countable set $\Gamma^\ast$ dense in $\Gamma$ with respect to
bounded pointwise convergence of functions together with their first and second partial
derivatives (cf. \cite{Martinez-Ohavi Walsh}). By Theorem 1.2.10 of \cite{Stroock}, for each $f\in \Gamma^\ast$ there is a $\P_{t}^{x,i,l}$-null set ${\mathcal N}_f\in \Psi_\tau$, such that, for all $\omega: u\mapsto (x(u),i(u))\notin {\mathcal N}_f$,
\begin{align}
\chi^Y_f(s)(\omega) &:= f_{i(s)}(s,x(s),l(s))- f_{i_\star}(s\wedge \tau,x(s\wedge \tau),l(s\wedge \tau))\displaystyle\nonumber\\
&\;\;\;-\displaystyle \int_{s\wedge \tau}^{s}\sigma_{i(u)}(u,x(u),l(u))\partial_xf_{i(u)}(u,x(u),l(u))dW(u)\nonumber\\
&\;\;\;\displaystyle - \int_{t\wedge \tau}^{s}\partial_uf_{i(u)}(u,x(u),l(u))du-\int_{s\wedge \tau}^{s}b_{i(u)}(u,x(u),l(u))\partial_xf_{i(u)}(u,x(u),l(u))du\nonumber\\
&\;\;\;\displaystyle -\frac{1}{2}\int_{s\wedge \tau}^{s}\sigma_{i(u)}^2(u,x(u),l(u))\partial_{xx}^2f_{i(u)}(u,x(u),l(u))du\nonumber\\
&\;\;\;\displaystyle - \int_{s\wedge \tau}^{s}\Big[\partial_{l}f(u,0,l(u))dl(u) + \displaystyle\sum_{j=1}^I\alpha_j(u,l(u))\partial_{x}f_{j}(u,0,l(u))\Big]dl(u)
\end{align} 
defines a $\Q_{t}^{Y,\tau}$ martingale where $u\mapsto l(u)\in {\mathcal L}[\tau, T]$ satisfies that for each $u\in [\tau,T]$: 
\begin{eqnarray*}
\displaystyle\displaystyle\int_{\tau}^{u}\mathbf{1}_{\{x(\theta)>0\}}dl(\theta)=0,~~\Q_{t}^{Y,\tau}-\text{ a.s.}
\end{eqnarray*} Indeed, this comes from the fact that by assumption  $\Q_{t}^{Y,\tau}$ is a r.c.p.d. of $\P_{t}^{x_\star,i_\star,l_\star}\pare{\,.\,| \Psi_\tau}$ (note that such a r.c.p.d exists because $\Phi$, being a completely separable metric space, its Borel $\sigma$-field $\Psi$ is countably generated). 

Let ${\mathcal N} = \bigcup_{f\in \Gamma^\ast} {\mathcal N}_f$. Since the martingale property is preserved under bounded convergence for each time $t$, it follows by the density of $\Gamma^\ast$ in $\Gamma$ that $\chi^Y_f$ is a $\Q_{t}^{Y,\tau}$ martingale for each $Y\notin {\mathcal N}$ and $f\in \Gamma$. Then by Doob's submartingale stopping theorem, for
each positive integer $n$, 
\[
\left \{\chi^Y_f(s + \tau \wedge n), \Psi_{s + \tau \wedge n}~:~T> s\geq 0\right \}
\] is a $\Q^{Y,\tau}_{t}$ martingale. Letting $n\rightarrow +\infty$, it follows in view of continuity and bounded convergence that
$\left \{\ind{\tau<\infty}\chi^Y_f(s + \tau),\;\Psi_{s + \tau}~:~T> s\geq 0\right \}$ is a $\Q^{Y,\tau}_{t}$ martingale. Then, from the definition of $\hat{\Q}^{Y,\tau}_{t}$ in the statement of the lemma, it is easy to verify
that $\hat{\Q}^{Y,\tau}_{t}$ is a solution of the martingale problem $\big(\mathcal{S}_{pi}-\mathcal{M}_{ar}\big)$ starting from $Y(\tau)$ at time $t$, whenever $Y\notin {\mathcal N}\cup \{\tau = \infty\}$.
\end{proof}

\begin{Lemma}\label{lem: Markov}
The family $\big \{\P_{t}^{x,i,l} : t\in [0,T), (x,i)\in \mathcal{J}, l\in [0,+\infty) \big \}$ is strong Markov in the sense of Stroock-Varadhan for probability measures : \\
if $\tau >s $ is a finite stopping time then $\pare{\Pi_Y\otimes_{\tau(Y)}\P_{t=\tau(Y)}^{Y}}_{Y\in \Phi}$ is a r.c.p.d. of $\P_{t}^{x,i,l}\pare{\,.\,| \Psi_\tau}$.
\end{Lemma}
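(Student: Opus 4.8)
The plan is to run the classical Stroock--Varadhan argument that derives the strong Markov property from well-posedness of the martingale problem combined with the conditioning Lemma \ref{lem:null-set}; indeed, the whole purpose of Lemma \ref{lem:null-set} was to prepare exactly this step.

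First I would recall the two structural ingredients. Since $\Phi=\mathcal{C}^{\mathcal{J}}[0,T]\times\mathcal{L}[0,T]$ is Polish, its Borel $\sigma$-field $\Psi$ is countably generated, so a regular conditional probability distribution $(\Q_t^{Y,\tau})_{Y\in\Phi}$ of $\P_t^{x,i,l}(\,\cdot\,|\Psi_\tau)$ exists; moreover, by the standard properties of conditioning on a stopped $\sigma$-field (cf. \cite{Stroock}), for $\P_t^{x,i,l}$-a.e. $Y$ the measure $\Q_t^{Y,\tau}$ is carried by the set of paths that agree with $Y$ on $[0,\tau(Y)]$. On the other hand, Corollary \ref{cr: exis Spider} provides, for every starting time $t'\in[0,T)$ and every starting point $(x',i',l')\in\mathcal{J}\times[0,+\infty)$, existence and uniqueness of the solution $\P_{t'}^{x',i',l'}$; as in the Euclidean setting this uniqueness upgrades --- via a monotone-class argument applied to the countable test class $\Gamma^\ast$ used in Lemma \ref{lem:null-set} --- to joint measurability of the solution map $(t',x',i',l')\mapsto\P_{t'}^{x',i',l'}$. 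In particular $Y\mapsto\Pi_Y\otimes_{\tau(Y)}\P_{t=\tau(Y)}^{Y}$ is $\Psi_\tau$-measurable, so it is a legitimate candidate for a r.c.p.d.

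The core is then the identification. Fix a finite $(\Psi_u)$-stopping time $\tau$, take the r.c.p.d. $(\Q_t^{Y,\tau})_Y$ above and form the shifted kernels $\hat{\Q}_t^{Y,\tau}$ as in Lemma \ref{lem:null-set}. That lemma furnishes a $\P_t^{x,i,l}$-null set $\mathcal{N}\in\Psi_\tau$ such that for every $Y\notin\mathcal{N}$ (recall $\tau<\infty$ here) the measure $\hat{\Q}_t^{Y,\tau}$ solves $\big(\mathcal{S}_{pi}-\mathcal{M}_{ar}\big)$ issued from the point $Y(\tau)$. By the uniqueness half of Corollary \ref{cr: exis Spider}, $\hat{\Q}_t^{Y,\tau}$ must be the unique such solution for every $Y\notin\mathcal{N}$. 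Undoing the time shift, and combining this with the fact that $\Q_t^{Y,\tau}$ is carried by paths equal to $Y$ on $[0,\tau(Y)]$, we obtain
\[
\Q_t^{Y,\tau}=\Pi_Y\otimes_{\tau(Y)}\P_{t=\tau(Y)}^{Y}\qquad\text{for all }Y\notin\mathcal{N}.
\]
Thus the measurable family $\big(\Pi_Y\otimes_{\tau(Y)}\P_{t=\tau(Y)}^{Y}\big)_{Y\in\Phi}$ coincides, off a $\P_t^{x,i,l}$-null set, with the r.c.p.d. $(\Q_t^{Y,\tau})_Y$; since any family agreeing a.e. with a r.c.p.d. is itself a r.c.p.d. of $\P_t^{x,i,l}(\,\cdot\,|\Psi_\tau)$, this is precisely the asserted strong Markov property.

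I expect the logical skeleton above to be routine once Lemma \ref{lem:null-set} is in hand; the two points requiring genuine care are the measurability of the solution map $(t',x',i',l')\mapsto\P_{t'}^{x',i',l'}$ (extracted from well-posedness exactly as in \cite{Stroock}, with the mild extra bookkeeping of the local-time coordinate $l(\cdot)$ and of the finite horizon $T$), and the verification that the concatenation $\Pi_Y\otimes_{\tau(Y)}\P_{t=\tau(Y)}^{Y}$ depends measurably on $Y$ and does satisfy the two defining identities of a r.c.p.d. (namely that it coincides with $Y$ on $[0,\tau(Y)]$ and that the conditional law of the post-$\tau$ evolution is the prescribed one). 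Both facts are classical in the $\R^d$ case and transfer to the star-shaped network without new conceptual difficulty, so the actual obstacle here is essentially technical and notational rather than substantive.
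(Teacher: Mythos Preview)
Your proposal is correct and follows essentially the same route as the paper: take a r.c.p.d.\ $(\Q_t^{Y,\tau})_Y$, apply Lemma~\ref{lem:null-set} to see that its post-$\tau$ shift solves $\big(\mathcal{S}_{pi}-\mathcal{M}_{ar}\big)$ from $Y(\tau)$, invoke uniqueness from Corollary~\ref{cr: exis Spider}, and then concatenate back with $\Pi_Y$ to obtain $\Q_t^{Y,\tau}=\Pi_Y\otimes_{\tau(Y)}\P_{t=\tau(Y)}^{Y}$ off a null set. The paper's proof is terser (it writes the identification directly via the concatenation identity $\Q^{Y,\tau}_{t}=\Pi_Y\otimes_{\tau(Y)}\big(\Pi_{X(Y)}\otimes_{\tau(Y)}\Q^{Y,\tau}_{t}\big)$ and skips the measurability discussion), but the logical skeleton is identical.
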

\begin{proof}
In the sequel, we denote $X(Y)=(x(Y),i(Y),l(Y))$, for any $Y$ belonging to the canonical space $\Phi$. We know that if $(\Q^{Y,\tau}_{t})_{Y\in \Phi}$ is a r.c.p.d. of $\P_{t}^{x,i,l}\pare{\,.\,| \Psi_\tau}$, then \[\Pi_{X(Y)} \otimes_{\tau(Y)} \Q^{Y,\tau}_{t} = \P_{t=\tau(Y)}^{X(Y)}\] for all $Y$ outside a $\P_{t}^{x_,i,l}$-null set $\mathcal{N}\in \Psi_\tau$. Thus,
\begin{align*}
\Q^{Y,\tau}_{t} &= \Pi_Y \otimes_{\tau(Y)}\pare{\Pi_{X(Y)} \otimes_{\tau(Y)} \Q^{Y,\tau}_{t}}\\
&=\Pi_Y \otimes_{\tau(Y)}\P_{t=\tau(Y)}^{X(Y)}\\
&=\Pi_Y \otimes_{\tau(Y)}\P_{t=\tau(Y)}^{x(\tau),i(\tau),l(\tau)}
\end{align*}
where the last equality follows from the uniqueness of the solution of the martingale problem \emph{$\big(\mathcal{S}_{pi}-\mathcal{M}_{ar}\big)$} and which holds for all $Y$ outside $\mathcal{N}$.
\end{proof}

\begin{Proposition}\label{pr: Markov fort}
The family $\{\P_{t}^{x,i,l} : t\in [0,T), (x,i,l)\in {\mathcal J}\times [0,+\infty)\}$  has the strong Markov property {\it i.e.} for any $(\Psi_u)$-stopping time $\tau$, each $(x,i,l)\in {\mathcal J}\times [0,+\infty)$ and any bounded measurable function $h\in {\mathcal C}({\mathcal J}\times [0,+\infty);\R)$, we have for any $u>s$:
\begin{align}
\label{eq:markov}
\E^{\P_{t}^{x,i,l} }\croc{\ind{\tau < \infty}h(x(u+\tau),i(u+\tau),l(u+\tau))| \Psi_\tau} = \ind{\tau <\infty}\E^{\P_{t=\tau}^{x(\tau),i(\tau),l(\tau)} }\croc{h(x(u),i(u),l(u))}
\end{align}
holding $\P_{t}^{x,i,l}$-a.s.
\end{Proposition}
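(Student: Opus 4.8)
The plan is to recognize Proposition~\ref{pr: Markov fort} as the conditional-expectation rewriting of the measure-theoretic strong Markov property already established in Lemma~\ref{lem: Markov}, so that no new estimate on the spider itself will be required: the statement will fall out of the defining property of a regular conditional probability distribution combined with the concatenation identity of Lemma~\ref{lem: Markov}. First I would fix the $(\Psi_u)$-stopping time $\tau$, the data $(x,i,l)\in\mathcal J\times[0,+\infty)$ and the initial time $t$, and let $(\Q_t^{Y,\tau})_{Y\in\Phi}$ be a regular conditional probability distribution of $\P_t^{x,i,l}(\,\cdot\,|\Psi_\tau)$; such an r.c.p.d.\ exists because $\Phi$ is Polish, as already invoked in Lemma~\ref{lem:null-set}. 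By definition of an r.c.p.d., $\E^{\P_t^{x,i,l}}[\,G\mid\Psi_\tau\,](Y)=\E^{\Q_t^{Y,\tau}}[G]$ for every bounded $\Psi_T$-measurable $G$ and $\P_t^{x,i,l}$-a.e.\ $Y$, while Lemma~\ref{lem: Markov} furnishes a $\P_t^{x,i,l}$-null set $\mathcal N\in\Psi_\tau$ off of which $\Q_t^{Y,\tau}=\Pi_Y\otimes_{\tau(Y)}\P_{t=\tau(Y)}^{x(\tau),i(\tau),l(\tau)}$ whenever $\tau(Y)<\infty$.

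Next I would specialize $G$ to the functional $\ind{\tau<\infty}\,h\big(x(u+\tau),i(u+\tau),l(u+\tau)\big)$ occurring on the left-hand side of \eqref{eq:markov} (with $u$ kept in the range for which $u+\tau\le T$, which is the range where the statement is meant), and evaluate $\E^{\Q_t^{Y,\tau}}[G]$ on $\{\tau(Y)<\infty\}\setminus\mathcal N$ by means of the concatenation identity. The elementary input here is that a path agreeing with $Y$ on $[0,\tau(Y)]$ has the same value of $\tau$ as $Y$ --- a property of $(\Psi_u)$-stopping times --- so that under $\Pi_Y\otimes_{\tau(Y)}\P_{t=\tau(Y)}^{x(\tau),i(\tau),l(\tau)}$ the shift by $\tau$ of the canonical process coincides with the post-$\tau(Y)$ portion of the path, whose law is exactly that of the canonical process after $\tau(Y)$ under $\P_{t=\tau(Y)}^{x(\tau(Y)),i(\tau(Y)),l(\tau(Y))}$. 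Feeding $h$ through this identification turns $\E^{\Q_t^{Y,\tau}}[G]$ into $\ind{\tau(Y)<\infty}\,\E^{\P_{t=\tau(Y)}^{x(\tau),i(\tau),l(\tau)}}[\,h(x(u),i(u),l(u))\,]$, which is the right-hand side of \eqref{eq:markov}. As the r.c.p.d.\ identity holds for $\P_t^{x,i,l}$-a.e.\ $Y$, this proves \eqref{eq:markov} for bounded continuous $h$, and a monotone-class / bounded-convergence argument then extends it to all bounded measurable $h$.

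The only point I expect to demand real care is the bookkeeping around the shift map and the concatenation operator $\otimes_{\tau(Y)}$: one has to verify that $\tau$ is genuinely left invariant by the time shift on (almost) every fibre, that composing the concatenation with the time shift reproduces the honest Markov evolution started from $(x(\tau),i(\tau),l(\tau))$ at time $\tau(Y)$, and that the time variable is tracked consistently between the two sides of \eqref{eq:markov} (in particular that the horizon constraint $u+\tau\le T$ is respected). This is the standard Stroock--Varadhan apparatus; the one place where a feature specific to our setting enters is the identification of the shifted r.c.p.d., which is precisely where Lemma~\ref{lem: Markov} uses the uniqueness half of the well-posedness Theorem~\ref{th: exis Spider}, so nothing further about the spider diffusion is needed.
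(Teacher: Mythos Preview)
Your proposal is correct and follows essentially the same route as the paper: both arguments take an r.c.p.d.\ of $\P_t^{x,i,l}(\,\cdot\,|\Psi_\tau)$, invoke the concatenation identity of Lemma~\ref{lem: Markov} (which rests on Lemma~\ref{lem:null-set} and the uniqueness in Theorem~\ref{th: exis Spider}), and then read off \eqref{eq:markov} by evaluating the conditional expectation on the shifted path. The only cosmetic difference is that the paper also re-invokes Lemma~\ref{lem:null-set} to identify $\hat{\Q}_t^{Y,\tau}$ directly, whereas you work straight from the concatenation formula; your added monotone-class remark is harmless but unnecessary since the stated $h$ is already continuous.
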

\begin{proof}
We make use of the notations introduced in the previous Lemma \ref{lem:null-set}. 
From the result of Lemma \ref{lem: Markov}, we have
\begin{align}
\label{eq:rcpd}
&\nonumber \E^{\P_{t}^{x,i,l} }\croc{\ind{\tau < \infty}h(x(u+\tau),i(u+\tau),l(u+\tau))| \Psi_\tau}\\
&\nonumber =\Q^{Y,\tau}_{t}\croc{\ind{\tau < \infty}h(x(u+\tau),i(u+\tau),l(u+\tau))}\nonumber\\
&=\hat{\Q}^{Y,\tau}_{t}\croc{\ind{\tau < \infty}h(x(u),i(u),l(u))}
\end{align}
which holds whenever $Y\notin {\mathcal N}\cup \{\tau = \infty\}$

From the result of Lemma \ref{lem:null-set} and from the uniqueness of the martingale problem $\big(\mathcal{S}_{pi}-\mathcal{M}_{ar}\big)$ we can use $\P_{t=\tau(Y)}^{Y(\tau)}$ for $\hat{\Q}^{Y,\tau}_{t}$ in \eqref{eq:rcpd}. But then, \eqref{eq:markov} follows directly since $$Y(\tau)= ((x(\tau(Y)), i(\tau(Y))),l(\tau(Y))) = (x(\tau), i(\tau), l(\tau)),$$ is ensured to hold for $\P_{t}^{x,i,l}$--a.s. every $Y\in \Phi$.
\end{proof}

\section{Problem 6 - On the instantaneous scattering distribution along some ray $\mathcal{R}_i$}\label{sec scatering property}
In this section, we give a characterization of the scattering distribution along some ray $\mathcal{R}_i$, as soon as the spider $(x,i)$ reaches the junction point $\bf 0$.  

When the spinning measure is constant, namely:
$$\forall (t,l)\in[0,T]\times[0+\infty),~~\alpha_i(t,l)=\alpha_i,$$
and the coefficients of diffusion are homogeneous: $b_i(t,x,l)=b_i(x),~\sigma_i(t,x,l)=\sigma_i(x)$, it was proved in the seminal work \cite{freidlinS} (see Corollary 2.4), that for any $\delta>0$ (small enough), if we introduce the following stopping time:
$$\theta^\delta:=\inf\big\{~s\ge 0,~~x(s)=\delta~\big\},$$
then:
\begin{eqnarray}\label{eq distr instant diffra}
\forall i\in [I],~~\lim_{\delta \searrow 0} \P_{\bf 0}\big(~i(\theta^\delta)=i~\big) =\alpha_i.  
\end{eqnarray}
The last convergence shows that as soon as the homogeneous spider process $(x,i)$ reaches the junction point $\bf 0$, the 'instantaneous' probability distribution for the process $(x,i)$ to be scattered along the ray $\mathcal{R}_i$ is exactly equal to $\alpha_i$. 

We will see that an analogous result remains available for the solution of the martingale problem $\big(\mathcal{S}_{pi}-\mathcal{M}_{ar}\big)$. More precisely, we have the following proposition:
\begin{Proposition}\label{pr: scattering with level l}
Assume assumption $(\mathcal{H})$. Let $t\in [0,T)$ and $\ell\in [0,+\infty)$. Assume that $(x,i)=\bf 0$. Let $\P_t^{\bf 0,\ell}$ be the solution of the $\big(\mathcal{S}_{pi}-\mathcal{M}_{ar}\big)$ martingale problem given in Corollary \ref{cr: exis Spider}, starting at time $t$ from the junction point $(x,i)=\bf 0$, with a local-time level equal to $\ell$. Then for any $\delta>0$, if we introduce the following stopping time:
$$\theta^\delta:=\inf\big\{~s\ge 0,~~x(s)=\delta~\big\},$$
we have:
\begin{eqnarray}\label{eq distr instant diffra temp local}
\forall i\in [I],~~\lim_{\delta \searrow 0} \P_t^{\bf 0,\ell}\big(~i(\theta^\delta)=i~\big) =\alpha_i(t,\ell).  
\end{eqnarray}
\end{Proposition}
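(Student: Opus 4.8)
The plan is to apply the martingale property of Corollary~\ref{cr: exis Spider} to a test function that, near the vertex, is $\mathcal L$--harmonic on each ray for the coefficients \emph{frozen} at the initial data $(t,0,\ell)$, equals $1$ at level $\delta$ on the ray $\mathcal R_i$ and $0$ at level $\delta$ on every other ray, and whose first order boundary contribution at $\bf 0$ matches the frozen Kirchhoff relation $\sum_j\alpha_j(t,\ell)\partial_xf_j(0)=0$. Optional sampling at $\theta^\delta\wedge T$ will then identify $\P_t^{\bf 0,\ell}\big(i(\theta^\delta)=i\big)$, up to errors that vanish as $\delta\searrow0$, with the value of this test function at $\bf 0$; the latter will converge to $\alpha_i(t,\ell)$.

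First I would construct the frozen scale functions. For $j\in[I]$ put $\bar\sigma_j:=\sigma_j(t,0,\ell)$, $\bar b_j:=b_j(t,0,\ell)$ and let $h_j(x):=\int_0^x\exp\!\big(-2\bar b_jy/\bar\sigma_j^2\big)\,dy$, the solution on $[0,\delta]$ of $\tfrac12\bar\sigma_j^2h_j''+\bar b_jh_j'=0$ with $h_j(0)=0$, $h_j'(0)=1$; then $h_j$ is increasing with $h_j',h_j''$ bounded on $[0,\delta]$ uniformly in $\delta\le1$, and $H_j(\delta):=h_j(\delta)$ satisfies $H_j(\delta)/\delta\to1$ as $\delta\searrow0$. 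Extend each $h_j$ to a bounded $\mathcal C^2$ map on $[0,+\infty)$, unchanged on $[0,\delta]$. Set
$$
c=c(\delta):=\frac{\alpha_i(t,\ell)/H_i(\delta)}{\sum_{j\in[I]}\alpha_j(t,\ell)/H_j(\delta)},\qquad a_j:=\frac{\ind{\{j=i\}}-c}{H_j(\delta)},\qquad \tilde f\big(u,(x,j),l\big):=a_jh_j(x)+c.
$$
By construction $\sum_j\alpha_j(t,\ell)a_j=0$ and $\tilde f_j(\cdot,0,\cdot)=c$ for every $j$, so $\tilde f\in\mathcal C^{1,2,1}_b(\mathcal J_T\times[0,+\infty))$, while $\tilde f_j(\cdot,\delta,\cdot)=\ind{\{j=i\}}$ and $c(\delta)\to\alpha_i(t,\ell)$ since $\sum_j\alpha_j(t,\ell)=1$. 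A key feature is that $c(\delta)$ stays bounded away from $0$ and $1$ whereas $H_j(\delta)\to0$, so $\max_j|a_j|=O(1/\delta)$.

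Next I would feed $\tilde f$ into the martingale of Corollary~\ref{cr: exis Spider} (its value at $s=t$ is $0$ since $x(t)=0$, $l(t)=\ell$) and stop at the bounded stopping time $\theta^\delta\wedge T$. As $\partial_t\tilde f=\partial_l\tilde f=0$ and $\partial_x\tilde f_j(u,0,l)=a_j$, optional sampling gives
$$
\E^{\P_t^{\bf 0,\ell}}\!\big[\tilde f_{i(\theta^\delta\wedge T)}(x(\theta^\delta\wedge T))\big]=c+\E\!\int_t^{\theta^\delta\wedge T}\!\!\mathcal L^{u,l(u)}_{i(u)}\tilde f_{i(u)}(x(u))\,du+\E\!\int_t^{\theta^\delta\wedge T}\!\sum_{j}\big(\alpha_j(u,l(u))-\alpha_j(t,\ell)\big)a_j\,dl(u),
$$
with $\mathcal L^{u,l}_jg(x)=b_j(u,x,l)g'(x)+\tfrac12\sigma_j^2(u,x,l)g''(x)$ as in Theorem~\ref{th It\^o's formula}. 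Because $h_j$ solves the frozen equation while $\mathcal L^{u,l}_j$ carries the true coefficients, the regularity part $({\bf R})$ of Assumption $(\mathcal H)$ yields $\big|\mathcal L^{u,l(u)}_{i(u)}\tilde f_{i(u)}(x(u))\big|\le C\max_j|a_j|\,(|u-t|+x(u)+|l(u)-\ell|)$ on $[t,\theta^\delta\wedge T]$, and similarly $\big|\sum_j(\alpha_j(u,l(u))-\alpha_j(t,\ell))a_j\big|\le C\max_j|a_j|\,(|u-t|+|l(u)-\ell|)$, with $C$ depending only on the data of $(\mathcal H)$.

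To control these errors I would prove the scaling estimates $\E[\theta^\delta\wedge T-t]=O(\delta^2)$ and $\E[(l(\theta^\delta\wedge T)-\ell)^2]=O(\delta^2)$. The first follows from Itô's formula (Theorem~\ref{th It\^o's formula}) applied to $x\mapsto x^2$: the local time term drops since $l$ increases only on $\{x=0\}$, so $\E[x(s\wedge\theta^\delta)^2]=\E\int_t^{s\wedge\theta^\delta}\!\big(2x(u)b_{i(u)}(\cdots)+\sigma_{i(u)}^2(\cdots)\big)du\ge\tfrac12\underline{\sigma}^2\,\E[s\wedge\theta^\delta-t]$ for $\delta$ small, while the left side is $\le\delta^2$; letting $s\uparrow T$ gives the bound, together with $\P_t^{\bf 0,\ell}(\theta^\delta\ge T)\le\E[\theta^\delta\wedge T-t]/(T-t)=O(\delta^2)$ and $\E[(\theta^\delta\wedge T-t)^2]\le(T-t)\,O(\delta^2)$. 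The second estimate comes from the dynamics \eqref{eq : diff x} of $x$ between $t$ and $\theta^\delta\wedge T$. Inserting these (with Cauchy--Schwarz) bounds each error term by $C\max_j|a_j|\cdot O(\delta^2)=O(\delta)$, hence $\E^{\P_t^{\bf 0,\ell}}[\tilde f_{i(\theta^\delta\wedge T)}(x(\theta^\delta\wedge T))]=c+o(1)$; on $\{\theta^\delta<T\}$ path continuity forces $x(\theta^\delta)=\delta$, so $\tilde f_{i(\theta^\delta)}(x(\theta^\delta))=\ind{\{i(\theta^\delta)=i\}}$, whereas on $\{\theta^\delta\ge T\}$ one has $x(T)\le\delta$, so $|\tilde f_{i(T)}(x(T))|\le\max_j|a_j|\,H_{i(T)}(\delta)+|c|\le C$ and this contributes $O(\P_t^{\bf 0,\ell}(\theta^\delta\ge T))=o(1)$. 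Thus $c(\delta)=\P_t^{\bf 0,\ell}\big(i(\theta^\delta)=i\big)+o(1)$ and letting $\delta\searrow0$, using $c(\delta)\to\alpha_i(t,\ell)$, gives the claim. The main obstacle is exactly this last bookkeeping: $\tilde f$ unavoidably has normal derivatives of order $1/\delta$ at $\bf 0$, so one truly needs the quadratic ($O(\delta^2)$) control of both $\theta^\delta\wedge T-t$ and $l(\theta^\delta\wedge T)-\ell$ for the errors to be $o(1)$, which is where ellipticity $({\bf E})$ and the non-stickiness of $x$ at $\bf 0$ are used decisively; everything else is a frozen-coefficient computation.
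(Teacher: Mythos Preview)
Your proof is correct, but it takes a different route from the paper. The paper avoids constructing harmonic test functions altogether: it simply applies the martingale property to the three elementary functions $(x,j)\mapsto x$, $(x,j)\mapsto x^2$, and $(x,j)\mapsto x\,\mathbf{1}_{\{x\in\mathcal R_i\}}$. From the first it extracts $\E[l(\theta^\delta)-\ell]=\delta+O(\delta^2)$; from the second (via ellipticity) the same key estimate $\E[\theta^\delta-t]=O(\delta^2)$ that you also use; and from the third it gets directly
\[
\delta\,\P_t^{\bf 0,\ell}\big(i(\theta^\delta)=i\big)=\E\!\int_t^{\theta^\delta}\!\alpha_i(s,l(s))\,dl(s)+O(\delta^2),
\]
so that dividing by $\delta$ and using the Lipschitz regularity of $\alpha_i$ together with the modulus--of--continuity bound on $l$ yields the limit $\alpha_i(t,\ell)$.

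Your approach instead engineers a single test function $\tilde f$ that is $\mathcal L$-harmonic for the \emph{frozen} coefficients and satisfies the frozen Kirchhoff relation, so that its value at $\bf 0$ is exactly $c(\delta)\to\alpha_i(t,\ell)$ and optional sampling produces $\P_t^{\bf 0,\ell}(i(\theta^\delta)=i)$ directly. The price is that $\partial_x\tilde f_j(0)=a_j$ blows up like $1/\delta$, which forces you to obtain the \emph{quadratic} control $\E[(l(\theta^\delta\wedge T)-\ell)^2]=O(\delta^2)$ (you do this cleanly from the SDE for $x$, whereas the paper invokes the companion paper's Lemma~4.4). What you gain is a more self-contained argument that handles the event $\{\theta^\delta\ge T\}$ explicitly (the paper is tacit on this point) and that would adapt readily to more general boundary operators. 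What the paper gains is brevity: no scale functions, no bookkeeping of $O(1/\delta)$ derivatives, just three one-line applications of the martingale property.
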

\begin{proof}
It follows first from the martingale property applied to $(x,i)\mapsto x$ that:
\begin{eqnarray}\label{ eq p1}
\delta=\E^{\P_t^{\bf 0,\ell}}\big[\ds \int_{t}^{\theta^\delta}b_{i(s)}(s,x(s),l(s))ds\big]+\E^{\P_t^{\bf 0,\ell}}\big[l(\theta^\delta)-\ell\big].
\end{eqnarray}
On the other hand, using the ellipticity condition given in assumption $(\mathcal{H})-({\bf E})$), and the map $(x,i)\mapsto x^2$, we get also that:
\begin{eqnarray}\label{ eq p2}
\nonumber & \delta^2-2\E^{\P_t^{\bf 0,\ell}}\big[\ds \int_{t}^{\theta^\delta}b_{i(s)}(s,x(s),l(s))x(s)ds\big]=\E^{\P_t^{\bf 0,\ell}}\big[\ds \int_{t}^{\theta^\delta}\sigma^2_{i(s)}(s,x(s),l(s))ds\big]\\
&\ge \underline{\sigma}^2 \E^{\P_t^{\bf 0,\ell}}[\theta^\delta-t].
\end{eqnarray}
Fix now $i\in [I]$ and define the following map $f$ by:
\begin{eqnarray*}
f:=\begin{cases}
    \mathcal{J}\to \R.\\
    (x,j)\mapsto x\mathbf{1}_{\{x\in \mathcal{R}_i\}}
\end{cases}.
\end{eqnarray*}
Once again, the martingale property applied to $f$ leads to:
\begin{eqnarray}\label{ eq p3}
\nonumber & \delta\P_t^{\bf 0,\ell}\big(~i(\theta^\delta)=i~\big)=\E^{\P_t^{\bf 0,\ell}}[\delta \mathbf{1}_{\{x(\theta^\delta)\in \mathcal{R}_i\}}]=\E^{\P_t^{\bf 0,\ell}}\big[\ds \int_{t}^{\theta^\delta}b_{i(s)}(s,x(s),l(s))\mathbf{1}_{\{x(s)\in \mathcal{R}_i\}}ds\big]+\\
&\E^{\P_t^{\bf 0,\ell}}\big[\ds \int_{t}^{\theta^\delta}\alpha_i(s,l(s))dl(s)\big].
\end{eqnarray}
Hence we obtain:
\begin{eqnarray}\label{ eq p4}
\nonumber & \P_t^{\bf 0,\ell}\big(~i(\theta^\delta)=i~\big)=\ds\frac{1}{\delta}\Big(\E^{\P_t^{\bf 0,\ell}}\big[\ds \int_{t}^{\theta^\delta}b_{i(s)}(s,x(s),l(s))\mathbf{1}_{\{x(s)\in \mathcal{R}_i\}}ds\big]\Big)+\\
&\ds\frac{1}{\delta}\big(\E^{\P_t^{\bf 0,\ell}}\big[\ds \int_{t}^{\theta^\delta}(\alpha_i(s,l(s))-\alpha_i(t,\ell))dl(s)\big]\Big)+\alpha_i(t,\ell)\ds\frac{1}{\delta}\E^{\P_t^{\bf 0,\ell}}[l(\theta^\delta)-\ell].
\end{eqnarray}
The 'non-stickiness' estimate given in Proposition 5.2 in \cite{Martinez-Ohavi Walsh}, recalled in \eqref{eq : maj zero x} leads to obtain:
$$\big|~\E^{\P_t^{\bf 0,\ell}}\big[\int_{t}^{\theta^\delta}b_{i(s)}(s,x(s),l(s))x(s)ds\big]~\big|\leq M\delta^2,$$
for a uniform constant $M>0$. From \eqref{ eq p2}, we get then that there exists a uniform constant $C>0$, such that:
\begin{eqnarray}\label{estimee centrale}
C\delta^2\ge \E^{\P_t^{\bf 0,\ell}}[\theta^\delta-t]\ge 0.    
\end{eqnarray}
Therefore with the aid of \eqref{estimee centrale}, we obtain easily in \eqref{ eq p1} and the first line of \eqref{ eq p4} that:
$$\lim_{\delta \searrow 0}\frac{1}{\delta}\E^{\P_t^{\bf 0,\ell}}[l(\theta^\delta)-\ell]=1, ~~\lim_{\delta \searrow 0}\ds\frac{1}{\delta}\Big|~\E^{\P_t^{\bf 0,\ell}}\big[\ds \int_{t}^{\theta^\delta}b_{i(s)}(s,x(s),l(s))\mathbf{1}_{\{x(s)\in \mathcal{R}_i\}}ds\big]~\Big|=0.$$
The Lipschitz regularity of the diffraction terms $\alpha_i$ - assumption $(\mathcal{H})-({\bf R})$ implies that (where $C>0$ is a uniform constant):
$$\frac{1}{\delta}\big|\E^{\P_t^{\bf 0,\ell}}\big[\ds \int_{t}^{\theta^\delta}(\alpha_i(s,l(s))-\alpha_i(t,\ell))dl(s)\big]\Big|\leq \frac{C}{\delta}\big(\E^{\P_t^{\bf 0,\ell}}\big[(l(\theta^\delta)-\ell)^2+(l(\theta^\delta)-\ell)(\theta^\delta-t)\big]\big).$$
Conditioning
with respect to the random variable $(\theta^\delta-t)$ in the last expectation, we conclude that the last term converges to $0$ as soon as $\delta\searrow 0$, using once again \eqref{estimee centrale} and the results obtained for the modulus of continuity of the local time given in Lemma 4.4 of \cite{Martinez-Ohavi Walsh}. We obtain therefore the required result sending $\delta\searrow 0$ in \eqref{ eq p4}.
\end{proof}
The last Proposition \ref{pr: scattering with level l} allows us to state that as soon as the spider process $(x,i)$ reaches the junction point $\bf 0$ at time $t$, with a level of local time $\ell$, the 'instantaneous' probability distribution for $(x,i)$ to be scattered along the ray $\mathcal{R}_i$ is then equal to the corresponding spinning coefficient $\alpha_i(t,\ell)$.

\medskip


\begin{thebibliography}{abc99xyz}
\bibitem{Atar}{R.Atar and A.Cohen}. {Serve the shortest queue and {W}alsh {B}rownian motion}, {The Annals of Applied Probability},
 {29},{1}, pp. {613--651} {2019}.
\bibitem{Barlow-Pitman-Yor}M.Barlow, J.Pitman, and M.Yor. On {W}alsh's {B}rownian motions.
{S\'{e}minaire de {P}robabilit\'{e}s, {XXIII}},
{Lecture Notes in Math.},{1372},{275--293}, {Springer, Berlin}, {1989}.
\bibitem{Barlow-Burdzy}{M.Barlow, K.Burdzy, H.Kaspi, and A.Mandelbaum}. {Variably skewed {B}rownian motion}, {Electronic Communications in Probability}, {5}, pp. {57--66}, 2000. 
\bibitem{Bayraktar}E.Bayraktar and X.Zhang. {Embedding of {W}alsh {B}rownian motion}, {Stochastic Processes and their Applications}; {134}, p.1-28, {2021}.
\bibitem{Bayraktar-2}E.Bayraktar, J.Zhang and X.Zhang. {Walsh diffusions as time changed multi-parameter process}, preprint https://arxiv.org/abs/2204.07101.7
\bibitem{Clark} Clark N.A, Lunacek J.H, Benedek, G.B. A study of Brownian motion using light scattering, American Journal of Physics, Vol. 38, number 5, (1970).
\bibitem{freidlinS}M.Freidlin and S-J.Sheu. Diffusion processes on graphs: stochastic differential equations, large deviation principle. Probability Theory and Related Fields, 116(2), pp. 181-220, 2000.
\bibitem{Freidlin-Wentzell-2} {M.Freidlin and A.D.Wentzell}. {Diffusion processes on graphs and the averaging principle}, {Ann. Probab.}, {The Annals of Probability}, {21}, {1993}.
\bibitem{Fournier-Printems}{N.Fournier and J.Printems}.
{Absolute continuity for some one-dimensional processes}. {Bernoulli},{16}, {2}, {343--360}, (2010). {https://doi.org/10.3150/09-BEJ215}.
\bibitem{Ichiba} T.Ichiba, I.Karatzas, V.Prokaj and M. Yan. Stochastic integral equations for {W}alsh semimartingales, {Annales de l'Institut Henri Poincar\'{e} Probabilit\'{e}s et Statistiques}, {54}, {2018}.
\bibitem{Ichiba-2}{T.Ichiba and A.Sarantsev}. {Stationary distributions and convergence for {W}alsh diffusions}, {Bernoulli. Official Journal of the Bernoulli Society for Mathematical Statistics and Probability}, {25}, {2019}.
\bibitem{Karatzas-Yan}I.Karatzas and M.Yan. {Semimartingales on rays, {W}alsh diffusions, and related problems of control and stopping}, {Stochastic Processes and their Applications}, {129}, {2019}.
\bibitem{Karatzas Book}I.Karatzas and S.E.Shreve. Brownian Motion and Stochastic Calculus, volume 113 of
Graduate Texts in Mathematics. Springer-Verlag, New York, 2nd edition, 1991.
\bibitem{Lejay} A.Lejay. On the constructions of the skew Brownian motion.Probab. Surveys 3(none): 413-466, 2006.
\bibitem{Lions Souganidis 1} P.L.Lions and P.Souganidis. Viscosity solutions for junctions: well posedness and stability. Rend. Lincei Mat. Appl. (2016). 27. 
\bibitem{Lions Souganidis 2} P.L.Lions and P.Souganidis. Well-posedness for multi-dimensional junction problems with Kirchhoff-type conditions. Rend. Lincei Mat. Appl. (2017), 28.
\bibitem{Martinez-Ohavi Walsh} M.Martinez and I.Ohavi. Martingale problem for a Walsh spider process with spinning measure selected from its own local-time. Accepted, to appear in Electronic Journal of Probability.
\bibitem{Martinez-Ohavi EDP} M.Martinez and I.Ohavi. Well posedness of linear Parabolic partial differential equation posed on a star-shaped network with local-time Kirchhoff's boundary condition at the vertex. Journal of Mathematical Analysis and Applications, 537 (2), 2024.
\bibitem{Ohavivisco} Ohavi I. Comparison principle for Walsh’s spider HJB equations with non linear
local time kirchhoff’s boundary transmission. Journal of Mathematical Analysis and Applications, 547 (2), 2025.
\bibitem{Stroock}
D.W.Stroock, S.R.S Varadhan. Multidimensional Diffusion Processes, 2004.
\bibitem{Revuz-Yor} D.Revuz and M.Yor. Continuous martingales and Brownian motion. 3rd Edition. Springer, 1999.
\bibitem{Salisbury}{T.S.Salisbury}. {Construction of right processes from excursions}, {Probability Theory and Related Fields}, {73}, {1986}.
\end{thebibliography}
\end{document}